\newcolumntype{P}[1]{>{\centering\arraybackslash}p{#1}}
\newtheorem*{rep@theorem}{\rep@title}
\newcommand{\newreptheorem}[2]{%
\newenvironment{rep#1}[1]{%
 \def\rep@title{#2 \ref{##1}}%
 \begin{rep@theorem}}%
 {\end{rep@theorem}}}
\newtheorem{theorem}{Theorem}
\newtheorem{lemma}[theorem]{Lemma}
\newtheorem{cor}[theorem]{Corollary}
\newtheorem{prop}[theorem]{Proposition}
\newtheorem{conj}[theorem]{Conjecture}
\newtheorem*{repeattheorem}{Theorem \ref{thm2adjknots}}
\theoremstyle{definition}
\newtheorem{mydef}[theorem]{Definition}
\theoremstyle{remark}
\numberwithin{theorem}{section}
\title{Constructing and Cataloging 2-Adjacent Knots}
\author{John Carney}
\address{Department of Mathematics \& Applied Mathematics, Virginia Commonwealth University, 1015 Floyd Avenue, Box 842014, Richmond, VA 23284-2014, USA}
\email{carneyjr@vcu.edu}
\author{Everett Meike}
\address{Department of Mathematics, North Carolina State University, Raleigh, NC 27607, USA}
\email{emeike@ncsu.edu}
\date{}
\begin{document}

\begin{abstract}
Generalizing unknotting number, $n$-adjacent knots have $n$ crossings such that changing any non-empty subset of them results in the unknot. In this paper, we determine the $2$-adjacent knots through $12$ crossings. Using Heegaard Floer $d$-invariants and the Alexander polynomial, we develop a new technique to obstruct $2$-adjacency, and we prove conjectures of Ito \cite{Ito} and Kato \cite{GoKato} regarding $2$-adjacent knots.     
\end{abstract}

\maketitle

\section{Introduction}

    A knot $K$ that is $\bm{n}$\textbf{-adjacent} to another knot $K'$ has a diagram with $n$ crossings such that changing any non-empty subset of them results in a diagram of $K'$. For simplicity, we say that $K$ is $n$-adjacent if $K'$ is the unknot, as in \cite{Tao1}.

In \cite{AK}, Askitas and Kalfagianni studied $n$-adjacency for $n>2$. They found that all $n$-adjacent knots have trivial Conway/Alexander polynomials when $n>2$. They also produced a construction method that describes all $n$-adjacent knots for $n>2$. Neither result holds for $2$-adjacent knots. In this paper, we explore how and why $2$-adjacent knots differ from other $n$-adjacent knots and completely catalog $2$-adjacent knots with $12$ or fewer crossings. 

Askitas and Kalfagianni showed that for $n>2$, there are no non-trivial alternating or fibered $n$-adjacent knots. This is not true for $2$-adjacent knots. The two smallest non-trivial knots, $3_1$ and $4_1$, are both $2$-adjacent, alternating, and fibered. For $n$ greater than 2, $n$-adjacent knots have the property that all their Vassiliev invariants of degree less than $2n-1$ vanish \cite{AK}. The same is not true for $2$-adjacent knots. 

Much of the previously published research on $2$-adjacent knots was completed by Tao \cite{Tao1}, who found restrictions on the Conway, Jones, and HOMFLY-PT polynomials of $2$-adjacent knots. In unpublished work, Ito's list of $2$-adjacent knots with 12 or fewer crossings \cite{Ito} coincides with ours, and Kato \cite{GoKato} tried to exclude all other knots from the list. We have independently verified and corrected elements from their papers to create a complete list of $2$-adjacent knots with $12$ pr less crossings.

\begin{theorem} \label{thm2adjknots}
    The following knots are $2$-adjacent: $3_1$, $4_1$, $8_{17}$, $8_{21}$, $9_{44}$, $10_{88}$, $10_{136}$, $10_{156}$, $11a_{289}$, $11n_{84}$, $11n_{125}$, $12a_{1008}$, $12a_{1249}$, $12n_{275}$, $12n_{392}$, $12n_{464}$, $12n_{482}$, $12n_{483}$, $12n_{650}$, and $12n_{831}$. No other knots with $12$ crossings or less are $2$-adjacent.
\end{theorem}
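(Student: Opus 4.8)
The plan is to prove Theorem~\ref{thm2adjknots} in two halves: a \emph{realization} direction (each listed knot really is $2$-adjacent) and an \emph{obstruction} direction (no other knot through $12$ crossings is). For the realization direction I would, for each of the twenty knots in the list, exhibit an explicit diagram together with two marked crossings $c_1,c_2$ and then verify the three required unknottings: changing $c_1$ alone, changing $c_2$ alone, and changing both. In practice one produces these diagrams by starting from a known unknotting-number-one presentation, or by a band/rational-tangle construction that builds a $2$-adjacent knot with prescribed properties (this is where the ``constructing'' half of the title enters, and presumably an earlier section gives such a construction scheme); the verification that each of the three resulting diagrams is the unknot is a routine but finite check via Reidemeister moves or a computer algebra package such as SnapPy/KnotTheory. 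I expect this direction to be essentially bookkeeping once the right diagrams are in hand.

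The obstruction direction is the substantial part and I would organize it as a sieve over the roughly $2977$ knots with at most $12$ crossings. First apply the classical polynomial constraints: Tao's restrictions on the Conway, Jones, and HOMFLY-PT polynomials of $2$-adjacent knots, together with the fact that a $2$-adjacent knot has unknotting number at most $1$ (changing either marked crossing alone unknots it). The bound $u(K)\le 1$ already eliminates the overwhelming majority of the table, and the signature/Tristram--Levine and polynomial tests dispatch many more. For the knots that survive these cheap filters I would invoke the paper's new tool: the combination of Heegaard Floer $d$-invariants (of the double branched cover, or of surgeries, following the Montesinos-type correspondence between crossing changes and knot surgery) with an Alexander-polynomial divisibility condition. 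Concretely, $2$-adjacency forces a strong relation between $K$ and the intermediate knot obtained by changing just one crossing, and the associated branched covers must bound certain negative-definite or $d$-invariant-constrained fillings; checking this fails for each remaining candidate closes the case.

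The main obstacle, and the place where the argument is genuinely new rather than a table lookup, is the handful of ``hard'' knots that pass every classical test --- unknotting number one, compatible Conway/Jones/HOMFLY-PT polynomials, compatible signature --- yet are not $2$-adjacent. These are exactly the knots that motivated Ito's and Kato's attempts, and the $d$-invariant technique has to be sharp enough to separate them; I would expect to spend most of the effort setting up the precise $d$-invariant inequality (including getting the orientation and sign conventions on the branched double cover right, since $2$-adjacency constrains \emph{both} signs of crossing change simultaneously) and then verifying it, knot by knot, for this short residual list. A secondary subtlety is completeness: one must be careful that the sieve is applied to \emph{every} knot in the census and that mirror images and the two crossing signs are handled, so that the final ``no other knots'' claim is airtight; I would close with an explicit enumeration of which filter eliminates each knot or class of knots, with the residual hard cases handled individually by the Heegaard Floer obstruction.
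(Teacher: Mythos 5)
Your overall architecture matches the paper's: realization via explicit diagrams coming from a tangle/finger-move construction (the paper's Section on Seifert surfaces and the appendix figures), and an obstruction sieve using unknotting number, signature, Tao's Conway/HOMFLY-PT restrictions, and finally the new Heegaard Floer $d$-invariant obstruction for the residual hard knots. The paper additionally uses two filters you do not name --- the determinant condition $\det(K)=4\omega^2\pm1$ from Theorem \ref{thm-newobst}, which does much of the heavy lifting in the first pass, and Corollary \ref{cor-mccoy} (via McCoy) for several alternating candidates --- but at the level of a plan these are interchangeable with ``classical polynomial and signature tests.''

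There are, however, two concrete gaps in how you treat the residual cases, and they are precisely where the paper's new content lives. First, when you apply the Montesinos-type correspondence to a surviving knot $K$, the knot $J$ you can actually compute (the lift of an arc for some \emph{known} unknotting crossing) need not arise from a crossing in a putative $2$-adjacency set; the obstruction of Theorem \ref{thm-newobst} applies to the lift $J'$ of an arc for a crossing in the adjacency set, which is not in your hands. The paper closes this by using the Ni--Wu formula to compute the $d$-invariants of $\Sigma(K)=S^3_{d/2}(J)$ from $\Delta_J$, and then enumerating \emph{all} Alexander polynomials $\Delta_{J'}$ compatible with those $d$-invariants (checking possible permutations of the $V_i$ coming from $\operatorname{Spin}^c$ structures that agree mod $2$) before evaluating each at the relevant roots of unity; ``checking this fails for each remaining candidate'' silently assumes $J'=J$, which is not justified. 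Second, your sieve leans on $u(K)\le 1$ and on finding an explicit unknotting crossing, but $12n_{586}$ has unknown unknotting number ($1$ or $2$), so neither step is available for it. The paper handles it with a genuinely different maneuver: a three-half-twist rational tangle replacement gives $\Sigma(12n_{586})$ as a $d/3$-surgery, from which the $d$-invariants are computed, and then one works backwards through Ni--Wu to determine what $\Delta_{J'}$ a hypothetical half-integral surgery knot would have to have, and rules that out via Theorem \ref{thm-newobst}. Without these two ingredients the obstruction half of your argument does not close for the five knots $11a_{255}$, $12a_{358}$, $12n_{620}$, $12n_{656}$, and especially $12n_{586}$.
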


We use existing obstructions to rule out most of the non-$2$-adjacent knots with up to 12 crossings. We also prove Theorem \ref{thm-newobst} and Corollary \ref{cor-mccoy} as new obstructions to $2$-adjacency. However, these were not sufficient for five knots: $11a_{255},12a_{358},12n_{620}, 12n_{656},$ and $12n_{586}$. Building on results of Baker and Motegi \cite{bakermotegi}, we developed a new method combining the Montesinos trick with the Alexander polynomial and Heegaard Floer $d$-invariants to exclude $11a_{255},12a_{358},12n_{620}$, $12n_{656}$, and $12n_{586}$ from being $2$-adjacent. We prove the following obstruction in Section \ref{torres}, with notation defined in Section \ref{sec:dbc-montesinos}.

\begin{theorem}\label{thm-newobst}
    If $K$ is a $2$-adjacent knot, then $\det(K)=4\omega^2\pm1$ for some $\omega\in\mathbb{Z}$. If the crossings in the $2$-adjacency set are of opposite sign, then $\det(K)=4\omega^2+1$, and if they are of the same sign, then $\det(K)=4\omega^2-1$.
    Furthermore, let $\gamma_K$ be a crossing arc for $c$ in the $2$-adjacency set of $K$, and $\gamma_{\mathcal{U}}$ be the image of $\gamma_K$ after performing a crossing change at $c$. If $\omega\neq0$, and $J$ is the lift of $\gamma_{\mathcal{U}}$ in $\Sigma(\mathcal{U})=S^3$, then $\Delta_J(z)=1$ for all $z=e^{\frac{(2\ell+1)\pi i}{\omega}}$, where $\ell\in \mathbb{Z}$.
\end{theorem}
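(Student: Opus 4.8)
The plan is to run the Montesinos trick — in the form set up in Section~\ref{sec:dbc-montesinos} following Baker--Motegi \cite{bakermotegi} — to turn the $2$-adjacency data into a Dehn-surgery description of the branched double cover $\Sigma(K)$, and then read off the determinant from a linking-matrix computation and the Alexander-polynomial statement from a Torres-type surgery formula. Fix a $2$-adjacency diagram of $K$ with crossings $c_1,c_2$, and for $S\subseteq\{c_1,c_2\}$ let $K_S$ be the diagram obtained by changing exactly the crossings in $S$, so that $K_{\{c_1\}}$, $K_{\{c_2\}}$ and $K_{\{c_1,c_2\}}$ are all the unknot $\mathcal U$. Passing to branched double covers, the Montesinos trick realises a crossing change at $c_i$ by a half-integral (``distance two'') Dehn surgery on the lift of a crossing arc for $c_i$. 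Working in $\Sigma(K_{\{c_1,c_2\}})=\Sigma(\mathcal U)=S^3$, let $L_i$ be the lift of the $c_i$-arc and $p_i/2$ the slope that undoes the change at $c_i$, so that $\Sigma(K_S)$ is recovered from $S^3$ by $p_i/2$-surgery on the components $L_i$ with $c_i\notin S$; in particular $\Sigma(K)=\Sigma(K_\emptyset)$ comes from $(p_1/2,p_2/2)$-surgery on $L_1\cup L_2$. Since undoing $c_i$ alone produces $\Sigma(K_{\{c_1,c_2\}\setminus\{c_i\}})=S^3$ by the non-trivial (half-integral) surgery $p_i/2$ on $L_i$, the Gordon--Luecke theorem forces $L_i$ to be an unknot and $p_i=\pm1$; write $\varepsilon_i:=p_i$, and note that the sign of $\varepsilon_i$ records the sign of the crossing $c_i$. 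Thus $\Sigma(K)$ is obtained by $(\varepsilon_1/2,\varepsilon_2/2)$-surgery on a two-component link $L_1\cup L_2$ of unknots, and I set $\omega:=|\operatorname{lk}(L_1,L_2)|$.

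For the determinant I would absorb $L_1$ by a Rolfsen twist: the $\varepsilon_1/2$-surgery on the unknot $L_1$ returns $S^3$, sends $L_2$ to a knot $J$ — which is precisely the lift of $\gamma_{\mathcal U}$ in $\Sigma(K_{\{c_2\}})=S^3$, i.e.\ the knot named in the theorem for the choice $c=c_2$ — and converts the slope $\varepsilon_2/2$ on $L_2$ into $(\varepsilon_2-4\varepsilon_1\omega^2)/2$ on $J$. Hence $\Sigma(K)=S^3_{(\varepsilon_2-4\varepsilon_1\omega^2)/2}(J)$, so $\det(K)=|H_1(\Sigma(K))|=|\varepsilon_2-4\varepsilon_1\omega^2|=|4\omega^2-\varepsilon_1\varepsilon_2|$, which is $4\omega^2+1$ when $\varepsilon_1\varepsilon_2=-1$ (the crossings have opposite signs) and $4\omega^2-1$ when $\varepsilon_1\varepsilon_2=1$ (the same signs); when $\omega=0$ both read $\det(K)=1=\det(\mathcal U)$.

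For the last assertion, assume $\omega\neq0$; by the symmetry between $c_1$ and $c_2$ it suffices to treat $c=c_2$, so $J$ is the knot just produced. Apply multiplicativity of Milnor--Turaev torsion to the decomposition $S^3\setminus\nu(J)=\bigl(S^3\setminus\nu(L_1\cup L_2)\bigr)\cup_{\partial\nu(L_1)}V$, where $V$ is the $\varepsilon_1/2$-filling solid torus. The torsion of a two-component link exterior is $\doteq\Delta_{L_1\cup L_2}(t_1,t_2)$, that of a knot exterior is $\doteq\Delta_J(t)/(t-1)$, the $\varepsilon_1/2$-surgery on $L_1$ identifies $\mu_{L_1}$ with $t^{-2\varepsilon_1\omega}$ (where $t=\mu_J=\mu_{L_2}$ and $\lambda_{L_1}$ carries $t^{\omega}$), and the core of $V$ also carries $t^{\omega}$; hence
\[
\Delta_J(t)\;\doteq\;\frac{(t-1)\,\Delta_{L_1\cup L_2}\!\bigl(t^{-2\varepsilon_1\omega},\,t\bigr)}{\,t^{\omega}-1\,}.
\]
Finally evaluate at $z=e^{(2\ell+1)\pi i/\omega}$: there $z^{2\omega}=1$, so $z^{-2\varepsilon_1\omega}=1$, while $z^{\omega}=-1\neq1$ and $z\neq1$. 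The Torres condition for $L_1\cup L_2$, together with $L_2$ being unknotted, gives $\Delta_{L_1\cup L_2}(1,z)\doteq(z^{\omega}-1)/(z-1)$, and substituting into the displayed identity makes everything cancel to $\Delta_J(z)\doteq1$; fixing the standard symmetric normalisation of $\Delta_J$ (and using that $\Delta_J(z)$ is real for $|z|=1$) upgrades this to $\Delta_J(z)=1$.

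The step I expect to be the real obstacle is the first one — establishing the surgery picture precisely — since it requires verifying that a crossing change lifts to a \emph{half-integral} surgery on the crossing-arc lift (the ``distance two'' claim, with all framing conventions pinned down) and that the sign $\varepsilon_i$ is governed exactly by the sign of $c_i$. This is the content of the Montesinos-trick package of Section~\ref{sec:dbc-montesinos} together with \cite{bakermotegi}; granted that, the determinant is bookkeeping with the linking matrix and the Alexander-polynomial identity is a clean torsion computation. A secondary subtlety is the degenerate (root-of-unity) evaluation of the torsion surgery formula and the normalisation needed to pass from ``$\doteq1$'' to ``$=1$''.
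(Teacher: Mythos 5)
Your proposal is correct and follows essentially the same route as the paper: the Montesinos trick turns the $2$-adjacency into $(\pm\tfrac12,\pm\tfrac12)$-surgery on a two-component link of unknots (Gordon--Luecke forcing the components to be trivial and the slopes to be $\pm\tfrac12$, with signs governed by the crossing signs via the signed Montesinos statement), the determinant is read off as $|H_1|$ of that surgery, and the Alexander-polynomial condition comes from evaluating a Torres-type identity at odd $(2\omega)^{\text{th}}$ roots of unity. The only differences are cosmetic: you compute $\det(K)$ by a Rolfsen twist on one component rather than via the $2\times 2$ linking matrix, and you re-derive the key identity $\frac{t^{\omega}-1}{t-1}\,\Delta_J(t)\doteq\Delta_{L_1\cup L_2}\bigl(t,t^{\pm 2\omega}\bigr)$ by torsion multiplicativity instead of quoting Baker--Motegi's twist-family formula, which is what the paper cites.
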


In Section \ref{sectionbackground}, we present some background information that will be necessary to prove our results. We prove Theorem \ref{thm-newobst}, a new obstruction to $2$-adjacency, using Ozsv\'{a}th and Szab\'{o}'s \textit{correction terms} (\textit{$d$-invariants}) \cite{absolutelygraded} and the double cover of $S^3$ branched over a knot in Section \ref{torres}. We then apply Theorem \ref{thm-newobst} to $11a_{255},12a_{358},12n_{620}$, and $12n_{656}$ in Section \ref{11a255results}, and to $12n_{586}$ in Section \ref{sec-12n586}. In Section \ref{sectionseifert} we describe a construction of $2$-adjacent knots adapted from the construction used by Askitas and Kalfagianni in \cite{AK}. An appendix is included with an explicit construction of all $2$-adjacent knots up to $12$ crossings.

\section{Background} \label{sectionbackground}

Some notation should be set out in advance. We use many common knot invariants and use standard notation as follows. 

\subsection{The Alexander polynomial}

The Alexander polynomial $\Delta_K(t)$ of a knot $K$ can be defined $\Delta_K(t)=\det(V-tV^T)$ up to multiplication by a unit, where $V$ is the Seifert matrix corresponding to a Seifert surface of $K$. We use the notation $-K$ to represent the mirror of a knot $K$.

Basic properties of the Alexander polynomial:

\begin{itemize}
    \item The Alexander polynomial is symmetric: $\Delta_K\left(t^{-1}\right)=\Delta_K\left(t\right)$ for all knots $K$.
    \item $\Delta_K(1)= \pm 1$.
    \item If $-K$ is the mirror of $K$, then $\Delta_{-K}(t)=\Delta_K(t)$.
    \item The determinant of a knot $K$ is $\det(K)=\Delta_K(-1)=|H_1(\Sigma(K))|$ (see Section \ref{sec:dbc-montesinos}).
    \item The Alexander polynomial is equivalent to the Conway polynomial of a knot, $\nabla_K(z)=\sum_{i=0}^na_iz^i$ under the relation $\Delta_K(t)=\nabla_K(t^\frac12-t^{-\frac12})$.
\end{itemize}

\subsection{The HOMFLY-PT polynomial}

The HOMFLY-PT polynomial of a knot is a Laurent polynomial in two variables, $\ell$ and $m$. Call it $P_K(\ell,m)=\sum_{i=0}^np_{i_K}(\ell)m^i$. Each $p_{i_K}(\ell)$ is a Laurent polynomial in $\ell$. We will use the following relations to define the HOMFLY-PT polynomial:

\begin{enumerate}
    \item $P_\mathcal{U}(\ell,m)=1$.
    \item $\ell P_{L_+}(\ell,m)+\ell^{-1}P_{L_-}(\ell,m)+mP_{L_0}(\ell,m)=0$.
\end{enumerate}

\subsection{Signed unknotting number} 

A knot $K$ has \textit{positive} (respectively, \textit{negative}) unknotting number $1$ if there exists a diagram with a positive (respectively, negative) crossing such that changing it produces a diagram of the unknot. A $2$-adjacency set can include unknotting crossings of the same sign or of different signs. Both display different properties.

\subsection{Known invariants of 2-adjacent knots}

We know the following basic facts about various invariants of $2$-adjacent knots:

\begin{prop}\label{prop-basic-obstructions}
If a knot $K$ is $2$-adjacent, then
    \begin{enumerate}
    \item The unknotting number of $K$ is $1$.
    \item The signature of $K$, $\sigma (K)$, is either $0$ or $\pm2$. If $K$ has positive unknotting number $1$ and negative unknotting number $1$, then $\sigma (K)=0$.
    \item If $K$ is rational, then $K$ is either $3_1$ or $4_1$.
    \item In the Conway polynomial of $K$, either $a_2=\pm1$ or $a_2=0$. In the case where $a_2=0$, $a_4$ is a perfect square.
    \end{enumerate}
\end{prop}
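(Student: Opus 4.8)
The plan is to treat the four parts in turn, with (1) and (2) reducing to standard facts about crossing changes and (3), (4) needing more work. Part (1) is immediate: switching one crossing of the $2$-adjacency set produces a diagram of the unknot, so $u(K)\le 1$, and $u(K)=1$ since $K$ is nontrivial. For part (2), I would use that the signature of a knot is even and cannot decrease under switching a positive crossing to a negative one: if $D_-$ comes from $D_+$ by such a switch then $0\le\sigma(D_-)-\sigma(D_+)\le 2$. Applied to an unknotting crossing this gives $|\sigma(K)|\le 2$, so $\sigma(K)\in\{0,\pm2\}$. If $K$ has a positive unknotting crossing, put $K=D_+$ and $\mathcal U=D_-$, so $-\sigma(K)=\sigma(\mathcal U)-\sigma(K)\in\{0,2\}$ and $\sigma(K)\in\{0,-2\}$; symmetrically a negative unknotting crossing gives $\sigma(K)\in\{0,2\}$, and if both occur then $\sigma(K)=0$.

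For part (4), I would apply the Conway skein relation twice, at the crossings $c_1,c_2$ of the $2$-adjacency set. Expanding at $c_1$ in $K$ gives $\nabla_K(z)-1=\varepsilon_1 z\nabla_E(z)$, where $E$ is the oriented resolution of $K$ at $c_1$ and $\varepsilon_1=\pm1$ is the sign of $c_1$; expanding at $c_1$ in the $c_2$-switched diagram (an unknot diagram whose $c_1$-switch is again the unknot) gives $\nabla_{E'}(z)=0$, where $E'$ is $E$ with $c_2$ switched. Applying the skein relation at $c_2$ to the pair $(E,E')$ then gives $\nabla_E(z)=\varepsilon_2 z\nabla_L(z)$, with $\varepsilon_2=\pm1$ the sign of $c_2$ and $L$ the oriented resolution of $K$ at both $c_1$ and $c_2$, so that
\[\nabla_K(z)=1+\varepsilon_1\varepsilon_2\, z^2\,\nabla_L(z).\]
Since a self-crossing smoothing of a knot yields two components, $L$ has $1$ or $3$ components. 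If $L$ is a knot then $\nabla_L(0)=1$, so $a_2=\pm1$. If $L$ has $3$ components then $z^2\mid\nabla_L$, so $a_2=0$; moreover $a_2=0$ forces the two components of $E$ to have linking number $0$, and since one of them splits into two components of $L$, two of the three pairwise linking numbers of $L$ are negatives of one another. Hoste's formula for the lowest coefficient of $\nabla_L$ then makes that coefficient $-t^2$ for some $t\in\mathbb Z$, and tracking $\varepsilon_1\varepsilon_2$ identifies $a_4$ with $t^2$. This reproves the Conway-polynomial restriction of Tao~\cite{Tao1}.

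Part (3) then reduces to a finite check. By (1) a rational $2$-adjacent knot has unknotting number one, so it lies on Kanenobu--Murakami's list of two-bridge knots with unknotting number one; imposing the signature restriction of (2), the Conway-polynomial restriction of (4), and, if necessary, the determinant restriction $\det(K)=4\omega^2\pm1$ of Theorem~\ref{thm-newobst}, this list collapses to $3_1$ and $4_1$, both of which are shown to be $2$-adjacent in Theorem~\ref{thm2adjknots}.

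I expect the main obstacle to be the sign analysis buried in part (4). The reduction to $\nabla_K=1+\varepsilon_1\varepsilon_2 z^2\nabla_L$ and the resulting dichotomy for $a_2$ are routine skein theory, but showing that in the $3$-component case the lowest Conway coefficient of $L$ is genuinely negative and that the factor $\varepsilon_1\varepsilon_2$ compensates exactly --- so that $a_4$ is a true perfect square rather than one up to sign --- is where the full force of $2$-adjacency (all three crossing configurations unknotted, not just $u(K)\le 2$) has to be used, presumably by relating the signs $\varepsilon_1,\varepsilon_2$ to the number of components of $L$. A secondary concern is checking that the pruning in part (3) really terminates at $\{3_1,4_1\}$ and does not leave a handful of small two-bridge knots to be excluded individually.
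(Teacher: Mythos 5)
Your parts (1) and (2) are fine and coincide with the paper's argument (the paper just cites Murasugi for the signature jump). For (3) and (4), however, the paper's proof is simply a citation -- (3) is Theorem 1.1 of Torisu \cite{TORISU_2004} and (4) is Theorems 3.1--3.2 of Tao \cite{Tao1} -- whereas you attempt to reprove both, and the reproofs have genuine gaps. In (4), your skein computation correctly yields $\nabla_K=1+\varepsilon_1\varepsilon_2 z^2\nabla_L$, the $1$-- vs.\ $3$--component dichotomy for $L$, and, in the $3$--component case, $\operatorname{lk}(L_1,L_3)=-\operatorname{lk}(L_2,L_3)=t$, so that the lowest coefficient of $\nabla_L$ is $-t^2$ and hence $a_4=-\varepsilon_1\varepsilon_2\,t^2$. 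But the statement you must prove is that $a_4$ is a perfect square, i.e.\ that $\varepsilon_1\varepsilon_2=-1$ whenever $t\neq0$; nothing in the Conway skein relations you have used forces this, and you offer no argument for it beyond ``presumably by relating the signs to the number of components of $L$.'' This is exactly the content that distinguishes Tao's theorem from the easy bound $a_4=\pm t^2$ (compare Proposition \ref{taoconway}, which records that $a_2=0$, $a_4>0$ forces crossings of both signs), so as written part (4) is not proved.

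Part (3) is also not established. The Kanenobu--Murakami family of two-bridge knots with unknotting number one is infinite (it contains all twist and double-twist knots, among others), so the problem does not ``reduce to a finite check,'' and you give no argument that imposing $|\sigma|\le2$, $a_2\in\{0,\pm1\}$, $a_4$ a perfect square, and $\det(K)=4\omega^2\pm1$ eliminates every member other than $3_1$ and $4_1$; that would itself be a nontrivial number-theoretic/computational claim about an infinite two-parameter family, and it is not obviously true. (Invoking Theorem \ref{thm-newobst} is legitimate, since its proof in the paper does not use this proposition, but it does not by itself finish the job.) Torisu's theorem is proved by a genuinely different, geometric argument about $2$-adjacency of $2$-bridge knots, not by pruning with these numerical invariants. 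If your goal is a proof at the level of rigor of the paper, citing Torisu and Tao for (3) and (4), as the paper does, closes both gaps; otherwise the sign analysis in (4) and the infinite pruning in (3) must actually be carried out.
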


\begin{proof}
\hspace{.1in}
    \begin{enumerate}
        \item By definition.
        \item The unknot has signature $0$. Since changing a crossing in a diagram of a knot $K$ changes the signature by at most $2$ \cite{murasugi1965certain}, $\sigma(K)=\pm2$ or $\sigma(K)=0$. Furthermore, changing a positive (negative) crossing will increase (decrease) the signature by $0$ or $2$. Suppose a knot $J$ has both a positive and a negative unknotting crossing, and $\sigma(J)= 2$. Then, changing a positive crossing could not yield a knot with signature $0$. Similarly, if $\sigma(J)=-2$, then changing a negative crossing could not yield a knot with signature $0$. Therefore $\sigma(J)=0$.
        \item By Theorem 1.1 in \cite{TORISU_2004}.
        \item By Theorems 3.1 and 3.2 in \cite{Tao1}.
    \end{enumerate}
\end{proof}

\subsection{Dehn Surgery} \label{sec:ch2-dehnsurgery}

Dehn surgery modifies a $3$-manifold $Y$ by removing a tubular neighborhood of a knot (or link) and replacing it with a solid torus (or a disjoint union of tori) in a specified way. Here we follow the notation of Saveliev \cite{saveliev}. Let $K$ be a knot in a closed, oriented $3$-manifold $Y$. A tubular neighborhood $N(K)$ of a knot $K\subset Y$ is a solid torus $D^2\times S^1$. If we cut open $Y$ along the torus boundary $\partial N(K)$ of this solid torus, we get two manifolds: $N(K)$ and $Y\setminus \operatorname{int} \left(N(K)\right)$, the knot exterior. Note that $Y\setminus \operatorname{int} N(K)$ is a manifold with torus boundary, and $Y=Y\setminus \operatorname{int} N(K) \cup (D^2\times S^1)$. We can glue $D^2\times S^1$ back into the knot exterior via a homeomorphism $h: \partial D^2\times S^1 \rightarrow \partial (Y\setminus \operatorname{int} N(K))$ to obtain a closed orientable $3$-manifold $Y'=Y\setminus \operatorname{int} N(K) \cup_h (D^2\times S^1)$. This process is called \textit{Dehn surgery}. 

We call a curve $\partial D^2\times \{*\}$ a \textit{meridian} $\mu$, and it bounds a disk in $N(K)$. If we look at the image $h(\mu)$ on $\partial Y\setminus \operatorname{int} N(K)$, this completely determines our new manifold $Y'$. If $Y=S^3$, then up to isotopy, the curve $h(\mu)$ (or any curve on $\partial (S^3\setminus \operatorname{int} N(K))$) is given by integers $p$ and $q$, which are relatively prime. We can see this by taking note of two homology classes of curves in $N(K)=D^2\times S^1$. One is represented by a meridian $\mu$, and the other is the \textit{longitude} $\lambda$, represented by a simple curve which is nullhomologous in the knot complement and intersects the meridian transversely in exactly one point. These curves provide a basis for the first homology group $H_1(\partial (S^3\setminus \operatorname{int} N(K)))$. Thus, we can describe any simple closed curve $\gamma$ in $\partial (S^3\setminus \operatorname{int} N(K))$ as a linear combination of $\mu$ and $\lambda$. This curve $\gamma$, in turn, specifies a homeomorphism $h: T'\rightarrow T$ by mapping the meridian of $T'$ to (a curve isotopic to) $\gamma$. When performing this surgery where $h(\mu)=\gamma$, we call $\gamma$ the \textit{surgery slope}. If $[\gamma]=p\cdot [\mu] + q\cdot [\lambda]$, we say we have performed $\frac{p}{q}$-surgery on $K\subset S^3$ and $\frac{p}{q}\in\mathbb{Q}\cup \infty$ is called the \textit{surgery coefficient}.  We denote $\frac{p}{q}$-surgery on $K$ in $S^3$ by $S^3_{\frac{p}{q}}(K)$. This process naturally generalizes to surgery on an $n$-component link $L=L_1\sqcup \dots \sqcup L_n$ where the boundary of a tubular neighborhood of $L$ is $T_1\sqcup \dots \sqcup T_n$, surgery slopes $\gamma_1,\dots,\gamma_n$, and surgery coefficients $\frac{p_1}{q_1}, \dots, \frac{p_n}{q_n}$.

\subsection{The linking matrix} \label{sec:ch2-linkingform}

First we define \textit{linking number} of a $2$-component link.

\begin{mydef} 
    Let $L_1$ and $L_2$ be two disjoint oriented knots in $S^3$. Consider all crossings in a regular projection of $L_1\cup L_2$ such that $L_1$ crosses under $L_2$. Their \emph{linking number} $\operatorname{lk}(L_1,L_2)$ is defined as (\# positive such crossings) $-$ (\# negative such crossings).\
\end{mydef}

This allows us to define the \textit{linking matrix} of an $n$-component link.

\begin{mydef}  
    Let $\mathcal{L} = L_1\cup L_2 \cup \dots  \cup L_n$ be an oriented framed link in $S^3$, the $i^\text{th}$ component having framing $e_i\in \mathbb{Z}$. The matrix $A=a_{ij},\ i,j=1,\dots ,n,$ with the entries
    $$a_{ij} = \begin{cases}
        e_i, \ \ \ \ \ \ \ \  \ \ \ \ \ \ \text{if } i=j\\
        \operatorname{lk}(L_i,L_j), \ \ \ \ \text{if } i\not = j,
    \end{cases}
    $$
    is called the \emph{linking matrix} of $\mathcal{L}$.
\end{mydef}

\subsection{Branched double-covers and the Montesinos trick}\label{sec:dbc-montesinos}

A fundamental $3$-manifold associated to a knot $K$ is the \textit{double cover of $S^3$ branched along $K$}, denoted $\Sigma(K)$. We often refer to this manifold as the \textit{branched double-cover of $K$}. Intuitively, we can think of this as taking the double cover of $S^3\setminus K$ and then gluing $K$ back in to this new $3$-manifold. For example, the branched double-cover of the unknot is $S^3$, and the branched double-cover of a rational knot is a lens space. For an oriented manifold $M$, we use the notation $-M$ to represent $M$ with opposite orientation.

A key ingredient in this work will be the application of the Montesinos trick \cite{Montesinos1975SURGERYOL}, which relates the branched double-cover $\Sigma(K)$ of a knot $K$ to surgery $S^3_{\frac{p}{q}}(J)$ on another knot $J$. If $u(K)=1$, then there is a diagram that contains a crossing $c$ such that performing a crossing change at $c$ unknots $K$. In $S^3$, we can find a \textit{crossing disk} for $c$, i.e., a disk which intersects $K$ twice with zero algebraic intersection number. Let $D$ be the crossing disk associated with the unknotting crossing $c$. We can draw an unknotted arc $\gamma$ in $D$ with boundary on $K$ (as in the middle diagram in the top row of Figure \ref{fig.11a255montesinos}). We call $\gamma$ a \textit{crossing arc} for $c$. Note that modifying $K$ in a tubular neighborhood of $\gamma$ yields the unknot: a full twist of the two strands of $K$ inside a neighborhood of the framed arc $\gamma$ changes the over-crossing to an under-crossing, i.e. it produces a crossing change. From a dual perspective, we can visualize $K$ as an unknot together with a knotted arc $\zeta$ (as in the bottom left diagram of Figure \ref{fig.11a255montesinos}) such that modifying the unknot in a neighborhood of $\zeta$ gets us back to $K$. Thus, in $\Sigma(\mathcal{U}) = S^3$, $\zeta$ lifts to a knot $J$. The specific version of the Montesinos trick that we will rely on is a refinement of Proposition 4.1 in \cite{jgreenedonaldson} and Lemma 1.5 in \cite{OzsvSz-UK}, using this notation:

\begin{prop} \label{signedmontesinosthm}
     Suppose that $K$ is a knot with unknotting number one, and reflect it if necessary so that it can be unknotted by changing a negative crossing $c$ to a positive one. Let $\gamma$ be a crossing arc for $c$. Changing the crossing $c$, we get a diagram of the unknot together with an arc $\zeta$. Then $\Sigma(K) = S^3_{-\epsilon \frac{d}{2}}(J)$, where $J \subset \Sigma(\mathcal{U})=S^3$ is the lift of $\zeta$, $\epsilon = (-1)^{\frac{\sigma(K)}{2}}$, and $d = \det(K)$.
\end{prop}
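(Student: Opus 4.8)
The plan is to establish this statement as the \emph{signed refinement} of the Montesinos trick: I would take the qualitative conclusion $\Sigma(K) = S^3_{\pm d/2}(J)$ from Proposition~4.1 of \cite{jgreenedonaldson} and Lemma~1.5 of \cite{OzsvSz-UK}, and concentrate on pinning down the coefficient together with its sign in terms of the data $\sigma(K)$ and the sign of the crossing $c$. First I would fix the geometry and all orientations. Realize the crossing change at $c$ as $\pm1$-surgery on the crossing circle $L=\partial D$, an unknot with $\operatorname{lk}(L,K)=0$; the hypothesis that $c$ is the \emph{negative} crossing being changed to a positive one fixes the sign of this surgery. Pushing the picture to $\Sigma(\mathcal{U})=S^3$, the disk $D'$ (image of $D$) meets the branch locus $\mathcal{U}$ in two points, so its preimage is an annulus $A$ whose core is the lift $J$ of the crossing arc $\zeta$ and whose two boundary components $J_1\sqcup J_2$ form the lift of $L'$. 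Thus $\Sigma(K)$ is surgery on the pair $J_1\sqcup J_2$, two copies of $J$ made parallel by $A$.

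Next I would extract the coefficient. Because $J_1$ and $J_2$ cobound the twisted annulus $A$, the two-component surgery has linking matrix $\left(\begin{smallmatrix} b & a \\ a & b\end{smallmatrix}\right)$, where $a=\operatorname{lk}(J_1,J_2)$ is the annulus framing and $b$ is the (common, by the covering involution) lift of the $\pm1$ coefficient on $L'$. A handle slide of $J_2$ over $J_1$ along $A$ collapses this to a single surgery on $J$, and the denominator $2$ is forced by the fact that $\zeta$ is an arc between two branch points, so that a meridian of $J$ covers a meridian of $\zeta$ two-to-one. Since $d=\det(K)=|H_1(\Sigma(K))|$ and half-integer surgery $S^3_{P/2}(J)$ has $|H_1|=|P|$ with $d$ odd, the coefficient must be $\pm\frac{d}{2}$. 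This recovers the magnitude; the remaining and essential point is the sign.

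The sign is the crux, and I would attack it through signatures of bounding $4$-manifolds. On one side, pushing a Seifert surface $F$ of $K$ into $B^4$ and taking the double branched cover produces a $4$-manifold $X$ with $\partial X=\Sigma(K)$ and intersection form the symmetrized Seifert form $V+V^T$, so $\sigma(X)=\sigma(K)$ in our convention. On the other side, the parallel-copy surgery realizes $\Sigma(K)$ as the boundary of a $4$-manifold $Z$ with intersection form $\left(\begin{smallmatrix} b & a \\ a & b\end{smallmatrix}\right)$ and signature $\operatorname{sgn}(b+a)+\operatorname{sgn}(b-a)$, where the sign of $b$ is dictated by the crossing-circle surgery fixed in the first step, hence by the hypothesis on $c$. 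Comparing these two fillings of $\Sigma(K)$ (gluing $X$ to $-Z$ and accounting for the signature of the resulting closed manifold) forces the sign of the surgery coefficient to correlate with $\sigma(K)$: when $\sigma(K)=0$ the coefficient is negative, and when $\sigma(K)=\pm2$ it is positive, i.e.\ it equals $-\epsilon\frac{d}{2}$ with $\epsilon=(-1)^{\frac{\sigma(K)}{2}}$.

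The main obstacle is exactly this last sign bookkeeping: every reflection, the orientation of $\Sigma(K)$, the choice of longitude, the signs of $b\pm a$, and the normalization in $\sigma(X)=\sigma(K)$ must be tracked consistently so that the output is $-\epsilon$ rather than $+\epsilon$. To guard against a global sign error I would calibrate the final normalization against known examples realizing each signature value, e.g.\ $3_1$ and $4_1$, where $\Sigma(K)$ is a lens space and the half-integer surgery coefficient can be computed directly and compared with $-\epsilon\frac{d}{2}$.
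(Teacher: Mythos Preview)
The paper does not supply its own proof of this proposition. It is stated as a refinement of Proposition~4.1 in \cite{jgreenedonaldson} and Lemma~1.5 in \cite{OzsvSz-UK} and then simply used; no argument follows the statement. So there is no ``paper's proof'' to compare your proposal against in the usual sense.

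That said, your sketch is a reasonable outline of how the signed statement is actually established in the sources the paper cites. The magnitude part (lifting the crossing circle to two parallel copies of $J$ cobounding an annulus, handle-sliding to a single half-integral surgery, and reading off $|H_1|=d$) is exactly the standard Montesinos-trick argument. For the sign, your plan to compare the signature of the branched double cover of $B^4$ over a pushed-in Seifert surface with the signature of the $2$-handlebody coming from the surgery description is in the spirit of how Ozsv\'ath--Szab\'o pin down the sign in \cite{OzsvSz-UK}; they phrase it through the Goeritz/linking form rather than an explicit $4$-manifold gluing, but the content is the same. Your own caveat is the right one: the only genuine risk in this argument is a global sign slip in the chain (crossing sign $\to$ surgery sign on $L$ $\to$ lifted framing $b$ $\to$ signature of $Z$ $\to$ comparison with $\sigma(K)$), and your proposed calibration against $3_1$ and $4_1$ is the standard and sufficient cure. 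One small point to tighten: the step ``gluing $X$ to $-Z$ and accounting for the signature of the resulting closed manifold'' needs you to know that this closed signature vanishes (or to control it), not merely to invoke Novikov additivity; here this follows because $\Sigma(K)$ is a rational homology sphere and both $X$ and $Z$ can be taken to have the same second Betti number, but you should say so explicitly.
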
 

One helpful observation is that the branched double-cover of an unknotting number one knot has cyclic first homology.

\subsection{Heegaard Floer homology} \label{sec:ch2-hfhomology}

 Heegaard Floer homology, introduced by Ozsv\'ath and Szab\'o \cite{ozsvath2004holomorphic}, is a rich set of topological invariants for a closed, oriented $3$-manifold $Y$ equipped with a $\operatorname{Spin}^c$ structure. We refer the reader to \cite{ozsvath2004holomorphic} for a thorough discussion, but we will discuss some necessary terms here. Heegaard Floer homology assigns a sequence of bigraded modules, for instance $\widehat{HF}(Y)$, to a $3$-manifold $Y$, and can provide a combinatorial way to encode their topology via chain complexes. Specifically, $\widehat{HF}(Y) $ is typically computed as a sum of vector spaces $\widehat{HF}(Y, \mathfrak{t})$, where $\mathfrak{t}$ runs over the set of $\operatorname{Spin}^c$ structures on $Y$. Thus, the Heegaard Floer homology of a $3$-manifold $Y$ can be decomposed into submodules, each corresponding to a distinct $\operatorname{Spin}^c$ structure. We will need the following definition in Section \ref{rulingoutalexpolys}.

    \begin{mydef}
        Let $M$ be a rational homology three-sphere. If $\operatorname{rk} \widehat{HF}(M)=\left|H_1(M ; \mathbb{Z})\right|$, then $M$ is an $L$-space \cite{ozsvath2004holomorphic}. If a knot $K\subset S^3$ has the property that $S^3_{\frac{p}{q}}(K)$ is an $L$-space for some $\frac{p}{q}>0$, then $K$ is called an $L$-space knot.
    \end{mydef}

\textit{Knot Floer homology}, a version of Heegard Floer homology for knots and links \cite{ozsvath2003holomorphicdisksknotinvariants}\cite{rasmussen2003floerhomologyknotcomplements}, can detect the genus, fiberedness (whether a knot complement fibers over a circle), as well as the unknot. Knot Floer homology also categorifies the Alexander polynomial.

    In Section \ref{rulingoutalexpolys}, we will explain that when $\Sigma(K)$ is an $L$-space, we can find all possible Alexander polynomials for a knot $J$ with $S^3_{\pm \frac{d}{2}}(J)=\Sigma(K)$ using Heegaard Floer \textit{$d$-invariants}.

    \subsubsection{Heegaard Floer $d$-invariants}

        For a rational homology $3$-sphere $Y$ equipped with a $\operatorname{Spin}^c$ structure $\mathfrak{t}$, Ozsv\'{a}th and Szab\'{o} define ``correction terms'' ($d$-invariants) $\in\mathbb{Q}$ \cite{absolutelygraded}.
        
        \begin{mydef} Let $Y$ be a rational homology three-sphere. The correction term $d(Y, \mathfrak{t})$ is the minimal grading, $\widetilde{\mathrm{gr}}$, of any non-torsion element in the image of $H F^{\infty}(Y, \mathfrak{t})$ in $H F^{+}(Y, \mathfrak{t})$.
        \end{mydef}
        
        \noindent If we let $\overline{\mathfrak{t}}$ denote the conjugate of $\mathfrak{t}$, these correction terms have the following properties:
            \[
            d(Y, \mathfrak{t})=d(Y, \overline{\mathfrak{t}})
            \]
        and
            \[
            d(Y, \mathfrak{t})=-d(-Y, \mathfrak{t})
            \]

        The $d$-invariants of a lens space $L(p,q)$ are calculated directly using a canonical ordering (see Proposition 4.8 in Section 4.1 of \cite{absolutelygraded}) of the $\operatorname{Spin}^c$ structures, indexed by $i$:

    \begin{prop}\cite{absolutelygraded}\label{dinvt-fla}
        Fix positive, relatively prime integers $p>q$, and also choose an integer with $0 \leq i<p+q$. Then we have the following recursive formula:
            $$
            d(-L(p, q), i)=\left(\frac{p q-(2 i+1-p-q)^2}{4 p q}\right)-d(-L(q, r), j),
            $$
            where $r$ and $j$ are the reductions modulo $q$ of $p$ and $i$, respectively.
    \end{prop}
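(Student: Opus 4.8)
The plan is to prove the formula by induction, following the structure of the Euclidean algorithm that the recursion secretly encodes: the passage $(p,q)\mapsto(q,r)$ with $r\equiv p\pmod q$ is exactly one reduction step, and since $\gcd(p,q)=1$ it terminates at $L(1,0)=S^3$, where $d=0$. Accordingly I would take as the base case $q=1$ (equivalently, the terminal step), where the right-hand side collapses to $\frac{p-(2i-p)^2}{4p}$; this must be checked to agree with the independently known $d$-invariants of $-L(p,1)$, which can be computed directly since $-L(p,1)$ is $\mp p$-surgery on the unknot. The inductive hypothesis is that the formula holds for $-L(q,r)$, whose parameters are strictly smaller in the Euclidean sense.

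For the inductive step I would exploit the fact that lens spaces are $L$-spaces: in each $\operatorname{Spin}^c$ structure $\mathfrak{t}$ one has $HF^{+}(-L(p,q),\mathfrak{t})\cong\mathcal{T}^{+}$, a single tower, so $d(-L(p,q),\mathfrak{t})$ is simply the minimal grading of that tower and the correction term is governed by the absolute $\mathbb{Q}$-grading alone. The geometric input is a $2$-handle cobordism $W$ (possibly a composite linear-plumbing cobordism realizing one reduction step of the continued-fraction expansion) giving a surgery exact triangle that relates $HF^{+}(-L(p,q))$ and $HF^{+}(-L(q,r))$ together with $HF^{+}(S^3)$. Because all the relevant manifolds are $L$-spaces, in the appropriate $\operatorname{Spin}^c$ structure and in all sufficiently large gradings the cobordism map is an isomorphism between the two towers, so the $d$-invariants of the two lens spaces differ exactly by the grading shift of that map.

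Recall that the grading shift of a $\operatorname{Spin}^c$ cobordism map is $\frac{c_1(\mathfrak{s})^2-2\chi(W)-3\sigma(W)}{4}$. I would compute this for $W$ equipped with the $\operatorname{Spin}^c$ structure $\mathfrak{s}$ restricting to the canonical representatives indexed by $i$ and $j$. Since both boundary components are rational homology spheres, $c_1(\mathfrak{s})^2$ is evaluated through the rational intersection form of $W$; carrying out this computation should yield precisely the quadratic term $\frac{pq-(2i+1-p-q)^2}{4pq}$, while the minus sign in front of $d(-L(q,r),j)$ arises from the orientation-reversal rule $d(Y,\mathfrak{t})=-d(-Y,\mathfrak{t})$, reflecting how the reduced lens space sits at the far end of the cobordism.

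The hard part will be the precise bookkeeping, of two kinds. First, I must match the canonical ordering of $\operatorname{Spin}^c$ structures on $-L(p,q)$ with those on $-L(q,r)$ so that index $i$ corresponds to $j=i\bmod q$; the conjugation symmetry $d(Y,\mathfrak{t})=d(Y,\overline{\mathfrak{t}})$ should help pin down this parametrization. Second, I must evaluate $c_1(\mathfrak{s})^2$ on the rational cobordism and land on exactly $(2i+1-p-q)^2/(4pq)$, which is delicate linear algebra with the (rational) intersection form of the surgery description. Once both the $\operatorname{Spin}^c$ correspondence and the grading shift are nailed down, the recursion, and hence the proposition, follow by the induction set up above.
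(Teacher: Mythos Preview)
The paper does not prove this proposition at all: it is stated with the citation \cite{absolutelygraded} and used as a black box, so there is no ``paper's own proof'' to compare against. Your outline is in fact a reasonable sketch of how Ozsv\'ath and Szab\'o establish the formula in the cited reference---induction along the continued-fraction/Euclidean reduction $(p,q)\mapsto(q,r)$, using the $L$-space property of lens spaces and the grading-shift formula $\tfrac{c_1(\mathfrak{s})^2-2\chi-3\sigma}{4}$ for the elementary cobordism---but for the purposes of this paper no proof is expected or required.

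If you do want to flesh out the argument for your own understanding, the delicate points you flag are exactly the ones that matter: the canonical labelling of $\operatorname{Spin}^c$ structures so that $i\mapsto j=i\bmod q$ is compatible with restriction along the cobordism, and the rational intersection-form computation that produces the quadratic term. Both are handled carefully in Section~4.1 of \cite{absolutelygraded}; your plan would essentially be reproducing that section.
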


\section{Torres/Baker-Motegi obstruction} \label{torres}

In this section, we prove Theorem \ref{thm-newobst}, which is a novel method for obstructing $2$-adjacency. Theorem \ref{thm-newobst} together with the discussion in Section \ref{rulingoutalexpolys} will be used to obstruct  $11a_{255}$, $12a_{358}$, $12n_{586}$, $12n_{620}$, and $12n_{656}$ from being $2$-adjacent.  To begin, we will outline several necessary prerequisite results. 

Let $K$ be a knot in $S^3$ with unknotting number equal to one. Recall from Section \ref{sec:dbc-montesinos} that there is a diagram that contains a crossing $c$ such that performing a crossing change at $c$ unknots $K$. In $S^3$, we can find a crossing disk $D$ for $c$, an unknotted crossing arc $\gamma$ in $D$ with boundary on $K$, and after performing a crossing change at $c$, we have an unknot together with an arc $\zeta$ which lifts to a knot $J$ in $\Sigma(\mathcal{U})=S^3$. 

We will also use the following relation between the two-variable Alexander polynomial\\$\Delta_{L_1\cup L_2}(x,y)\in\mathbb{Z}[x^{\pm1},y^{\pm1}]$ of an oriented link $L_1 \cup L_2$ (with $\operatorname{lk}\left(L_1, L_2\right)=\omega$) and the Alexander polynomial $\Delta_K(t)\in\mathbb{Z}[t^{\pm1}]$ of one component due to Torres \cite{torres}:
\begin{equation}\label{torresformula}
\Delta_{L_1 \cup L_2}(t, 1) \doteq \frac{t^\omega-1}{t-1} \Delta_{L_1}(t)
\end{equation}
Here, $\doteq$ signifies equivalence up to multiplication by a unit in the corresponding Laurent polynomial ring.

Baker and Motegi \cite{bakermotegi} apply \eqref{torresformula} to a special family of links  $\kappa_n \cup c_n$ as in Figure \ref{fig:bakermotegi}. They define $\kappa_n\cup c_n$ to be the family of links obtained by $\left(-\frac{1}{n}\right)$-surgery on a disjoint unknot $c$, where $c$ does not bound a disk disjoint from $\kappa$, and it is not a meridian of $\kappa$. In this case, they call the sequence of knots that are the images of $\kappa$ after $\left(-\frac{1}{n}\right)$-surgery on $c$ a \textit{twist family of knots $\{\kappa_n\}$ obtained by twisting the knot $\kappa$ along $c$}. 

\begin{figure}

    \centering
\begin{minipage}[b]{0.2\textwidth}
		\centering
		\resizebox{\linewidth}{!}{
    \begin{tikzpicture}[line width=5pt]
 \begin{knot}[
 	           clip width=20,
 	           clip radius=5pt,
	            consider self intersections=no splits, 
	            end tolerance=0.1pt
 	           ]

\strand[thick] (0,0) to[out=90,in=270] (0,6);
\strand[thick] (4,0) to[out=90,in=270] (4,6);
\strand[thick] (0,6) to[out=90,in=90,looseness=1.2] (4,6);
\strand[thick] (0,0) to[out=270,in=270,looseness=1.2] (4,0);

\strand[thick] (1,0) to[out=90,in=270] (1,6);
\strand[thick] (3,0) to[out=90,in=270] (3,6);
\strand[thick] (1,6) to[out=90,in=90,looseness=1.2] (3,6);
\strand[thick] (1,0) to[out=270,in=270,looseness=1.2] (3,0);

\draw[dashed,line width=1pt] (0,3.5) -- (.9,3.5);
\draw[dashed,line width=1pt] (0,5.7) -- (.9,5.7);

\draw[dashed,line width=1pt] (3,3.5) -- (4,3.5);
\draw[dashed,line width=1pt] (3,4.7) -- (4,4.7);

\strand[green!40!gray,line width=2pt] (4.25,4.5) 
  arc[start angle=60,end angle=-240,
      x radius=1.5, y radius=0.35];

\flipcrossings{1,2}
\end{knot}

\draw[blue,line width=2pt,fill=white] (2.0,2.0) rectangle (5,2.9);

\draw[black,line width=2pt,fill=white] (-.25,4.0) rectangle (1.25,5.1);

\node at (0.5,4.55) {\Large \textbf{T}};

\node at (0.25,7.5) {\Large $K_n$};

\node at (3.54,2.45) {\Large \textcolor{blue}{n twists}};

\node at (4.75,4.75) {\Large \textcolor{green!40!gray}{\textbf{c}}};
\end{tikzpicture}
		}
	\end{minipage}
    \caption{\textit{Twisting a knot $\kappa$ along an unknot $c$.} This is a simplified diagram. We assume nontrivial linking between $c$ and $\kappa$, so the $n$ twists are applied to all strands passing through $c$, producing $\kappa_n$.}
    \label{fig:bakermotegi}
\end{figure}
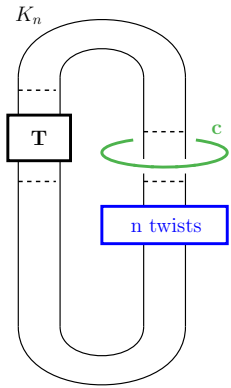

\begin{prop}[\cite{bakermotegi}]\label{prop:bakermotegi}
    Let $\{\kappa_n\}$ be the twist family of knots in a homology sphere obtained by twisting the knot $\kappa$ along an unknot $c$. Then
    $$
    \frac{t^\omega-1}{t-1} \Delta_{\kappa_n}(t) \doteq \Delta_{\kappa \cup c}\left(t, t^{n \omega}\right) .
    $$
\end{prop}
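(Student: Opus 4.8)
The plan is to derive the identity from the two-variable Torres formula \eqref{torresformula} by exhibiting $\kappa_n$, together with a suitable unknot, as a two-component link whose exterior is literally the \emph{same} manifold as the exterior of $\kappa\cup c$, only with a re-parametrized peripheral torus over the $c$-component. Concretely, let $V$ be the solid torus glued back when one performs $(-1/n)$-surgery on $c$, and let $c_n$ be its core. Since $N(c_n)$ may be taken to be $V$, we have
$$E(\kappa_n\cup c_n)\;=\;E(c)\setminus N(\kappa)\;=\;E(\kappa\cup c),$$
the three exteriors agreeing as manifolds; what changes is only the meridian–longitude framing on the torus lying over $c$.

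Next I would record that change of framing. The new meridian $\mu_{c_n}$ is the surgery slope $-\mu_c+n\lambda_c$; the meridian of $\kappa_n$ is unchanged, $\mu_{\kappa_n}=\mu_\kappa$; and the Seifert longitude of $c_n$ is the curve null-homologous in the exterior of $c_n$ alone, which is again $E(c)$, so $\lambda_{c_n}=\pm\lambda_c$. Passing to $H_1(E(\kappa\cup c))=\mathbb{Z}\langle\mu_\kappa\rangle\oplus\mathbb{Z}\langle\mu_c\rangle$: since $c$ is an unknot, $\lambda_c$ bounds a disk meeting $\kappa$ in $\omega$ points algebraically, so $[\lambda_c]=\omega[\mu_\kappa]$, hence $[\mu_{c_n}]=n\omega[\mu_\kappa]-[\mu_c]$. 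The change-of-meridian matrix from $(\mu_\kappa,\mu_c)$ to $(\mu_{\kappa_n},\mu_{c_n})$ is therefore unimodular, and the same computation gives $\operatorname{lk}(\kappa_n,c_n)=\pm\omega$.

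Now I would invoke the standard behaviour of the multivariable Alexander polynomial, which (up to units) depends only on the exterior together with the identification of its first homology by meridians, and which transforms by the corresponding monomial substitution when that identification is changed. Writing $x\leftrightarrow\mu_\kappa$, $y\leftrightarrow\mu_c$ for $\kappa\cup c$ and $x'\leftrightarrow\mu_{\kappa_n}$, $y'\leftrightarrow\mu_{c_n}$ for $\kappa_n\cup c_n$, the homology relations above read $x'=x$ and $y'=x^{n\omega}y^{-1}$, so $\Delta_{\kappa_n\cup c_n}(x',y')\doteq\Delta_{\kappa\cup c}\bigl(x',(x')^{n\omega}(y')^{-1}\bigr)$; setting $y'=1$ and renaming $x'=t$ gives $\Delta_{\kappa_n\cup c_n}(t,1)\doteq\Delta_{\kappa\cup c}(t,t^{n\omega})$. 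Finally, Torres's formula \eqref{torresformula} applied to the two-component link $\kappa_n\cup c_n$ (using $\operatorname{lk}(\kappa_n,c_n)=\pm\omega$ and the fact that $\tfrac{t^{-\omega}-1}{t-1}\doteq\tfrac{t^\omega-1}{t-1}$) gives $\Delta_{\kappa_n\cup c_n}(t,1)\doteq\tfrac{t^\omega-1}{t-1}\Delta_{\kappa_n}(t)$, and comparing the two displays yields the claim. (When $\omega=0$ both sides vanish by the same formula, so the identity holds vacuously.)

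The main obstacle I anticipate is not conceptual but bookkeeping: pinning down the surgery slope, the induced substitution $y\mapsto t^{n\omega}$, and all the orientation and sign conventions precisely — though, pleasantly, every sign ambiguity is absorbed by the ``$\doteq$'', since one only ever evaluates after setting the $c_n$-variable to $1$. The one point that genuinely needs care is justifying that the multivariable Alexander polynomial of a link really is, up to units, intrinsic to the exterior together with its meridional $H_1$-basis and transforms monomially under a change of that basis — that is, being careful about the $(t_i-1)$-type normalization discrepancies between $\Delta_L$ and the order ideal of the Alexander module — but this is classical (Torres, Fox calculus) and can simply be cited.
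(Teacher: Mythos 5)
Your proof is correct, but note that the paper offers no proof of this statement at all: Proposition \ref{prop:bakermotegi} is simply quoted from Baker and Motegi \cite{bakermotegi}, so there is no internal argument to compare against. Your route --- the exterior of $\kappa_n\cup c_n$ is literally that of $\kappa\cup c$, with only the meridian over $c$ changed to $-\mu_c+n\lambda_c$, so that in homology $[\mu_{c_n}]=n\omega[\mu_\kappa]-[\mu_c]$ and $\operatorname{lk}(\kappa_n,c_n)=\pm\omega$; hence the multivariable polynomial transforms by the monomial substitution giving $y\mapsto t^{n\omega}$ once the $c_n$-variable is set to $1$, and Torres's formula \eqref{torresformula} applied to $\kappa_n\cup c_n$ finishes --- is the standard derivation and is essentially the argument of the cited source. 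The only step that genuinely needs care is the one you flag yourself, namely that the multivariable Alexander polynomial is intrinsic to the exterior together with its meridional basis of $H_1$ and transforms monomially under a change of that basis; this is classical (Torres, Fox calculus) and citing it is appropriate.
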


We now have the necessary results to begin our proof of Theorem \ref{thm-newobst}, restated here:

\begingroup
\def\thetheorem{\ref{thm-newobst}}
\begin{theorem}
    If $K$ is a $2$-adjacent knot, then $\det(K)=4\omega^2\pm1$ for some $\omega\in\mathbb{Z}$. If the crossings in the $2$-adjacency set are of opposite sign, then $\det(K)=4\omega^2+1$, and if they are of the same sign, then $\det(K)=4\omega^2-1$.
    Furthermore, let $\gamma$ be a crossing arc for $c$ in the $2$-adjacency set of $K$, and $\zeta$ be the image of $\gamma$ after performing a crossing change at $c$. If $\omega\neq 0$, and $J$ is the lift of $\zeta$ in $\Sigma(\mathcal{U})=S^3$, then $\Delta_J(z)=1$ for all $z=e^{\frac{(2\ell+1)\pi i}{\omega}}$, where $\ell\in \mathbb{Z}$.
\end{theorem}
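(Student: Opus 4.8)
The plan is to use \emph{both} crossings of the $2$-adjacency set simultaneously. Write $c,c'$ for the two crossings, with $c$ the one named in the statement; choose disjoint crossing disks $D,D'$, let $\gamma\subset D$ be the crossing arc for $c$ and $C'=\partial D'$. Changing $c$ produces the unknot $\mathcal U$, the dual arc $\zeta$ (the image of $\gamma$), and its lift $J\subset\Sigma(\mathcal U)=S^3$ as in Proposition~\ref{signedmontesinosthm}, so that $\Sigma(K)=S^3_{-\epsilon d/2}(J)$ with $d=\det(K)$ and $\epsilon=(-1)^{\sigma(K)/2}$. The crossing $c'$ still appears in the diagram of $\mathcal U$, and here is where $2$-adjacency enters: changing $c'$ unknots $K$, $\mathcal U$, and ``$K$ with both crossings changed'' alike; write $\mathcal V$ for $K$ with $c'$ changed and $\mathcal W$ for $K$ with both changed, so $\mathcal V,\mathcal W$ are unknots.

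First I would lift the second crossing change to $\Sigma(\mathcal U)=S^3$. Since $\operatorname{lk}(C',\mathcal U)=0$, the circle $C'$ lifts to a two-component link $\widetilde C=\widetilde C_1\sqcup\widetilde C_2$ which cobounds an annulus (the lift of $D'$), is disjoint from $J$ (because $D'$ is disjoint from $D$, hence from a neighbourhood of $\zeta$), and is preserved by the covering involution $\tau$; as $\tau$ fixes $J$ and interchanges $\widetilde C_1,\widetilde C_2$, the linking numbers $\omega:=\operatorname{lk}(\widetilde C_1,J)=\operatorname{lk}(\widetilde C_2,J)$ agree. Performing on $\widetilde C$ the surgery that realises the crossing change at $c'$ carries $(S^3,J)$ to $(S^3,J_0)$, where $J_0\subset\Sigma(\mathcal W)=S^3$ is the lift of a dual arc for $c$ with respect to the unknot $\mathcal W$. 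By Proposition~\ref{signedmontesinosthm} applied to the crossing $c$ in $\mathcal V$ (changing it yields $\mathcal W$, and $\det(\mathcal V)=1$), $S^3=\Sigma(\mathcal V)=S^3_{\pm1/2}(J_0)$; since a nontrivial surgery on a nontrivial knot in $S^3$ never yields $S^3$ (Gordon--Luecke; alternatively a correction-term comparison can be run here), $J_0$ is the unknot. Consequently $J$ is obtained from the unknot $J_0$ by one twist along each of $\widetilde C_1,\widetilde C_2$, and --- assuming, as I expect, that the lifted crossing-disk annulus is untwisted, so that $\operatorname{lk}(\widetilde C_1,\widetilde C_2)=0$ --- the two circles are isotopic rel $J_0$ and $J$ is precisely the $n=2$ member of the twist family of the unknot $J_0$ obtained by twisting along a single unknot $c_\ast$ with $\operatorname{lk}(c_\ast,J_0)=\omega$.

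The two conclusions are then bookkeeping. For the determinant: re-present $\Sigma(K)$ as surgery on the link $J_0\cup\widetilde C_1\cup\widetilde C_2\subset S^3$, with $J_0$ carrying the half-integer coefficient descending from $-\epsilon d/2$ and $\widetilde C_1,\widetilde C_2$ carrying integral coefficients $\pm1$ whose common sign records the sign of $c'$. Since $J_0$ is unknotted, the order of $H_1(\Sigma(K))$ is computed from the associated linking matrix, and because $\omega=\operatorname{lk}(\widetilde C_i,J_0)$ occurs quadratically this order comes out as $\lvert\,{\pm1}\pm 4\omega^2\,\rvert$, i.e.\ $4\omega^2\pm1$; unravelling the two signs (the numerator $-\epsilon d$ together with the sign of $c'$, using Proposition~\ref{prop-basic-obstructions}(2), by which opposite-sign crossings force $\sigma(K)=0$) gives $4\omega^2+1$ in the opposite-sign case and $4\omega^2-1$ in the same-sign case, and equating with $\det(K)$ yields the formula (the case $\omega=0$ degenerating to $\det(K)=1$). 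For the Alexander polynomial, apply Proposition~\ref{prop:bakermotegi} to the twist family of the unknot $J_0$ along $c_\ast$: comparing the $n=2$ member $J$ with the $n=0$ member $J_0$ gives, up to units, $\tfrac{t^\omega-1}{t-1}\,\Delta_J(t)\doteq\Delta_{J_0\cup c_\ast}\!\left(t,\,t^{2\omega}\right)$ and $\tfrac{t^\omega-1}{t-1}\doteq\Delta_{J_0\cup c_\ast}(t,1)$. At $z$ with $z^\omega=-1$ the factor $\tfrac{z^\omega-1}{z-1}=\tfrac{-2}{z-1}$ is a unit and $z^{2\omega}=1$, so $\Delta_J(z)\doteq\tfrac{z-1}{z^\omega-1}\,\Delta_{J_0\cup c_\ast}(z,1)\doteq1$, which in the symmetric normalisation is $\Delta_J(z)=1$, as asserted.

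I expect the main obstacle to be the second paragraph: faithfully tracking the lift $\widetilde C$, confirming that the lifted surgery really does realise the change at $c'$ and carries $J$ to a dual-arc lift for the unknot $\mathcal W$, and --- the genuinely delicate point --- computing the framing of the lifted crossing-disk annulus, since it is exactly the vanishing of $\operatorname{lk}(\widetilde C_1,\widetilde C_2)$ (equivalently the parity ``$n=2$'') that makes the Baker--Motegi evaluation collapse to $1$ rather than to $\Delta_{J_0\cup c_\ast}(z,-1)$; this is the place where the two-sheeted nature of the branched cover has to be used honestly. The surgery-to-$S^3$ input identifying $J_0$ with the unknot and the $\bmod$-$4$ sign analysis pinning down $4\omega^2+1$ versus $4\omega^2-1$ should be comparatively routine.
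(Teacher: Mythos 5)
Your reduction of the second crossing change to the lift of its crossing \emph{circle} is where the argument breaks, and the problem is more serious than the framing verification you defer. First, the step you flag --- that the lifted crossing-disk annulus is untwisted, so that $\widetilde C_1,\widetilde C_2$ are $0$-framed parallels and one twist along each is the $n=2$ member of a twist family along a single unknot --- is precisely the content that would need proof, and nothing in your setup forces it: writing $e=\operatorname{lk}(\widetilde C_1,\widetilde C_2)$ and $f=\mp1$ for the blow-down coefficient downstairs, a transfer-of-framing computation gives lifted coefficients $f-e$ on each component, and the requirement that the surgery produce $S^3=\Sigma(\mathcal W)$ only pins $e$ to one of two values ($e=0$ or $e=f$); both are compatible with the half-integral Montesinos slope on the lift of the crossing arc, and $e\neq0$ genuinely occurs. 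Second, and decisively, the number you call $\omega$, namely $\operatorname{lk}(\widetilde C_i,J)$ computed in $\Sigma(\mathcal U)$, is not the $\omega$ of the theorem. Test the trefoil: $K=3_1$ as the closure of $\sigma_1^3$, with adjacency crossings $c=x_1$, $c'=x_2$. In $\mathcal U$, the closure of $\sigma_1^{-1}\sigma_1\sigma_1$, one can cyclically rotate and cancel $x_3$ against $\bar x_1$ away from $c'$, leaving a one-crossing kink whose crossing is $c'$; that crossing is nugatory, so $C'$ bounds a disk disjoint from $\mathcal U$, its two lifts form a split unlink in $\Sigma(\mathcal U)$, your $\omega$ equals $0$ and your twisting does nothing to $J$ --- yet $\det(3_1)=3$ forces $\omega=\pm1$ in the theorem. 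Your three-component linking-matrix count would return $4\cdot0^2\pm1=1\neq3$, and the root-of-unity evaluation would be attached to the wrong parameter. (If instead you lift $C'$ into $\Sigma(\mathcal W)$, where its linking with $J_0$ \emph{is} the correct $\omega$, the same example forces the annulus framing on that side to be $\mp1$, so the ``two $0$-framed parallel circles $=$ one unknot twisted twice'' identification fails there; also note you never verify that $c_*$ is unknotted, which Proposition \ref{prop:bakermotegi} requires.)

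The data the theorem actually needs lives in the branched double cover of the unknot obtained by changing \emph{both} crossings: the paper lifts the two crossing \emph{arcs} to a link $L_1\cup L_2$ there. Proposition \ref{signedmontesinosthm} together with Gordon--Luecke shows each $L_i$ is an unknot carrying surgery coefficient $\pm\frac12$; the determinant is then the determinant of the associated linking matrix with off-diagonal entries $2\omega$, where $\omega=\operatorname{lk}(L_1,L_2)$; and the $n=2$ in Proposition \ref{prop:bakermotegi} (hence the evaluation at $(2\omega)^{\text{th}}$ roots of unity) comes for free from the denominator $2$ of the half-integral coefficient, with Torres's formula supplying $\Delta_{L_1\cup L_2}(t,1)\doteq\frac{t^\omega-1}{t-1}$ because each $L_i$ is unknotted. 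Your Gordon--Luecke identification of $J_0$ with the unknot and your final Torres/Baker--Motegi evaluation parallel the paper and are fine; it is the passage through the lifted crossing circle in $\Sigma(\mathcal U)$, and the identification of its linking and framing data with the theorem's $\omega$, that cannot be carried out as stated.
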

\endgroup

\begin{proof}
Suppose an oriented knot $K$ in $S^3$ is $2$-adjacent via crossings $c_1, c_2$. For each crossing $c_i$, we find a disjoint crossing arc $\gamma_i$. After changing crossing $c_i$, we have a diagram of the unknot together with two disjoint arcs $\zeta^i_1,\zeta^i_2$. If we change both crossings at the same time, we have another diagram of the unknot with arcs $\eta_1,\eta_2$ corresponding to $\gamma_1,\gamma_2$ in our original diagram of $K$.

\[
\xymatrix{
 & \zeta^1_1, \zeta^1_2  \ar[dr]^{c_2}& \\
 \gamma_1, \gamma_2  \ar[ur]^{c_1}  \ar[dr]_{c_2}& & \eta_1, \eta_2 \\
& \zeta^2_1, \zeta^2_2  \ar[ur]_{c_1} &
}
\]

In particular, $\eta_1 \cup \eta_2$ lift to a link $L_1\cup L_2$ in $\Sigma(\mathcal{U}) \cong S^3$ that admits a $(\frac{p_1}{q_1}, \frac{p_2}{q_2})$--surgery to $\Sigma(K)$, and $\zeta^i_i$ lifts to a knot which is the image of $L_i$ after surgery along $L_{j}, j\not= i$. We will refer to the lift of $\zeta^1_1$ as $J_1$, and the lift of $\zeta^2_2$ as $J_2$. The branched double cover $\Sigma(K)$ can then be obtained by $\frac{r_i}{s_i}$--surgery along either $J_i$. By Proposition \ref{signedmontesinosthm}, $\frac{r_i}{s_i} \in \{ + \frac{d}{2}, - \frac{d}{2}\}$, where $d=\operatorname{det}(K)$, and $q_i=2$. Since both $c_1$ and $c_2$ are unknotting crossings, after changing either one, we have an unknot with branched double cover $S^3$.

\[
\xymatrix{
 & \Sigma(\mathcal{U})  \cong  S^3 \supset J_1 \ar[dl]_{\hspace{-1.75cm} \frac{r_1}{2}-\text{surgery on } J_1} & \\
\Sigma(K) & & \Sigma(\mathcal{U}) \cong S^3 \supset L_1\cup L_2 \ar[ul]_{\hspace{1cm} \frac{p_2}{2}-\text{surgery on } L_2}  \ar[dl]^{\hspace{1cm} \frac{p_1}{2}-\text{surgery on } L_1} \\
  & \Sigma(\mathcal{U})  \cong  S^3 \supset J_2  \ar[ul]^{\hspace{-1.25cm} \frac{r_2}{2}-\text{surgery on } J_2} &
}
\]
Going from right to left in the diagram above, we can see that $\frac{p_i}{2}$-surgery on $J_i$ takes us from $S^3$ to $S^3$. By a classical result of Gordon and Luecke \cite{Gordon1989KnotsAD}, no non-trivial surgery on a non-trivial knot in $S^3$ yields $S^3$, and thus we must have performed surgery on an unknot (which has determinant $d'=1$). Therefore by Proposition \ref{signedmontesinosthm},  $\frac{p_1}{q_1}, \frac{p_2}{q_2} \in \{ + \frac{1}{2}, - \frac{1}{2}\}$.

Proposition \ref{signedmontesinosthm} implies that the sign of the surgery coefficient for $J_i$ is determined by the sign of the crossing $c_i$ and the signature of $K$. In particular:
\begin{itemize}
	\item If $c_1, c_2$ have the same sign, Proposition \ref{signedmontesinosthm} implies that $\frac{r_1}{2}, \frac{r_2}{2}$ have the same sign.
	\item If $c_1, c_2$ have different signs, Proposition \ref{signedmontesinosthm} implies that $\frac{r_1}{2}, \frac{r_2}{2}$ have different signs.
\end{itemize}
 
If $\frac{r_1}{2}, \frac{r_2}{2}$ have the same sign, then $\frac{p_1}{2}, \frac{p_2}{2}$ must also have the same sign (otherwise one obtains different orientations on $S^3$). If $\frac{r_1}{2}, \frac{r_2}{2}$ have different signs, $\frac{p_1}{2}, \frac{p_2}{2}$ must have different signs. 

Thus if $c_1$, $c_2$ have the same sign, $\frac{p_1}{2},  \frac{p_2}{2}$ are the same sign, and if $c_1$, $c_2$ have different signs, $\frac{p_1}{2},  \frac{p_2}{2}$ have different signs.

The determinant of $K$, $\operatorname{det}(K)$, is also the determinant of the linking matrix for $L_1\cup L_2$. Using this, and the fact that $\frac{p_i}{2} \in \{ +\frac{1}{2}, -\frac{1}{2} \}$, we have
    \[
        \operatorname{det}(K) = \operatorname{det}\left(\left[\begin{array}{cc}
        \pm 1 & 2\omega \\
        2\omega & - 1
        \end{array}\right]\right),
    \]
 where (up to mirroring of $K$), the first entry is $-1$ if the signs of $c_1, c_2$ are the same and $+1$ if the signs of $c_1, c_2$ are different.

Since the Alexander polynomial of each component $L_i$ is trivial, \eqref{torresformula} gives us:
\begin{equation}\label{ourtorres}
    \Delta_{L_1 \cup L_2}(t, 1) \doteq \frac{t^\omega-1}{t-1}
\end{equation}

Now we can apply Proposition \ref{prop:bakermotegi} to our $L_1\cup L_2$, where $\kappa=L_1$ and $c=L_2$. We set $n=2$, and thus $\kappa_2$ is the image of $L_1$ after $(-\frac{1}{2})$-surgery on $L_2$ (and likewise, $\kappa_2\cup c_2$ is the image of $L_1\cup L_2$ after $(-\frac{1}{2})$-surgery on $L_2$). Then Proposition $\ref{prop:bakermotegi}$ together with \eqref{ourtorres} tell us that

\begin{equation}\label{torresobst}
\frac{t^\omega-1}{t-1} \Delta_{\kappa_2}(t)  \doteq \Delta_{L_1 \cup L_2}\left(t, t^{2 \omega}\right) .
\end{equation}

Our obstruction relies on the following:

When $t$ is a $(2\omega)^\text{th}$ root of unity $z=e^{2k\pi i/2\omega}$ where $\omega\not=0$ and $k$ is odd, then $z^\omega -1\not= 0$ and $z-1 \not= 0$, and we have
\begin{align*}
    \frac{z^\omega-1}{z-1}\Delta_{\kappa_2}(z) &= \Delta_{L_1 \cup L_2}(z,z^{2\omega}) \\
    &= \Delta_{L_1 \cup L_2}(z,1) \\
    &= \Delta_{\mathcal{U}}(z,1)\\
    &= \frac{z^\omega-1}{z-1} \Delta_\mathcal{U}(z) \\
    &= \frac{z^\omega-1}{z-1}.
\end{align*}

Therefore, for any $z=e^{((2\ell+1)\pi i)/\omega}$ and $\ell\in\mathbb{Z}$, $\Delta_{\kappa_2}(z)=1$.

\end{proof}

As noted above, we will use Theorem \ref{thm-newobst} in order to obstruct $11a_{255}$, $12a_{358}$, $12n_{620}$, and $12n_{656}$ from being $2$-adjacent. These knots are all known to have unknotting number equal to one, so for each knot, we can find a diagram with at least one unknotting crossing. This allows us to explicitly find a $J$ (and thus $\Delta_J(t))$ such that $S^3_{\pm \frac{d}{2}}(J)=\Sigma(K)$. For an example, see Figure \ref{fig.11a255montesinos}. Applying Theorem \ref{thm-newobst} to $\Delta_J(t)$ does not immediately obstruct $2$-adjacency, since the corresponding unknotting crossing is not necessarily a member of a $2$-adjacency set. However, we do know that a lift $J'$ of an arc $\zeta$ coming from an unknotting crossing in a $2$-adjacency set must have the property that $S^3_{\pm \frac{d}{2}}(J')=\Sigma(K)$. In Section \ref{rulingoutalexpolys}, we will show how to leverage an explicit $J$ (up to mirroring) to find all possible Alexander polynomials $\Delta_{J'}(t)$ such that $S^3_{\pm \frac{d}{2}}(J')=\Sigma(K)$. Then, applying Theorem \ref{thm-newobst} to all such $\Delta_{J'}(t)$ can rule out $2$-adjacency for a specific knot.

The unknotting number of $12n_{586}$ is unknown, however we found a rational tangle replacement to the unknot. In Section \ref{sec-12n586}, we use a more general version of the Montesinos trick to find the $d$-invariants for $\Sigma(12n_{586})$, and then work backwards to find possible Alexander polynomials for a lift $J'$, under the assumption that $\Sigma(12n_{586})$ is obtained by half-integral surgery on $J'$. Once we have found these Alexander polynomials, we can apply Theorem \ref{thm-newobst}.

\section{Ruling out \texorpdfstring{$\Delta_J(t)$}{DJ} via \texorpdfstring{$d$}{d}-invariants when \texorpdfstring{$\Sigma(K)$}{Sigma(K)} is an \texorpdfstring{$L$}{L}-space}\label{rulingoutalexpolys}

In this section, for a knot $K$ with certain properties, we discuss how to find all possible Alexander polynomials $\Delta_{J'}(t)$  of a knot $J'$ such that $S^3_{\frac{d}{2}}(J')=\Sigma(K)$. In particular, we use this in conjunction with Theorem \ref{thm-newobst} to obstruct $2$-adjacency for the knots $11a_{255}$, $12a_{358}$, $12n_{620}$, and $12n_{656}$ in order to prove Theorem \ref{thm2adjknots}. First, we find the lift $J$ of an unknotting arc in $K$, and use the Alexander polynomial $\Delta_J(t)$ together with $d$-invariants from Heegaard Floer homology to find other possible $\Delta_{J'}(t)$ such that $S^3_{\frac{d}{2}}(J')=\Sigma(K)$. Once we achieve this, we can apply Theorem \ref{thm-newobst} to all such $\Delta_{J'}(t)$ in order to obstruct $2$-adjacency. In Section \ref{sectionbackground} we defined Heegaard Floer $L$-spaces. In order to obstruct $11a_{255}, 12a_{358}, 12n_{620}$, $12n_{656}$, and $12n_{586}$ from being $2$-adjacent, we rely on the fact that their double-branched covers are $L$-spaces.

\begin{lemma}
    For $K\in \{ 11a_{255}, 12a_{358}, 12n_{620}, 12n_{656}, 12n_{586}\},\  \Sigma(K)$ is an $L$-space.
\end{lemma}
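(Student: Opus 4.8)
The plan is to verify directly that each of the five branched double covers $\Sigma(K)$ for $K \in \{11a_{255}, 12a_{358}, 12n_{620}, 12n_{656}, 12n_{586}\}$ is an $L$-space, by identifying each as a Seifert fibered space (or, more specifically, as surgery on a torus knot or a Montesinos knot's cover) for which the $L$-space property is already known. Recall that each of these knots is either known to have unknotting number one or admits a rational tangle replacement to the unknot; by the Montesinos trick (Proposition \ref{signedmontesinosthm}, or its generalization used in Section \ref{sec-12n586}), $\Sigma(K)$ is realized as $\frac{p}{q}$-surgery on a knot $J \subset S^3$, with $|p| = \det(K)$. The first step is to compute $\det(K)$ for each of the five knots from the Alexander polynomial via $\det(K) = |\Delta_K(-1)|$, and to identify the surgery description of $\Sigma(K)$ explicitly (for instance, using KnotInfo data and the explicit unknotting/tangle diagrams, as in Figure \ref{fig.11a255montesinos}).

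Next I would recognize each $\Sigma(K)$ in a standard form. Since these are small-crossing knots, their branched double covers are Seifert fibered spaces with base $S^2$ and at most three exceptional fibers — in fact all are Montesinos knots or closely related, so $\Sigma(K)$ is a Seifert fibered rational homology sphere. For such spaces, there is a clean criterion: a Seifert fibered rational homology sphere over $S^2$ is an $L$-space if and only if it does \emph{not} admit a transverse (horizontal) foliation, equivalently if and only if it carries no coorientable taut foliation (by the theorem of Lisca–Stipsicz, building on Ozsv\'ath–Szab\'o and Eliashberg–Thurston). More concretely, for Brieskorn spheres and small Seifert fibered spaces there are explicit arithmetic conditions on the Seifert invariants. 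The plan is therefore: (i) write down the Seifert invariants of each $\Sigma(K)$; (ii) apply the arithmetic $L$-space criterion for small Seifert fibered spaces. Alternatively, and perhaps more efficiently, since each $\Sigma(K)$ is surgery on a torus knot or a cable (this should fall out of the Montesinos description once simplified), one can invoke the classification of $L$-space surgeries on torus knots: $S^3_{p/q}(T_{a,b})$ is an $L$-space precisely when $p/q \geq 2g(T_{a,b}) - 1 = ab - a - b$, which is straightforward to check once the relevant $p/q$ and torus knot are pinned down.

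The step I expect to be the main obstacle is the identification of $\Sigma(K)$ in a usable standard form — that is, correctly reducing the Montesinos/surgery description of each branched double cover to a Seifert fibered presentation or a torus-knot surgery. This is a concrete but error-prone computation (tracking the tangle decomposition of each knot, converting to a plumbing or continued-fraction description of $\Sigma(K)$, and normalizing the Seifert invariants), and it must be done separately for all five knots, with $12n_{586}$ requiring extra care because its tangle replacement is only to the unknot rather than via an honest crossing change. Once the standard form is in hand, verifying the $L$-space condition is routine: either the numerical Seifert-invariant test, or checking the surgery slope against $2g-1$ for the relevant torus knot. I would organize the proof as a short table listing, for each $K$: $\det(K)$, the surgery/Seifert description of $\Sigma(K)$, and the one-line justification that it is an $L$-space, and then remark that the $L$-space property is what licenses the $d$-invariant computations of Section \ref{rulingoutalexpolys}.
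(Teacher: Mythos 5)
There is a genuine gap, and it sits exactly where you flagged "the main obstacle": your argument needs each $\Sigma(K)$ to be a Seifert fibered space over $S^2$ (equivalently, for prime knots, it needs $K$ to be a Montesinos knot), or else to be realized as a surgery on a torus knot, and neither assumption is justified for these five knots. Nothing in the hypotheses forces $11a_{255}$, $12a_{358}$, $12n_{620}$, $12n_{656}$, $12n_{586}$ to be Montesinos knots, and generic knots in this crossing range are hyperbolic with hyperbolic (in particular non--Seifert-fibered) branched double covers; in that case the Lisca--Stipsicz arithmetic criterion simply does not apply. The fallback that "each $\Sigma(K)$ is surgery on a torus knot or a cable" is likewise unsupported: the Montesinos trick only produces \emph{some} knot $J$ with $\Sigma(K)=S^3_{p/q}(J)$, and the lifts actually computed in this setting (e.g.\ the genus-$27$ lift for $11a_{255}$ and the genus-$53$ lift for $12a_{358}$) give no indication of being torus knots, so the criterion $p/q\geq 2g-1$ for torus-knot surgeries cannot be invoked. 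So the identification step is not merely "error-prone bookkeeping"; for these manifolds it is most likely impossible in the form you propose, and the proof strategy collapses at that point.

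The paper takes a much softer route: it computes that each of the five knots has thin Khovanov homology and then cites Ozsv\'ath--Szab\'o's result on branched double covers (the spectral sequence from reduced Khovanov homology of the mirror, with $\mathbb{Z}/2$ coefficients, to $\widehat{HF}(\Sigma(K))$), which gives $\operatorname{rk}\widehat{HF}(\Sigma(K))\leq \operatorname{rk}\widetilde{Kh}=\det(K)=|H_1(\Sigma(K))|$, forcing equality and hence the $L$-space condition. If you want to salvage your write-up, replacing the Seifert-fibered/torus-knot identification with this thinness criterion (or with quasi-alternatingness, when it can be verified) is the standard and essentially computation-free fix; no structural normal form for $\Sigma(K)$ is needed at all.
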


\begin{proof}
     Since these knots all have thin Khovanov homology (computed using the KnotTheory package in Mathematica), their double branched covers are $L$-spaces \cite{ozsvath-hfh-dbc}.
\end{proof}

If $\Sigma(K)$ is an $L$-space, we can find all possible Alexander polynomials for a knot $J$ with $S^3_{\pm \frac{d}{2}}(J)=\Sigma(K)$ using Heegaard Floer \textit{$d$-invariants}.

    \subsection{The Ni-Wu formula}\label{sec:niwufla}

    Ni and Wu proved a very useful formula for bounding the $d$-invariants of a manifold obtained by rational surgery on a knot $J$ in $S^3$ by relating them to $d$-invariants of lens spaces, with $\operatorname{Spin}^c$ structures indexed by $i$ as in Proposition \ref{dinvt-fla}.
    
        \begin{prop}[The Ni-Wu formula \cite{niwu}]\label{niwueq}
            Suppose $p,q>0$, and fix $0\leq i\leq p-1$. Then there is an ordering of the $\operatorname{Spin}^c$ structures $\mathfrak{t}_0,\mathfrak{t_1},\dots$ such that
            $$
            d\left(S_\frac{p}{q}^3(J), \mathfrak{t}_i\right)=d(L(p, q), i)-2 \max \left\{V_{\left\lfloor\frac{i}{q}\right\rfloor}, V_{\left\lfloor\frac{p+q-1-i}{q}\right\rfloor}\right\}.
            $$
        \end{prop}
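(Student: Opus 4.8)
The plan is to compute $HF^+(S^3_{p/q}(J),\mathfrak{t}_i)$ through the Ozsv\'ath--Szab\'o rational surgery mapping cone and extract the correction term from the resulting $U$-tower, following the strategy of Ni and Wu \cite{niwu}. First I would recall that for $p/q>0$ the group $HF^+$ of the surgered manifold, in a fixed $\operatorname{Spin}^c$ structure indexed by $i\in\mathbb{Z}/p\mathbb{Z}$, is the homology of a mapping cone $\mathbb{X}^+(i)$ of a chain map
$$D^+_i\colon \bigoplus_{s} A^+_{s} \longrightarrow \bigoplus_{s} B^+_{s},$$
where each $A^+_s$ is a large-surgery complex $C\{\max(a,b-s)\ge 0\}$ built from the knot Floer complex of $J$, each $B^+_s\simeq HF^+(S^3)$, and the components of $D^+_i$ are the vertical and horizontal maps $v^+_s\colon A^+_s\to B^+_s$ and $h^+_s\colon A^+_s\to B^+_{s+p}$. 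The set of indices $s$ appearing in the summand for $\mathfrak{t}_i$ is exactly the residue class of $i$ in the appropriate sense; this is the combinatorial input that will eventually produce the floors $\lfloor i/q\rfloor$ and $\lfloor(p+q-1-i)/q\rfloor$.

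Next I would isolate the integers $V_s,H_s\ge 0$ defined by the requirement that, on the non-torsion (tower) summand, $v^+_s$ and $h^+_s$ act as $U^{V_s}$ and $U^{H_s}$ respectively. The standard properties I will invoke are: $V_s$ is a non-increasing sequence of non-negative integers that vanishes for $s\gg 0$, the conjugation symmetry of knot Floer homology gives $H_s=V_{-s}$, and each $v^+_s,h^+_s$ is a quasi-isomorphism once $|s|$ is large. These facts let me truncate the a priori infinite mapping cone to a finite one without changing its homology in the relevant gradings.

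The core step is then a grading computation. Since $S^3_{p/q}(J)$ is a rational homology sphere, $HF^+(\mathbb{X}^+(i))$ contains a single $U$-tower per $\operatorname{Spin}^c$ structure, and $d$ is the bottom grading of that tower. I would argue that the tower is represented by a cycle supported in the $B^+$ summands, and that the maps $v^+,h^+$ cut the tower off from two sides: the surviving bottom element is pushed down by $U^{V_{\lfloor i/q\rfloor}}$ on the vertical side and by $U^{H_{s'}}=U^{V_{\lfloor(p+q-1-i)/q\rfloor}}$ on the horizontal side, so the net downward shift of the bottom of the tower is $2\max\{V_{\lfloor i/q\rfloor},V_{\lfloor(p+q-1-i)/q\rfloor}\}$ (the factor $2$ because $U$ drops grading by $2$, and the $\max$ because the tower survives only below whichever truncation is more restrictive).

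Finally I would calibrate the constant term by running the identical computation for the unknot $J=U$: there every $V_s=0$ and $S^3_{p/q}(U)=L(p,q)$, so the mapping cone computes $d(L(p,q),i)$ purely from the internal grading shifts, matching the canonical ordering of $\operatorname{Spin}^c$ structures from Proposition \ref{dinvt-fla}. Since replacing $U$ by $J$ alters only the exponents $V_s,H_s$ and not the underlying grading bookkeeping, subtracting the two computations yields the stated formula with the unknot contribution recorded as $d(L(p,q),i)$. I expect the main obstacle to be precisely this core step: matching the abstract index $s$ of the surviving tower generator to the explicit floors $\lfloor i/q\rfloor$ and $\lfloor(p+q-1-i)/q\rfloor$, and verifying that it is genuinely the \emph{maximum} (rather than a sum or minimum) of the two truncation exponents that controls the bottom of the tower. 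This requires a careful analysis of which maps $A^+_s\to B^+$ can hit the tower surjectively, together with a diagram chase in the truncated cone.
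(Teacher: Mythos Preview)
The paper does not prove this proposition at all: it is stated as a quoted result from Ni--Wu \cite{niwu} and used as a black box. Your sketch is a faithful outline of the original Ni--Wu argument via the Ozsv\'ath--Szab\'o rational surgery mapping cone, so there is nothing to compare against in this paper; if anything, you have supplied more than the paper does.
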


    The reader may refer to \cite{niwu} for an in-depth explanation of $V_i$, but for our purposes, we only need to know their relation to the Alexander polynomial of an $L$-space knot, and the facts that $V_i\geq V_{i+1}\geq V_i-1$ and $V_i\geq0$. We will use the Ni-Wu formula to find the $d$-invariants of $\Sigma(K)=S_{\frac{d}{2}}^3(J)$. We can assume a positive $\frac{d}{2}$ surgery (as required) up to mirroring.
    First, we can simplify the formula in Proposition \ref{dinvt-fla} to the relevant case, where $p=\det(K)=d$ and $q=2$. We have:

        \begin{equation} \label{lensspace-dinvts}
            d(L(d,2),i) = -\frac{2 d - (2 i + 1 - d - 2)^2}{8 d} - \frac{(-1)^i}{4}.
        \end{equation}
    
    Recall that for conjugate $\operatorname{Spin}^c$ structures $\mathfrak{t}, \overline{\mathfrak{t}}$, $d(Y,\mathfrak{t})=d(Y,\overline{\mathfrak{t}})$. Ni and Wu reference Ozsv\'ath and Szab\'o \cite{hfkandrational} to define their ordering on $\operatorname{Spin}^c$ structures, which in turn is the same ordering given in \cite{OzsvSz-UK}. Conjugate $\operatorname{Spin}^c$ structures in the lens space correspond to conjugate $\operatorname{Spin}^c$ structures in the surgered manifold. Also note that the determinant $d$ of a knot is odd. From the proof of Theorem 4.1 in \cite{OzsvSz-UK}, we can see that the $\operatorname{Spin}^c$ structure corresponding to $i$ is conjugate to the $\operatorname{Spin}^c$ structure corresponding to $d+1-i$ for $1\leq i\leq d$ (and therefore, calculating $d$-invariants for $i=1,\dots,\frac{d+1}{2}$ is sufficient). We can see that these $d$-invariants align as well:
    $$
    \begin{aligned}
        d(L(d,2),i) 
        &= -\frac{2 d - (2 i + 1 - d - 2)^2}{8 d} - \frac{(-1)^i}{4} \\
        &= -\frac{2 d - (2 (d+1-i) + 1 - d - 2)^2}{8 d} - \frac{(-1)^{d+1-i}}{4} \\
       &= d(L(d,2), d+1-i) .
    \end{aligned}
    $$     
        
    Next, we need to calculate the $V_i$'s using $\Delta_J(t)$. Since $\Sigma(K)$ is an $L$-space and can be described as surgery on a knot $J$, we call $J$ an $L$-space knot. When $J$ is an $L$-space knot, by \cite{hfkandrational},
        
        \begin{equation}\label{ti}
            V_i = \sum_{j\geq 1} ja_{i+j} ,
        \end{equation}
        
        \noindent where $a_i$ is the coefficient of $\Delta_J(t)$ when it is expressed in its symmetrized form
        $$
        \Delta_J(t) = a_0+\sum_{i=1}^g a_i(t^i+t^{-i}),
        $$ 
        where $g$ is the genus of $J$.
    
    For our purposes, we take a knot $K$ with unknotting number $u(K)=1$ and $\Sigma(K)$ an $L$-space. First we found the lift $J$ of a crossing arc in a diagram of the unknot, which when changed gets us back to $K$ as in Figure \ref{fig.11a255montesinos}. Then we uploaded the image to KnotFolio, found the DT code, and confirmed that $S^3_{\frac{d}{2}}(J)=\Sigma(K)$ in SnapPy. We then calculated $\Delta_J(t)$ in Sage. Table \ref{tab:results} shows these results for $11a_{255}$, $12a_{358}$, $12n_{620}$, and $12n_{656}$ using their respective determinants and specific unknotting arc used in our calculations. Using the Alexander polynomial of the lift $J$ of an unknotting crossing arc, we can compute the $V_i$'s and then use the Ni-Wu formula to obtain the $d$-invariants of $S^3_{ \frac{d}{2}}(J)$:
    
        \begin{equation}\label{niwueq2}
            d\left(S_{\frac{d}{2}}^3(J), \mathfrak{t}_i\right)=d(L(d,2), i)-2 V_{\left\lfloor\frac{i}{2}\right\rfloor}.
        \end{equation}
   
    Note that if any $d$-invariants of $S^3_{\frac{d}{2}}(J)=\Sigma(K)$ are congruent modulo $2$, then it is possible that there could be a different ordering on the $\operatorname{Spin}^c$ structures for a different surgery description of $\Sigma(K)$, thus altering some $V_i$ and producing a different Alexander polynomial. (This will be shown explicitly in Section \ref{11a255results}).

\setlength{\tabcolsep}{3pt}

\renewcommand{\arraystretch}{0.85}

\begin{longtable}{|>{\centering\arraybackslash}p{0.08\linewidth}|>{\centering\arraybackslash}p{0.4\linewidth}|>{\centering\arraybackslash}p{0.3\linewidth}|>{\centering\arraybackslash}p{0.12\linewidth}|>{\centering\arraybackslash}p{0.05\linewidth}|}

    \caption{The first column shows our potential $2$-adjacent knot $K$, the second column gives the DT code of the lifted arc $J$, the third column gives the Alexander polynomial of $J$, and the last column evaluates the Alexander polynomial at a $2\omega^\text{th}$ root of unity, confirming that $K$ is not $2$-adjacent.} \label{tab:results} \\
    \hline 
    {\scriptsize $K$} & {\scriptsize {\scriptsize DT Code for $J$}} & {\scriptsize $\Delta_J(t)$} & {\scriptsize $\Delta_J\left(e^{\frac{2\pi i}{2\omega}}\right)$} \\ \hline
    
    \hline
    {\scriptsize $11a_{255}$} & {\scriptsize [$20$, $-98$, $-76$, $54$, $-60$, $-104$, $36$, $62$, $86$, $-50$, $-72$, $96$, $2$, $-100$, $-4$, $78$, $56$, $12$, $106$, $110$, $16$, $-18$, $-112$, $90$, $68$, $-26$, $28$, $-80$, $34$, $102$, $-84$, $-108$, $-40$, $44$, $-116$, $94$, $-24$, $30$, $-6$, $32$, $-58$, $10$, $-42$, $-66$, $46$, $114$, $70$, $22$, $74$, $52$, $-8$, $-82$, $-14$, $38$, $-64$, $88$, $-48$, $92$]} & {\scriptsize $t^{-27} - t^{-26} + t^{-22} - t^{-21} + t^{-17} - t^{-16} + t^{-13} - t^{-12} + t^{-11} - t^{-10} + t^{-8} - t^{-7} + t^{-6} - t^{-5} + t^{-3} - t^{-2} + t^{-1} - 1 + t - t^2 + t^3 - t^5 + t^6 - t^7 + t^8 - t^{10} + t^{11} - t^{12} + t^{13} - t^{16} + t^{17} - t^{21} + t^{22} - t^{26} + t^{27}$} & {\scriptsize $-0.1+0.5i$} \\ \hline 
    {\scriptsize $12a_{358}$} & {\scriptsize [$22$, $-144$, $-122$, $-60$, $-170$, $64$, $126$, $200$, $-210$, $180$, $-162$, $-142$, $-114$, $56$, $88$, $4$, $-228$, $-14$, $-128$, $152$, $182$, $-160$, $-106$, $74$, $-242$, $-140$, $-112$, $26$, $168$, $-120$, $-192$, $-172$, $204$, $-206$, $236$, $216$, $44$, $48$, $136$, $52$, $220$, $222$, $54$, $24$, $166$, $118$, $6$, $-226$, $-12$, $-66$, $154$, $184$, $-158$, $-46$, $138$, $80$, $86$, $-28$, $90$, $-58$, $92$, $-194$, $-174$, $202$, $-208$, $234$, $214$, $-244$, $-50$, $110$, $78$, $164$, $-196$, $-176$, $-16$, $-130$, $-68$, $186$, $218$, $72$, $134$, $-2$, $-30$, $-116$, $-190$, $-224$, $96$, $34$, $198$, $-212$, $-132$, $-70$, $-238$, $82$, $84$, $8$, $94$, $32$, $-230$, $-148$, $-36$, $-98$, $100$, $38$, $150$, $232$, $-20$, $-42$, $-104$, $-240$, $-188$, $10$, $62$, $124$, $146$, $-178$, $-18$, $-40$, $-102$, $-156$, $108$, $76$]} & {\scriptsize $t^{-53} - t^{-52} + t^{-44} - t^{-43} + t^{-39} - t^{-38} + t^{-35} - t^{-34} + t^{-30} - t^{-29} + t^{-26} - t^{-25} + t^{-24} - t^{-23} + t^{-21} - t^{-20} + t^{-17} - t^{-16} + t^{-15} - t^{-14} + t^{-12} - t^{-11} + t^{-10} - t^{-9} + t^{-8} - t^{-7} + t^{-6} - t^{-5} + t^{-3} - t^{-2} + t^{-1}- 1 + t - t^2 + t^3 - t^5 + t^6 - t^7 + t^8 - t^9 + t^{10} - t^{11} + t^{12} - t^{14} + t^{15} - t^{16} + t^{17} - t^{20} + t^{21} - t^{23} + t^{24} - t^{25} + t^{26} - t^{29} + t^{30} - t^{34} +t^{35} - t^{38} + t^{39} - t^{43} + t^{44} - t^{52} + t^{53}$} & {\scriptsize $-0.1+0.4i$} \\ \hline
    {\scriptsize $12n_{620}$} & {\scriptsize [$18$, $52$, $76$, $-102$, $80$, $108$, $58$, $-34$, $-110$, $46$, $72$, $-98$, $-2$, $8$, $104$, $106$, $-12$, $-84$, $-112$, $64$, $-90$, $68$, $-94$, $24$, $74$,$ -100$, $-26$, $10$, $32$, $14$, $86$, $114$, $40$, $92$, $44$, $96$, $22$, $50$, $6$, $-28$, $-54$, $16$, $60$, $-36$, $116$, $66$, $-42$, $70$, $-20$, $-48$, $-4$, $78$, $30$, $-56$, $-82$, $88$, $62$, $-38$]} & {\scriptsize $t^{-26} - t^{-25} + t^{-21} - t^{-20} + t^{-17} - t^{-16} + t^{-13} - t^{-12} +  t^{-10} - t^{-9} + t^{-8} - t^{-7} + t^{-5} - t^{-4} + t^{-3} - t^{-2} +t^{-1} - 1 + t - t^2 + t^3 - t^4 + t^5 - t^7 + t^8 - t^9 + t^{10} - t^{12} + t^{13} - t^{16} + t^{17} - t^{20} + t^{21} - t^{25} + t^{26}$} & {\scriptsize $-0.1+0.5i$} \\ \hline
    {\scriptsize $12n_{656}$} & {\scriptsize [$8$, $-58$, $40$, $24$, $72$, $-32$, $34$, $50$, $-52$, $54$, $-56$, $2$, $-62$, $-76$, $64$, $-48$, $-68$, $18$, $-20$, $4$, $60$, $26$, $-28$, $66$, $10$, $-70$, $-36$, $38$, $-22$, $-6$, $42$, $-78$, $74$, $12$, $-14$, $16$, $-30$, $-46$, $44$]} & {\scriptsize $t^{-17} - t^{-16} + t^{-13} - t^{-12} + t^{-9} - t^{-8} + t^{-6} - t^{-5} + t^{-9} -  t^{-3} + t^{-2} - t^{-1} + 1 - t + t^2 - t^3 + t^4 - t^5 + t^6 - t^8 +  t^9 - t^{12} + t^{13} - t^{16} + t^{17}$} & {\scriptsize $-0.2 + 0.6i$} \\
    \hline
\end{longtable}

    \subsection{Results} \label{final5knots}

    Here we will obstruct the knots $11a_{255}$, $12a_{358}$, $12n_{620}$, and $12n_{656}$ from being $2$-adjacent. We will rigorously go through the obstruction for $11a_{255}$, but the same process applies to the other three knots. In Figure \ref{fig.11a255montesinos}, we show the steps for lifting the labeled unknotting crossing arc for $11a_{255}$. For the remaining knots, we will exclude the intermediate steps. After finding the $d$-invariants for $\Sigma(K)$, we check for any permutations of $\operatorname{Spin}^c$ structures that could yield different Alexander polynomials for a knot $J'$ such that $\Sigma(K)=S^3_{\frac{d}{2}}(J')$. For each $K\in \{11a_{255}, 12a_{358}, 12n_{620}, 12n_{656}\}$, we found that the corresponding $\Delta_J(t)$ was the only viable Alexander polynomial. Therefore we were able to obstruct these knots from being $2$-adjacent by applying Theorem \ref{thm-newobst}.

        \begin{figure}
            \centering
            \includegraphics[scale=.25]{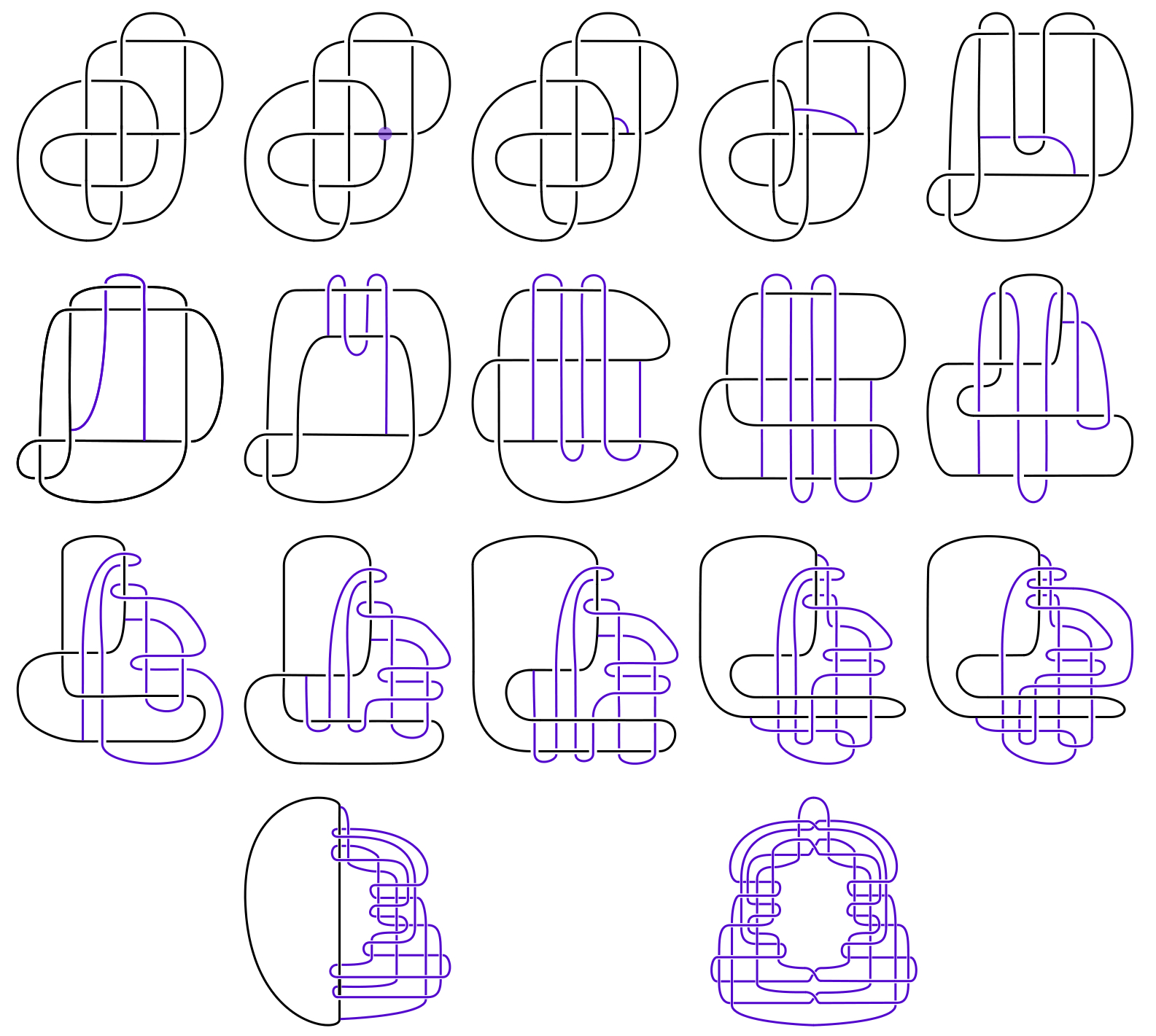}
            \caption{Finding the lift of an unknotting arc in $11a_{255}$. The top left figure is the original knot. To the right of that, we circle an unknotting crossing in purple, and in the next one, the marked crossing has been changed, recorded by a purple crossing arc. The following figures show the process of isotoping the black unknot until it looks like the standard unknot, while keeping track of the purple crossing arc until we reach the bottom left figure. The last image shows the lift $J$ of the arc in $\Sigma(K)$.}
            \label{fig.11a255montesinos}
        \end{figure}

\setlength{\tabcolsep}{5pt} 

\renewcommand{\arraystretch}{1.3} 
    
    \begin{table}[ht]
        \centering
        \begin{tabular}{|c|c|c|c|c|c|} \hline 
            Knot $K$ & $11a_{255}$ & $12a_{358}$  &  $12n_{620}$ & $12n_{656}$  & $12n_{586}$\\ \hline 
            Determinant & $143$ & $255$ & $143$ & $99$ & $101$\\ \hline 
            $\det(K)=4\omega^2 \pm 1$ &$4(6^2)-1$ & $4(8^2)-1$ & $4(6^2)-1$ & $4(5^2)-1$ & $4(5^2)+1$\\ \hline 
            Signature & 2 & -2 & -2 & 2 & 0 \\ \hline 
        \end{tabular}
        \caption{Determinant and signature for $11a_{255}$, $12a_{358}$, $12n_{620}$, $12n_{656}$, and $12n_{586}$. If $\det(K)=4\omega^2-1$, and $K$ is $2$-adjacent, then the crossings in the $2$-adjacency set have the same sign. If $\det(K)=4\omega^2+1$, then one crossing in the $2$-adjacency set is positive, and the other is negative.}
        \label{tab:my_label}
    \end{table}

    \subsection{Note on crossing signs}

        For $\kappa\in \{11a_{255}, 12n_{656}\}$, $\sigma(\kappa)=2$, so by Proposition \ref{basic-obst-knots}, and Proposition \ref{signedmontesinosthm}, the crossings in the $2$-adjacency set (if it exists), must be negative, and there must be a positive $\frac{\det(\kappa)}{2}$-surgery to $\Sigma(\kappa)$. 
    
        Similarly, for $\lambda\in \{12a_{358}, 12n_{656}\}$, $\sigma(\lambda)= -2$, so the $2$-adjacency set would have to contain two positive crossings, and $\Sigma(\lambda)=S^3_{-\frac{\det(\lambda)}{2}}(\lambda)$. In this case, $\Sigma(\lambda)= -S^3_{\frac{\det(\lambda)}{2}}(-\gamma)$, where $\gamma$ is the lift of the unknotting crossing arc. Therefore $\Sigma(-\lambda) = -\Sigma(\lambda) = S^3_{\frac{\det(\lambda)}{2}}(-\gamma)$. Since the Alexander polynomial of $\gamma$ is the same as that of $-\gamma$, we calculate the $V_i$'s and $d$-invariants for $\Sigma(-\lambda)$, and then find all possible Alexander polynomials of a knot $\gamma '$ such that $\Sigma(-\lambda) = S^3_{\frac{\det(\lambda)}{2}}(\gamma ')$. But then since the Alexander polynomial of $\gamma '$ is the same as that of $-\gamma '$, this calculation is sufficient.

        Therefore, our calculations obstruct $+\frac{\det(\kappa)}{2}$-surgery to $\Sigma(\kappa)$ when the crossings in the $2$-adjacency set must both be negative, and $+\frac{\det(\lambda)}{2}$-surgery to $\Sigma(-\lambda)$ when the crossings in the $2$-adjacency set would need to be positive.
    
        \subsection{\texorpdfstring{\bm{$11a_{255}$}}{11a255}} \label{11a255results}

        Note that the determinant of $11a_{255}$ is $143$, so we calculated the $d$-invariants $d(L(143,2),i)$ of the lens space $L(143,2)$:
        
        $\scriptstyle
            \left(\frac{5041}{286}, \frac{4757}{286}, \frac{4761}{286}, \frac{4481}{286}, \frac{4489}{286}, \frac{383}{26}, \frac{325}{22}, \frac{3953}{286}, \frac{3969}{286}, \frac{3701}{286}, \frac{3721}{286}, \frac{3457}{286}, \frac{3481}{286}, \frac{3221}{286}, \frac{3249}{286}, \frac{2993}{286}, \frac{275}{26}, \frac{2773}{286}, \frac{2809}{286}, \frac{197}{22}, \frac{2601}{286}, \frac{2357}{286}, \frac{2401}{286},\right.\\  \left. \quad \scriptstyle \frac{2161}{286}, \frac{2209}{286}, \frac{1973}{286}, \frac{2025}{286}, \frac{163}{26}, \frac{1849}{286}, \frac{1621}{286}, \frac{1681}{286}, \frac{1457}{286}, \frac{117}{22}, \frac{1301}{286}, \frac{1369}{286}, \frac{1153}{286}, \frac{1225}{286}, \frac{1013}{286}, \frac{99}{26}, \frac{881}{286}, \frac{961}{286}, \frac{757}{286}, \frac{841}{286}, \frac{641}{286}, \frac{729}{286}, \frac{41}{22}, \frac{625}{286}, \frac{433}{286},\right.\\  \left. \quad \scriptstyle \frac{529}{286}, \frac{31}{26}, \frac{441}{286}, \frac{257}{286}, \frac{361}{286}, \frac{181}{286}, \frac{289}{286}, \frac{113}{286}, \frac{225}{286}, \frac{53}{286}, \frac{13}{22}, \frac{1}{286}, \frac{11}{26}, -\frac{43}{286}, \frac{81}{286}, -\frac{79}{286}, \frac{49}{286}, -\frac{107}{286}, \frac{25}{286}, -\frac{127}{286}, \frac{9}{286}, -\frac{139}{286}, \frac{1}{286}, -\frac{1}{2}  \right)$
        
        In Figure \ref{fig.11a255montesinos}, we show the steps for lifting the labeled unknotting crossing arc for $11a_{255}$. The last image in the figure shows the lift $J$ of the arc in $\Sigma(11a_{255})$. Once we found $J$, we uploaded the image to KnotFolio, found the DT code, and confirmed that $S^3_{\frac{143}{2}}(J)=\Sigma(11a_{255})$ in SnapPy. We then calculated $\Delta_J(t)$ in Sage. When $\Delta_J(t)$ is expressed in its symmetrized form
        $$
        \Delta_J(t) = a_0+\sum_{i=1}^g a_i(t^i+t^{-i}),
        $$ 
        
        the symmetrized Alexander coefficients $(a_0,a_1,a_2,\dots)$ for the lifted arc $J$ are:
        
        {\small $(-1, 1, -1, 1, 0, -1, 1, -1, 1, 0, -1, 1, -1, 1, 0, 0, -1, 1, 0, 0, 0, -1, 1, 0, 0, 0, -1, 1, 0, 0, \dots)$ }
        
        Using \ref{ti}, we calculate the $V_i$'s:
        
        $(9, 8, 8, 7, 7, 7, 6, 6, 5, 5, 5, 4, 4, 3, 3, 3, 3, 2, 2, 2, 2, 2, 1, 1, 1, 1, 1, 0, 0, \dots )$

        Putting these together, we calculate the $d$-invariants of $\Sigma(11a_{255})=S^3_{\frac{143}{2}}(J)$ using the Ni-Wu formula for the relevant case (\ref{niwueq2}):

        $\scriptstyle \left( -\frac{107}{286}, \frac{181}{286}, \frac{185}{286}, -\frac{95}{286}, -\frac{87}{286}, \frac{19}{26}, \frac{17}{22}, -\frac{51}{286}, -\frac{35}{286}, -\frac{303}{286}, -\frac{283}{286}, \frac{25}{286}, \frac{49}{286}, -\frac{211}{286}, -\frac{183}{286}, \frac{133}{286}, \frac{15}{26}, -\frac{87}{286}, -\frac{51}{286}, -\frac{23}{22}, -\frac{259}{286}, \frac{69}{286}, \frac{113}{286}, \right. \\ \left. \quad  -\frac{127}{286}, -\frac{79}{286}, \frac{257}{286}, \frac{309}{286}, \frac{7}{26}, \frac{133}{286}, -\frac{95}{286}, -\frac{35}{286}, -\frac{259}{286}, -\frac{15}{22}, \frac{157}{286}, \frac{225}{286}, \frac{9}{286}, \frac{81}{286}, -\frac{131}{286}, -\frac{5}{26}, -\frac{263}{286}, -\frac{183}{286}, -\frac{387}{286},  -\frac{303}{286}, \frac{69}{286}, \frac{157}{286}, -\frac{3}{22}, \right. \\ \left. \quad  \frac{53}{286}, -\frac{139}{286}, -\frac{43}{286}, -\frac{21}{26}, -\frac{131}{286}, -\frac{315}{286}, -\frac{211}{286}, \frac{181}{286}, \frac{289}{286}, \frac{113}{286}, \frac{225}{286}, \frac{53}{286}, \frac{13}{22}, \frac{1}{286}, \frac{11}{26}, -\frac{43}{286},  \frac{81}{286}, -\frac{79}{286}, \frac{49}{286}, -\frac{107}{286}, \frac{25}{286}, -\frac{127}{286}, \frac{9}{286}, -\frac{139}{286}, \right. \\ \left. \quad  \frac{1}{286}, -\frac{1}{2}\right)$
        
       Now we can see explicitly that the lens space $d$-invariant values for $i=1,\dots, 71$ come in pairs that agree modulo $2$. The index of the self-conjugate $\operatorname{Spin}^c$ structure is $i=72$, and from our calculations we observe it is unique modulo $2$. For example,
        
       $$d(L(143,2),9)\equiv d(L(143,2),31) \equiv \frac{537}{286} \pmod{2}$$
       
       Thus 
       \begin{equation*}
            d(S^3_{\frac{143}{2}}(J),\mathfrak{t}_{9})=\frac{3969}{286}-2 V_{4}
        \end{equation*}
        and
        \begin{equation*}
           d(S^3_{\frac{143}{2}}(J),\mathfrak{t}_{31})=\frac{1681}{286}-2 V_{15}
        \end{equation*}

        Thus, $\Delta_J(t)$ tells us that $V_{4}= 7$ and $V_{15}= 3$, but there is the possibility of the existence of some knot $J'$ where  $\Sigma(K)=S^3_{\frac{143}{2}}(J')$ and $V_{4}$ and $V_{15}$ are switched in $\Delta_{J'}(t)$, resulting in a different Alexander polynomial, i.e. $\Delta_J(t)\not=\Delta_{J'}(t)$. More than two of the lens space $d$-invariants could agree modulo $2$, and the torsion invariants could be permuted, yielding a different Alexander polynomial. Therefore we calculated all of the possible sequences of the $V_i$ in Mathematica, and for all four sequences in question, we found the $V_i$ had to be unique using the facts that $V_i\geq V_{i+1}\geq V_i-1$, and $V_i\geq 0$ for every $i$. 

        Since $\det(11a_{255})=143=4(6^2)- 1$, we evaluate $\Delta_J(t)$ at an odd $12^\text{th}$ root of unity, shown in Table \ref{tab:results}. Since $\Delta_J(t)$ did not evaluate to $1$, by Theorem \ref{thm-newobst}, $11a_{255}$ is not $2$-adjacent.

        \subsection{\texorpdfstring{$\bm{12n_{586}}$}{12n586}}\label{sec-12n586}

            \begin{figure}
                \centering
                \includegraphics[width=0.8\linewidth]{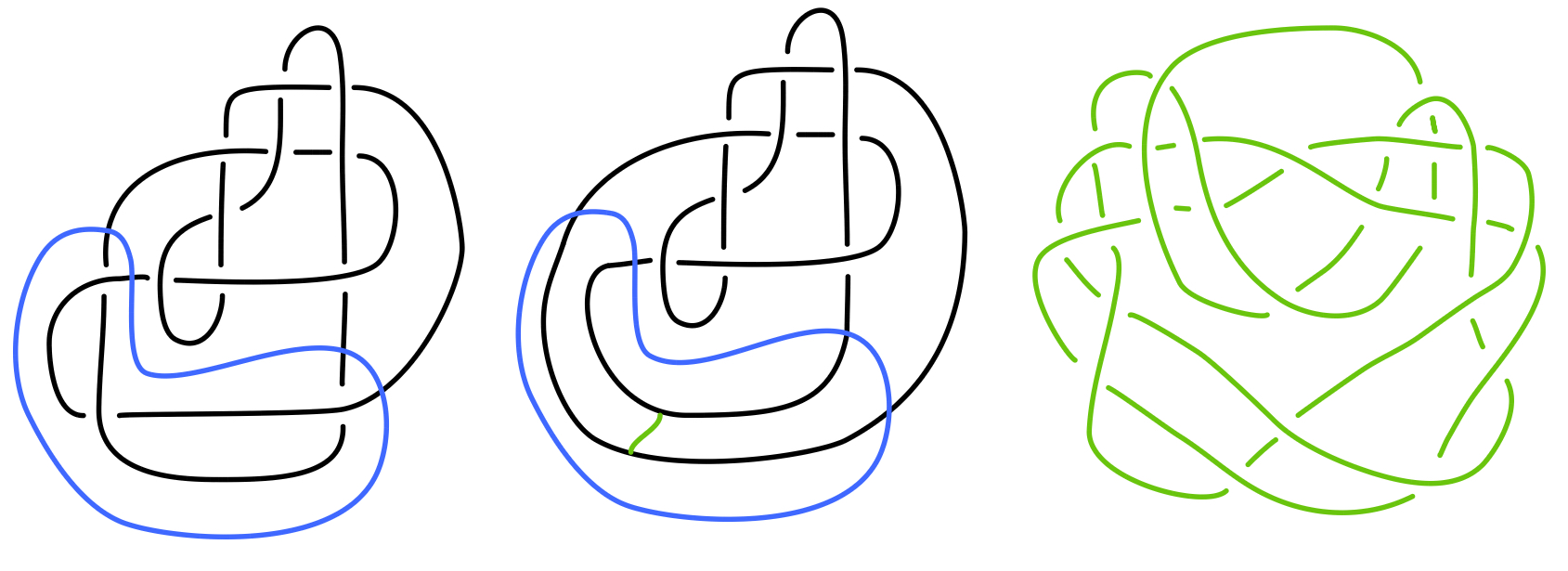}
                \caption{On the left, we see the knot $12n_{586}$ with a three half-twist tangle circled. In the middle diagram, we replace the circled tangle with an unknotted arc. On the right, we see the lift $J$ of the green arc from the middle diagram.}
                \label{fig:12n586tanglereplacement}
            \end{figure}
            
            For this particular knot, the unknotting number is unknown. According to KnotInfo \cite{knotinfo}, it is either $1$ or $2$. However, since its determinant is $101$, and therefore of the form $4\omega^2+1$, we know that if the knot is $2$-adjacent, the $2$-adjacency set of unknotting crossings must include crossings of both signs. Thus we can assume there exists a knot $J$ such that $S^3_{\frac{101}{2}}(J) \cong \Sigma(K)$ and do not need to consider a knot $J'$ such that $S^3_{-\frac{101}{2}}(J) \cong \Sigma(K)$. Obstructing positive $\frac{101}{2}$-surgery to $\Sigma(12n_{586})$ is sufficient. Since we cannot find a diagram with a single unknotting crossing, the previous strategy falls short. Instead, we used a more generalized version of the Montesinos trick, by finding a rational tangle replacement to the unknot. An unknotting crossing can be thought of as a rational tangle replacement of a full twist, which is why we know that the surgery on $J$ must be half-integral. We found an unknotting rational tangle replacement that is three half-twists, shown in Figure \ref{fig:12n586tanglereplacement}. This means that there is a knot $J$ such that, up to mirroring $12n_{586}$, $\Sigma(12n_{586})=S^3_{\frac{d}{3}}(J)$. We found this knot $J$ by lifting the arc in Figure \ref{fig:12n586tanglereplacement} (in the same way as in Figure \ref{fig.11a255montesinos}) and the result is shown in Figure \ref{fig:12n586tanglereplacement}.

            We uploaded the diagram on the right in Figure \ref{fig:12n586tanglereplacement} to KnotFolio and found that the Alexander polynomial of $J$ is $\Delta_J(t)=t^{-11}-t^{-10}+t^{-7}-t^{-6}+t^{-4}-t^{-3}+t^{-2}-t^{-1}+1-t+t^2-t^3+t^4-t^6+t^7-t^{10}+t^{11}$. We recovered the $V_i$'s in the same manner as in Section \ref{sec:niwufla}, using \ref{ti}. Then we calculated the $d$-invariants of $L(101,3)$. These were unique up to conjugation, so using the Ni-Wu formula \ref{niwueq},
            $$
            d(\Sigma(12n_{586}),\mathfrak{t_i})=d(L(101,3),i)-2V_{\lfloor\frac{i}{3}\rfloor}
            $$
            we were able to determine the $d$-invariants of $\Sigma(12n_{586})$. Now, if $12n_{586}$ is $2$-adjacent, it has unknotting number equal to one, and so it must be half-integral surgery on some other knot $J'$, i.e. $\Sigma(12n_{586})=S^3_{\frac{101}{2}}(J')$, and another application of the Ni-Wu formula gives us
            $$
            d(\Sigma(12n_{586}),\mathfrak{t_j})=d( S^3_{\frac{101}{2}}(J'),j)=d(L(101,2),j)-2V_{\lfloor \frac{j}{2} \rfloor}.
            $$
            Rearranging, we can calculate the $V_j$'s for this new surgery description of $\Sigma(12n_{586})$
            $$
            V_{\lfloor \frac{j}{2} \rfloor}=\frac{1}{2}(d(L(101,2),j)-d(\Sigma(12n_{586}),\mathfrak{t_j}) ).
            $$

\noindent The $d$-invariants of $L(101,2)$ are:

            $\scriptstyle \left( \frac{1250}{101}, \frac{1250}{101}, \frac{1150}{101}, \frac{1152}{101}, \frac{1054}{101}, \frac{1058}{101}, \frac{962}{101}, \frac{968}{101}, \frac{874}{101}, \frac{882}{101}, \frac{790}{101}, \frac{800}{101}, \frac{710}{101}, \frac{722}{101}, \frac{634}{101}, \frac{648}{101}, \frac{562}{101}, \frac{578}{101}, \frac{494}{101}, \frac{512}{101}, \frac{430}{101}, \frac{450}{101}, \frac{370}{101}, \frac{392}{101}, \frac{314}{101}, \frac{338}{101}, \right. \\ 
            \left. \quad \frac{262}{101}, \frac{288}{101}, \frac{214}{101}, \frac{242}{101}, \frac{170}{101}, \frac{200}{101}, \frac{130}{101}, \frac{162}{101}, \frac{94}{101}, \frac{128}{101}, \frac{62}{101}, \frac{98}{101}, \frac{34}{101}, \frac{72}{101}, \frac{10}{101}, \frac{50}{101}, -\frac{10}{101}, \frac{32}{101}, -\frac{26}{101}, \frac{18}{101}, -\frac{38}{101}, \frac{8}{101}, -\frac{46}{101}, \frac{2}{101}, \right. \\ 
            \left. \quad -\frac{50}{101}, 0\right)$.

            \noindent Therefore the $V_j$'s for $S^3_{\frac{101}{2}}(J')$ are:

            $$
            \left ( 4,4,4,3,3,2,2,2,2,2,1,1,1,1,1,1,0,\dots \right )
            $$

            \noindent Using 

            \begin{equation}\label{ti2}
                V_j = \sum_{k\geq 1} ka_{j+k} ,
            \end{equation}
        
            \noindent where $a_j$ is the coefficient of $\Delta_{J'}(t)$ when it is expressed in its symmetrized form
            $$
            \Delta_{J'}(t) = a_0+\sum_{j=1}^g a_j(t^j+t^{-j}),
            $$ 
            and $g$ is the genus of $J'$, we found that 
            $$
            \Delta_{J'}(t) = t^{-15} - t^{-14} + t^{-10} - t^{-9} + t^{-5} - t^{-4} + t^{-3} - t^{-2} + 1 - t^2 + 
            t^3 - t^4 + t^5 - t^9 + t^{10} - t^{14} + t^{15}
            $$

            Neither of these evaluated at the $(2\omega)^\text{th}$ roots of unity were equal to $1$. Thus by Theorem \ref{thm-newobst}, $12n_{586}$ cannot be $2$-adjacent.

\section{Cataloging the \texorpdfstring{$2$}{2}-adjacent knots}\label{section:catalogue}

We have confirmed or obstructed $2$-adjacency in all knots with $12$ or fewer crossings. In this section, we will provide the key ingredients to prove Theorem \ref{thm2adjknots}, restated here:

\begingroup
\begin{repeattheorem}
 The following knots are $2$-adjacent: $3_1$, $4_1$, $8_{17}$, $8_{21}$, $9_{44}$, $10_{88}$, $10_{136}$, $10_{156}$, $11a_{289}$, $11n_{84}$, $11n_{125}$, $12a_{1008}$, $12a_{1249}$, $12n_{275}$, $12n_{392}$, $12n_{464}$, $12n_{482}$, $12n_{483}$, $12n_{650}$, $12n_{831}$. No other knots with $12$ or less crossings are $2$-adjacent.
\end{repeattheorem}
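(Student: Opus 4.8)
The plan is to split the proof of Theorem~\ref{thm2adjknots} into two halves: exhibiting the $20$ listed knots as $2$-adjacent, and obstructing $2$-adjacency for every other knot through $12$ crossings. For the positive direction I would simply point to the appendix, where for each of the $20$ knots $3_1,4_1,8_{17},\dots,12n_{831}$ we give an explicit diagram with a marked pair of crossings whose every non-empty subset of crossing changes yields the unknot. Verifying each of these is a finite, diagram-level check: change $c_1$, change $c_2$, and change both, and confirm each resulting diagram is the unknot (via Reidemeister moves, or computationally via SnapPy/KnotFolio identification). This establishes the first sentence of the theorem.

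For the negative direction, the strategy is a sieve: start with the full census of prime knots with $\le 12$ crossings (KnotInfo), and successively discard knots using obstructions of increasing strength. First apply Proposition~\ref{prop-basic-obstructions}: remove every knot with unknotting number $\neq 1$, every knot with $\sigma\notin\{0,\pm2\}$, every rational knot other than $3_1,4_1$, and every knot whose Conway coefficient $a_2$ is neither $0$ nor $\pm1$ (and when $a_2=0$, whose $a_4$ is not a perfect square). Next apply the new determinant obstruction from Theorem~\ref{thm-newobst}: discard any remaining knot $K$ with $\det(K)\neq 4\omega^2\pm1$, and further use the sign refinement (comparing $\sigma(K)$ with whether $\det(K)\equiv\pm1$ in the form $4\omega^2\pm1$) together with Proposition~\ref{prop-basic-obstructions}(2) to eliminate knots whose signature is incompatible with the required crossing-sign configuration. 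One would also invoke Corollary~\ref{cor-mccoy} and the McCoy-type constraints, and Tao's polynomial restrictions on the Jones and HOMFLY-PT polynomials, to cut the list further. At this stage I expect the surviving candidates to be exactly the $20$ listed knots together with the five stubborn cases $11a_{255}$, $12a_{358}$, $12n_{620}$, $12n_{656}$, $12n_{586}$.

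The final and hardest step is disposing of those five knots, and this is where Section~\ref{rulingoutalexpolys} and Theorem~\ref{thm-newobst} do the real work. For each $K$ in this list, by the Lemma $\Sigma(K)$ is an $L$-space, so any knot $J'$ with $S^3_{\pm d/2}(J')=\Sigma(K)$ is an $L$-space knot; via the Ni--Wu formula (Proposition~\ref{niwueq}) and the lens-space $d$-invariant formula (Proposition~\ref{dinvt-fla}), the $d$-invariants of $\Sigma(K)$ together with the constraints $V_i\ge V_{i+1}\ge V_i-1$, $V_i\ge0$ pin down the sequence $\{V_i\}$ uniquely (after checking that the relevant lens-space $d$-invariants are distinct mod $2$, or enumerating the finitely many $\mathrm{Spin}^c$ reorderings when they are not), hence pin down $\Delta_{J'}(t)$. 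One computes an explicit $J$ by lifting an unknotting arc (or, for $12n_{586}$, a three-half-twist unknotting tangle replacement) as in Figures~\ref{fig.11a255montesinos} and \ref{fig:12n586tanglereplacement}, reads off $\Delta_J(t)$, confirms it is the unique admissible Alexander polynomial, and then evaluates it at the relevant odd $2\omega$-th root of unity $z=e^{(2\ell+1)\pi i/\omega}$; in every case the value is not $1$, so Theorem~\ref{thm-newobst} shows $K$ is not $2$-adjacent. The main obstacle is precisely the uniqueness argument for $\{V_i\}$ and the bookkeeping needed to be certain that \emph{no} surgery description $S^3_{\pm d/2}(J')=\Sigma(K)$ escapes the obstruction — handling the possible permutations of $\mathrm{Spin}^c$ structures when several $d$-invariants coincide mod $2$, and correctly tracking orientations and mirrors so that the sign of the surgery coefficient matches the forced sign of the $2$-adjacency crossings (the ``Note on crossing signs'' discussion). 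With those five knots eliminated, the sieve terminates exactly at the $20$ listed knots, completing the proof.
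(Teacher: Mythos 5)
Your proposal matches the paper's proof essentially step for step: the positive direction via the explicit appendix diagrams, the sieve using Proposition \ref{prop-basic-obstructions} together with the determinant condition of Theorem \ref{thm-newobst}, then Corollary \ref{cor-mccoy} and Tao's HOMFLY-PT/Conway restrictions (Propositions \ref{tao5.4}, \ref{tao5.2}, \ref{taoconway}), and finally the $d$-invariant/Ni--Wu argument of Section \ref{rulingoutalexpolys} (with the tangle-replacement variant for $12n_{586}$) to dispose of $11a_{255}$, $12a_{358}$, $12n_{620}$, $12n_{656}$, $12n_{586}$. This is the same decomposition and the same key ingredients as the paper, so no further comment is needed.
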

\endgroup

For each knot in Theorem \ref{thm2adjknots}, a diagram displaying the $2$-adjacency crossings is shown in Figure \ref{fig:2adj-knots} in the appendix.

In order to obstruct $2$-adjacency for the remaining knots, we will now prove a corollary of McCoy's (Theorem 4 in\cite{mccoy2014alternating}) that is also applicable to $2$-adjacent knots. Then we state some additional propositions due to Tao \cite{Tao1} to further obstruct $2$-adjacency.

We prove a corollary to a theorem of McCoy's (Theorem 4 in\cite{mccoy2014alternating}) that is also applicable to $2$-adjacent knots:

\begin{cor}\label{cor-mccoy}
    If $K$ is an alternating knot with positive and negative unknotting number $1$, then any minimal diagram of $K$ contains both a positive and negative unknotting crossing.
\end{cor}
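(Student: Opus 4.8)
The plan is to leverage McCoy's theorem (Theorem 4 in \cite{mccoy2014alternating}), which characterizes when an alternating knot admits an unknotting crossing of a given sign in terms of its minimal (alternating) diagrams: essentially, if an alternating knot $K$ has positive unknotting number one, then some minimal diagram of $K$ contains a positive unknotting crossing, and similarly for the negative case. The subtlety in our Corollary is that we need \emph{one and the same} minimal diagram to contain both a positive and a negative unknotting crossing simultaneously. So the first step is to recall the precise statement of McCoy's result and note that it gives us a minimal diagram $D_+$ with a positive unknotting crossing and (a priori different) minimal diagram $D_-$ with a negative unknotting crossing.

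The second step is to invoke the Tait flyping conjecture (proved by Menasco--Thistlethwaite): any two reduced alternating diagrams of the same alternating knot are related by a finite sequence of flypes. A flype is a local move that does not change the sign of any crossing (signs of crossings in an alternating diagram are determined by the checkerboard coloring / writhe data, which flypes preserve), and it induces a bijection between the crossings of $D_+$ and those of $D_-$ that respects both crossing signs and the property of being an unknotting crossing (changing a crossing and then flyping is the same as flyping and then changing the corresponding crossing). Therefore the existence of a positive unknotting crossing in $D_+$ transfers to \emph{every} minimal diagram, and likewise for the negative unknotting crossing in $D_-$. Hence any single minimal diagram of $K$ contains both.

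The main obstacle I expect is verifying that McCoy's theorem is stated (or can be cleanly re-stated) in exactly the form "has a positive unknotting crossing in \emph{a minimal diagram}" rather than merely "has positive unknotting number one" — i.e., pinning down that the unknotting crossing can be realized inside an alternating diagram, not just in some arbitrary diagram. If McCoy's statement already asserts this, the flyping argument closes the proof immediately; if it only asserts the weaker numerical statement, one needs the additional input (also due to McCoy, or to the combination of McCoy with the solution of the Tait conjectures) that signed unknotting number one for alternating knots is always witnessed in a reduced alternating diagram. The remaining steps — checking that flypes preserve crossing signs and commute with crossing changes on corresponding crossings — are routine and local, and I would only sketch them. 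Connecting this back to $2$-adjacency, the relevance is that by Proposition \ref{prop-basic-obstructions}(2), a $2$-adjacent knot whose $2$-adjacency set has crossings of opposite signs has both positive and negative unknotting number one and signature $0$, so if it is also alternating, Corollary \ref{cor-mccoy} forces its minimal diagram structure, giving a concrete obstruction to test against the small alternating knots.
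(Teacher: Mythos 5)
Your strategy works in outline, but it takes a detour that the paper avoids, and it leaves the one genuinely load-bearing step hedged. The uncertainty you flag as ``the main obstacle'' resolves favorably, and more strongly than your fallback: McCoy's Theorem 4 in \cite{mccoy2014alternating} is a statement about an \emph{arbitrary} reduced alternating diagram $D$ of $K$ (items \textit{i)} and \textit{iv)} tie ``$K$ can be unknotted by changing a negative crossing'' to ``$D$ displays a negative unknotting crossing''), so the entire second step of your plan --- transferring an unknotting crossing from one minimal diagram to all others via the Menasco--Thistlethwaite flyping theorem --- is unnecessary. The paper's proof simply takes an arbitrary reduced diagram $D$, notes $\sigma(K)=0$ by Proposition \ref{prop-basic-obstructions}(2), applies Theorem 4 to see a negative unknotting crossing in $D$, and then obtains the positive one by mirroring: $-K$ again has unknotting number one of both signs and signature $0$, so $-D$ displays a negative unknotting crossing, which is a positive unknotting crossing of $D$. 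Two points you should tighten if you pursue your route: first, the signed form of McCoy's theorem is stated relative to the signature, so you do need to record $\sigma(K)=0$ (your proposal never uses the signature at all); second, your flype-transfer bijection needs a little care at the crossing that is destroyed and recreated by the flype, whereas the mirroring argument sidesteps diagram moves entirely. What your approach would buy, if carried out, is a statement transferring a signed unknotting crossing from one minimal diagram to every other one without invoking the full strength of McCoy's diagrammatic theorem --- but since that strength is exactly what is being cited anyway, the paper's argument is shorter and cleaner.
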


\begin{proof}
    Assume that $K$ is an alternating knot with positive and negative unknotting number $1$. Then, $K$ will have signature $0$ by Proposition \ref{prop-basic-obstructions} (2). Let $D$ be an arbitrary reduced diagram for $K$. For this proof, we will use Theorem 4 from \cite{mccoy2014alternating}, specifically items \textit{i)} and \textit{iv)}. 
    Since $\sigma(K)=0$ and $K$ has negative unknotting number $1$, Theorem 4 tells us that $D$ displays a negative unknotting crossing. 

    Now, mirror $D$ to get $-D$. We now know that $-D$ has a positive unknotting crossing, since $D$ is known to have a negative one. Additionally, mirroring a knot negates its signature, so $\sigma(-D)=0$. Recall that $-D$ is a projection of $-K$, the mirror of $K$. By assumption, $K$ has positive unknotting number $1$ and negative unknotting number $1$, so $-K$ does as well.

    Knowing this, we may repeat the above logic on $-D$, a diagram of $-K$. Since $\sigma(-K)=0$ and $-K$ has negative unknotting number $1$, any minimal diagram of $-K$ must display a negative unknotting crossing, including $-D$. We have now shown that $-D$ has both a positive and negative unknotting crossing. Mirror back to obtain $D$ and we see that it also has an unknotting crossing of both signs.
\end{proof}

Tao describes restrictions on the HOMFLY-PT polynomial of $2$-adjacent knots in \cite{Tao1}.

\begin{prop} [\cite{Tao1}] \label{tao5.4}
    If $K$ is $2$-adjacent and the second coefficient of the Conway polynomial of $K$, $a_2$ is $\pm1$, then either $p_{0_K}(\ell) = \ell^{-4}+2\ell^{-2}$, $p_{0_K}(\ell) = \ell^{4}+2\ell^{2}$, or $p_{0_K}(\ell)=\ell^{-2}+1+\ell^2$.
\end{prop}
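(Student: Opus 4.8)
The plan is to leverage the structure theorem from Proposition \ref{prop-basic-obstructions}(4) together with the skein-theoretic behavior of the HOMFLY-PT polynomial under crossing changes. The key observation is that $2$-adjacency gives us a crossing $c_1$ (from the $2$-adjacency set) whose change yields the unknot, and moreover the crossing $c_2$ is \emph{simultaneously} an unknotting crossing both before and after changing $c_1$. First I would fix a $2$-adjacent diagram $D$ of $K$ with adjacency set $\{c_1, c_2\}$ and write the HOMFLY-PT skein relation $\ell P_{L_+} + \ell^{-1} P_{L_-} + m P_{L_0} = 0$ at the crossing $c_1$. Depending on the sign of $c_1$, one of $P_{L_+}, P_{L_-}$ is $P_K$ and the other is $P_\mathcal{U} = 1$; the oriented resolution $L_0$ is a two-component link $\mathcal{L}$ obtained by smoothing $c_1$ in $D$. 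This expresses $P_K(\ell, m)$ in terms of $P_\mathcal{L}(\ell, m)$, and hence $p_{0_K}(\ell)$ (the $m^0$-coefficient) in terms of low-order $m$-coefficients of $P_\mathcal{L}$.

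The next step is to pin down $P_\mathcal{L}$ enough to determine $p_{0_K}(\ell)$. Here the crucial input is that $c_2$ remains an unknotting crossing in $L_\pm$ and descends to a crossing in $\mathcal{L}$; combined with $a_2(K) = \pm 1$ and the Conway-HOMFLY-PT relations, this forces $\mathcal{L}$ to be (up to the relevant equivalence) one of a very short list of two-component links — morally a Hopf link or a split union of an unknot with a trefoil/figure-eight piece, matching the three cases $3_1$-like, $\overline{3_1}$-like, and $4_1$-like behavior. I would carry this out by: (i) using Tao's Conway-polynomial restrictions (Theorems 3.1, 3.2 of \cite{Tao1}, already cited) to constrain $\nabla_K$, hence $\nabla_\mathcal{L}$ via the skein triangle; (ii) using that the HOMFLY-PT polynomial of $K$ must be compatible with $K$ having unknotting number one and signature $0$ or $\pm 2$ (Proposition \ref{prop-basic-obstructions}); and (iii) matching the three surviving possibilities for $p_{0_K}(\ell)$ against the asserted list $\ell^{-4} + 2\ell^{-2}$, $\ell^4 + 2\ell^2$, $\ell^{-2} + 1 + \ell^2$. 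The mirror symmetry $P_{-K}(\ell, m) = P_K(\ell^{-1}, -m)$ explains why the first two are mirror images of each other, so it suffices to handle one sign of unknotting crossing and then mirror.

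The main obstacle I anticipate is step (i)–(ii): showing that the two-component link $\mathcal{L}$ arising as the oriented smoothing is actually constrained to the short list, rather than merely having constrained Conway polynomial. A priori many links share the same $\nabla$, so I would need to either (a) use the extra HOMFLY-PT data — the full two-variable polynomial of $\mathcal{L}$ is determined up to finitely many cases once its $m$-degree and $\ell$-span are bounded, and $2$-adjacency bounds these — or (b) argue more cleverly that only the $m^0$- and $m^1$-coefficients of $P_\mathcal{L}$ feed into $p_{0_K}(\ell)$, and these are controlled by $\mathrm{lk}$ of the two components (which the crossing-disk picture forces to be $0$ or $\pm 1$) together with $\nabla_\mathcal{L}$. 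Option (b) is the cleaner route: the linking number of the two strands at a crossing disk smoothing is $0$ when $c_1$ had a crossing disk, so $P_\mathcal{L}$ has no term of lowest $m$-degree $m^{-1}$, and its $m^0$-coefficient is a Laurent polynomial in $\ell$ whose possible values under the $a_2 = \pm 1$ hypothesis reduce, after the skein computation, exactly to the three stated forms. I would then verify the three cases are realized by $3_1$, $-3_1$, and $4_1$ respectively as a consistency check, completing the argument.
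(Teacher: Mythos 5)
The paper never proves Proposition \ref{tao5.4}; it is imported verbatim from Tao \cite{Tao1}, so there is no internal argument to compare yours against, and your proposal has to stand on its own. Its framework is right: applying the HOMFLY-PT skein relation at an unknotting crossing $c_1$, so that one of $L_\pm$ is $K$, the other is $\mathcal{U}$, and $L_0=\mathcal{L}$ is a two-component link, and then reading off the $m^0$-coefficient, is exactly the correct mechanism (and it shows the conclusion needs only $u(K)=1$ together with $a_2=\pm1$, not the second crossing $c_2$). But the step you lean on to finish is wrong. You assert that ``the linking number of the two strands at a crossing disk smoothing is $0$ \dots so $P_{\mathcal{L}}$ has no term of lowest $m$-degree $m^{-1}$.'' Both halves fail. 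The zero algebraic intersection number concerns the crossing circle versus $K$; it says nothing about the linking number $\lambda$ of the two components of the oriented smoothing. In fact the Conway skein relation $\nabla_{L_+}-\nabla_{L_-}=z\nabla_{L_0}$ applied at $c_1$ gives $a_2(L_+)-a_2(L_-)=\operatorname{lk}(L_0)$, i.e.\ $\lambda=\pm a_2(K)=\pm1$ under the hypothesis --- the linking number being forced to $\pm1$ is precisely the input that pins down $p_{0_K}$. Moreover, by the classical Lickorish--Millett computation of the extreme coefficient, the $m^{-1}$-coefficient of any two-component link equals $(\ell+\ell^{-1})$ times a signed power of $\ell$ determined by $\lambda$; in particular it \emph{never} vanishes, so ``no $m^{-1}$ term'' is impossible.

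Once this is corrected the proof is far shorter than your plan and does not require identifying $\mathcal{L}$ up to isotopy at all: comparing $m^0$-coefficients in $\ell P_{L_+}+\ell^{-1}P_{L_-}+mP_{L_0}=0$ with $\{L_+,L_-\}=\{K,\mathcal{U}\}$ expresses $p_{0_K}(\ell)$ in terms of $\lambda$ and the sign of the unknotting crossing, and substituting $\lambda=\pm1$ yields exactly the three stated Laurent polynomials (the first two are mirror to each other; the third occurs when the sign of $a_2$ and the crossing sign disagree; overall signs depend on the chosen HOMFLY-PT normalization). Your steps (i)--(ii) --- constraining the smoothed link $\mathcal{L}$ to a short list such as a Hopf link or a split union, via bounds on $m$-degree and $\ell$-span --- are both unnecessary and unjustified as written: many links satisfy those coarse constraints, and nothing in the proposal shows otherwise; only the $m^{-1}$-coefficient of $P_{\mathcal{L}}$ enters, and it depends on $\lambda$ alone. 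So, as written, the proposal contains a genuine error at its central computation (the $\lambda=0$ and vanishing-coefficient claims), although the skein-theoretic route it starts down is the one that works.
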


\begin{prop} [\cite{Tao1}] \label{tao5.2}
    If $K$ is $2$-adjacent and $a_2=0$, then $p_{2_K}'(i)=\pm2i\sqrt{a_4}$.
\end{prop}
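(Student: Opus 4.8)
The plan is to reproduce, in the conventions fixed above, the skein--theoretic computation of Tao \cite{Tao1}. Write $P_K(\ell,m)=\sum_i p_{i_K}(\ell)m^i$, and recall that in these conventions the Conway polynomial is the specialization $\nabla_K(z)=P_K(i,-iz)$; comparing coefficients of $z^0,z^2,z^4$ gives $p_{0_K}(i)=1$, $a_2=-p_{2_K}(i)$, and $a_4=p_{4_K}(i)$. Since a knot has $P_K$ supported on monomials $\ell^i m^j$ with $i,j$ even, we may write $p_{2_K}(\ell)=q(\ell^2)$ with $q\in\mathbb{Z}[x^{\pm1}]$, and the hypothesis $a_2=0$ says exactly $q(-1)=0$. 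Factoring $q(x)=(x+1)\,r(x)$ with $r\in\mathbb{Z}[x^{\pm1}]$ and differentiating $p_{2_K}(\ell)=(\ell^2+1)r(\ell^2)$ at $\ell=i$ annihilates the first summand, so $p_{2_K}'(i)=2i\,r(-1)$ with $r(-1)\in\mathbb{Z}$. Hence the assertion is equivalent to the single integrality identity $r(-1)^2=a_4$ -- in particular $\sqrt{a_4}\in\mathbb{Z}$, consistent with Proposition~\ref{prop-basic-obstructions}(4) -- and this is what I would set out to prove.

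Next I would interpret $r(-1)$ and $a_4$ through the resolution of one $2$-adjacency crossing. Let $c_1$ be such a crossing and $L_1$ its oriented resolution, a two-component link; applying the HOMFLY--PT skein relation at $c_1$ with $P_{\mathcal{U}}=1$ gives $P_K=-\ell^{\mp2}-\ell^{\mp1}mP_{L_1}$ (signs according to the sign of $c_1$), whence $p_{2_K}(\ell)=-\ell^{\mp1}p_{1,L_1}(\ell)$, where $p_{1,L_1}$ is the $m^1$-coefficient of $P_{L_1}$. Matching the Conway specialization at the link level identifies the $z^1$-coefficient of $\nabla_{L_1}$ with $\pm\operatorname{lk}(L_1)=\pm a_2=0$ and its $z^3$-coefficient -- the Sato--Levine invariant of the now algebraically split link $L_1$ -- with $\pm a_4$. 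Writing $p_{1,L_1}(\ell)=\ell\,s(\ell^2)$ (only odd powers of $\ell$ occur for a two-component link) with $s(-1)=\operatorname{lk}(L_1)=0$, so $s(x)=(x+1)\tilde s(x)$, one reads off $r(-1)=\pm\tilde s(-1)$. Thus, modulo formal bookkeeping, the proposition reduces to: for the two-component algebraically split link $L_1$ obtained by resolving a crossing of a $2$-adjacency set, the Sato--Levine invariant equals, up to sign, the perfect square $\tilde s(-1)^2$.

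The main obstacle -- and the reason $2$-adjacency, rather than mere unknotting number one, is required -- is precisely this squaring identity, which fails for general algebraically split links. My plan is to bring in the \emph{second} unknotting crossing $c_2$, which is also a crossing of $L_1$: changing $c_2$ turns $L_1$ into the resolution of $c_1$ in the unknot, and resolving $c_2$ produces a further link $N$ (resolve both crossings of $K$). Expanding $P_{L_1}$ once more via the skein relation at $c_2$ and tracking the $m^1$-coefficient and its $\ell$-derivative at $\ell=i$ through this two-fold expansion should express $\tilde s(-1)$ as a single linking number attached to the two crossing arcs $\gamma_1,\gamma_2$. I expect this to be, up to sign, the integer $\omega$ of Theorem~\ref{thm-newobst} -- whose linking matrix $\left[\begin{smallmatrix}\pm1&2\omega\\2\omega&-1\end{smallmatrix}\right]$ has determinant $\det(K)=4\omega^2\pm1$ -- so that $a_4=\omega^2$ falls out. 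The delicate point is controlling the contributions of $N$ and of the resolved-unknot link: these are not determined by $K$ alone, and the real content of the $2$-adjacency hypothesis is that their low-order HOMFLY--PT coefficients \emph{are} pinned down. Making that explicit is where the work lies; everything before it is manipulation of the skein relation together with the Conway/HOMFLY dictionary.
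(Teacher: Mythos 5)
The paper does not actually prove this proposition: it is quoted verbatim from Tao \cite{Tao1} (the label points to Tao's result), so the only thing to measure your attempt against is the content of the statement itself. Your preliminary reductions are correct bookkeeping. In the convention $\nabla_K(z)=P_K(i,-iz)$ one does get $p_{0_K}(i)=1$, $a_2=-p_{2_K}(i)$, $a_4=p_{4_K}(i)$; writing $p_{2_K}(\ell)=(\ell^2+1)r(\ell^2)$ gives $p_{2_K}'(i)=2i\,r(-1)$, so the proposition is equivalent to $r(-1)^2=a_4$; and the skein expansion at one crossing of the adjacency set correctly converts this into a statement about the $m^1$-coefficient and the Sato--Levine invariant of the resolved two-component link $L_1$. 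But at exactly that point you stop. The squaring identity is the entire content of the proposition; as you yourself observe it fails for general algebraically split resolutions, so nothing proved so far uses $2$-adjacency, and your plan for extracting it from the second crossing (``controlling the contributions of $N$ and of the resolved-unknot link \dots is where the work lies'') is left unexecuted. What you have is a reduction plus a research plan, not a proof.

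Moreover, the specific way you propose to close the gap looks wrong. You expect the quantity $\tilde s(-1)$ to be, up to sign, the integer $\omega$ of Theorem \ref{thm-newobst}, ``so that $a_4=\omega^2$ falls out.'' That identification cannot hold: $\det(K)=|1-4a_2+16a_4-64a_6+\cdots|$ involves all Conway coefficients, so $\omega$ is not a function of $a_4$ alone. Concretely, $13n_{1179}$ is $2$-adjacent (Table \ref{tab:13crossing2adjknots}) with $\nabla=1+z^4+2z^6+z^8$, hence $a_2=0$ and $a_4=1$, while $\det=145=4\cdot 6^2+1$ forces $\omega=6$; so $a_4\neq\omega^2$ (nor $(\omega/2)^2$). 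The integer whose square shows up in $a_4$ is a more local quantity attached to the adjacency data---compare the band linking number $\ell$ in Proposition \ref{prop:conwaypolynomial}, where $a_4=\pm\ell^2$ independently of the determinant---so the two-fold skein expansion you sketch would need to produce an invariant of that kind, not the branched-cover linking number $\omega$. In short: the decisive step is missing, and the heuristic you offer for it points at the wrong invariant.
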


\begin{prop} [\cite{Tao1}] \label{taoconway}
    Suppose $K$ is $2$-adjacent, $a_2=0$, and $a_4>0$. Then $K$ must have unknotting crossings of both signs and, by our Proposition \ref{prop-basic-obstructions} (2), we have $\sigma(K)=0$.
\end{prop}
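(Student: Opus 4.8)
\textbf{Proof proposal for Proposition \ref{taoconway}.}

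The plan is to combine Proposition \ref{prop-basic-obstructions}(2) with Proposition \ref{tao5.2} to derive a sign constraint on the unknotting crossings. Assume $K$ is $2$-adjacent with $a_2 = 0$ and $a_4 > 0$. By Proposition \ref{tao5.2}, we know $p_{2_K}'(i) = \pm 2i\sqrt{a_4}$, so $p_{2_K}'(i)$ is a nonzero purely imaginary number. The first step is to recall how the HOMFLY-PT polynomial behaves under a crossing change, and in particular how specializing to $\ell = i$ interacts with the signed unknotting number: the key fact (implicit in Tao's analysis) is that if $K$ can be unknotted by changing a crossing of a \emph{single} sign, then $p_{2_K}'(i)$ is forced to have a definite sign relationship with that crossing's sign — concretely, $\pm p_{2_K}'(i)/i$ would be nonnegative (or nonpositive) depending only on the sign of the unknotting crossing. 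Since $p_{2_K}'(i)/i = \pm 2\sqrt{a_4} \neq 0$, this quantity is strictly positive or strictly negative, and the sign is not free.

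The second step is to run this argument twice. Because $K$ is $2$-adjacent via crossings $c_1, c_2$, changing $c_1$ alone unknots $K$ and changing $c_2$ alone unknots $K$. If $c_1$ and $c_2$ had the same sign, both crossing-change relations would impose the \emph{same} sign constraint on $p_{2_K}'(i)/i$, which is consistent — so the real content is the contrapositive direction: I want to show that the \emph{only} way $a_4 > 0$ can occur (with $a_2 = 0$) is if one can exhibit unknotting crossings of opposite signs, forcing the two constraints to be incompatible unless $p_{2_K}'(i) = 0$, contradicting $a_4 > 0$ unless both signs are realized. More carefully: if every unknotting crossing of $K$ (in particular both members of any $2$-adjacency set) were of the same sign, then the sign of $p_{2_K}'(i)/i$ would be pinned to that of $a_2$-type corrections, and comparing with the formula $p_{2_K}'(i) = \pm 2i\sqrt{a_4}$ together with the parity/sign bookkeeping for the Conway coefficient $a_4$ would give a contradiction. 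Hence $K$ must have unknotting crossings of both signs. Then the signature claim $\sigma(K) = 0$ follows immediately from Proposition \ref{prop-basic-obstructions}(2), which says that a knot with both a positive and a negative unknotting crossing has signature $0$.

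The main obstacle will be pinning down the precise sign relationship between $p_{2_K}'(i)$ and the sign of an unknotting crossing; this requires unwinding the skein-theoretic argument in \cite{Tao1} (the proof of Proposition \ref{tao5.2}) to see that it actually produces a \emph{signed} statement, not merely the magnitude $|p_{2_K}'(i)| = 2\sqrt{a_4}$. Once that signed version is in hand, the rest is a short logical deduction: two same-sign unknotting crossings cannot coexist with $a_4 > 0$, so both signs must appear, and Proposition \ref{prop-basic-obstructions}(2) closes the argument. I would also double-check the edge case $a_4$ a perfect square (guaranteed by Proposition \ref{prop-basic-obstructions}(4) when $a_2 = 0$) to confirm $\sqrt{a_4} \in \mathbb{Z}_{>0}$, so that $p_{2_K}'(i)/i$ is a genuine nonzero integer and the sign comparison is unambiguous.
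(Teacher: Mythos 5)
There is a genuine gap here, and it sits exactly where you flagged it. Your whole plan rests on a ``key fact'' that you never prove and that, as stated, is not available: for an ordinary unknotting-number-one knot, the sign of a single unknotting crossing does \emph{not} pin down the sign of $p_{2_K}'(i)/i$ (or of $a_4$). Resolving one unknotting crossing via the skein relation only says $\nabla_K(z)-1=\epsilon\, z\,\nabla_{L_0}(z)$ for the smoothed two-component link $L_0$, and nothing constrains the sign of the relevant coefficient of $\nabla_{L_0}$; the same is true after specializing the HOMFLY-PT polynomial. Proposition \ref{tao5.2} only ties $p_{2_K}'(i)$ to $\pm 2i\sqrt{a_4}$ for $2$-adjacent knots and carries no sign information about crossings. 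Your middle paragraph also does not close logically: you note that two same-sign unknotting crossings impose ``consistent'' constraints and then assert that a contradiction ``would'' follow from unspecified ``parity/sign bookkeeping''---but that contradiction is precisely the content of the proposition, so nothing has been proved. (Note the paper itself gives no proof; the statement is quoted from Tao, so your outline has to stand on its own.)

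The sign information actually comes from using \emph{both} crossings of the $2$-adjacency set simultaneously, entirely within the Conway polynomial. Writing $\epsilon_1,\epsilon_2$ for the signs of $c_1,c_2$, two applications of the skein relation, together with the fact that smoothing $c_1$ in the $c_2$-changed diagram yields a link with vanishing Conway polynomial, give $\nabla_K(z)=1+\epsilon_1\epsilon_2\, z^2\,\nabla_L(z)$, where $L$ is the link obtained by smoothing both crossings. If $a_2=0$ then $L$ cannot be a knot, so it is a three-component link $A_1\cup A_2\cup B$ in which $c_2$ was a self-crossing of $A=A_1\cup A_2$; since $\operatorname{lk}(A,B)=0$ one has $\operatorname{lk}(A_1,B)=-\operatorname{lk}(A_2,B)=\omega$, and the lowest-order ($z^2$) coefficient of $\nabla_L$, expressed in pairwise linking numbers, equals $-\omega^2$. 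Hence $a_4=-\epsilon_1\epsilon_2\,\omega^2$, so $a_4>0$ forces $\epsilon_1\epsilon_2=-1$, i.e.\ unknotting crossings of both signs, and Proposition \ref{prop-basic-obstructions}(2) then gives $\sigma(K)=0$. This is exactly the sign pattern visible in the paper's own construction (Proposition \ref{prop:conwaypolynomial}, where $a_4=-h_1h_2\ell^2$) and in the determinant dichotomy of Theorem \ref{thm-newobst}. If you want to salvage your HOMFLY-PT route, you would have to extract a genuinely signed statement from Tao's computation of $p_{2_K}$, which in effect reproduces the Conway-polynomial argument above; as submitted, the proposal defers the decisive step rather than carrying it out.
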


We now have the tools necessary to prove our main result (Theorem \ref{thm2adjknots}):

\begin{proof}
    Figures showing the $2$-adjacency set for all knots listed in Theorem \ref{thm2adjknots} are included in the appendix. Proposition \ref{prop-basic-obstructions} and the determinant condition from Theorem \ref{thm-newobst} obstruct $2$-adjacency for all $\leq 12$ crossing knots except for the $2$-adjacent knots in Theorem \ref{thm2adjknots} and the following:

\begin{itemize}\label{basic-obst-knots}
    \item[]  $\{10_{82}, 10_{119}, 11a_{88}, 11a_{139}, 11a_{160}, 11a_{255}, 11n_{34}, 11n_{42}, 11n_{53}, 11n_{56}, 11n_{161}, 11n_{176},\\ 
    12a_{214}, 12a_{217}, 12a_{280}, 12a_{358}, 12a_{588}, 12a_{1228}, 12n_{45}, 12n_{176}, 12n_{258}, 12n_{265}, 12n_{306},\\ 
    12n_{313}, 12n_{370}, 12n_{430}, 12n_{431}, 12n_{434}, 12n_{449}, 12n_{566}, 12n_{586}, 12n_{610}, 12n_{616}, 12n_{620},\\
    12n_{656}, 12n_{777}.\}$
\end{itemize}

For the invariants of knots, we use the KnotInfo database. \cite{knotinfo}

From the list above, Corollary \ref{cor-mccoy} eliminates $10_{119}$, $11a_{88}$, $11a_{160}$, $12a_{214}$, $12a_{217}$, and $12a_{1228}$.

Proposition \ref{tao5.4} eliminates $11a_{139}$, $11n_{53}$, $11n_{56}$, $12a_{280}$, $12n_{45}$, $12n_{306}$, $12n_{370}$, $12n_{431}$, and $12n_{449}$.

Proposition \ref{tao5.2} eliminates $10_{82}$, $11n_{34}$, $11n_{42}$, $11n_{176}$, $12a_{588}$, $12n_{176}$, $12n_{258}$, $12n_{313}$, $12n_{430}$, $12n_{434}$, $12n_{566}$, $12n_{610}$, $12n_{616}$, and $12n_{777}$.

Proposition \ref{taoconway} eliminates $11n_{161}$ and $12n_{265}$.

Finally, by Section \ref{rulingoutalexpolys} above, we know that $11a_{255}$, $12a_{358}$, $12n_{586}$, $12n_{620}$, and $12n_{656}$ are not $2$-adjacent.
\end{proof}

\section{Seifert Surface/Conway Polynomials}\label{sectionseifert}

In this section, we discuss in more detail how we constructed the $2$-adjacent diagrams for the knots in Theorem \ref{thm2adjknots}. This was adapted from the construction used by Askitas and Kalfagianni in \cite{AK}. An appendix is included with an explicit construction of all $2$-adjacent knots up through $12$ crossings.

\begin{figure}
\centering
\hfill
\begin{minipage}[b]{0.3\textwidth}
		\centering
		\resizebox{\linewidth}{!}{
		\begin{tikzpicture}
   		 \begin{knot}[
            	clip width=15,
            	clip radius=8pt,
            	consider self intersections=no splits, 
            	end tolerance=0.1pt
            	]
        		\strand [thick] (0.5,0)
        		to [out=up,in=down] (0.75,1);
        		\strand [thick] (3.5,0)
        		to [out=up, in=down] (3.25,1);
        		\strand [thick] (0.5,5.5)
        		to [out=down, in=up] (0.75,4.5);
        		\strand [thick] (3.5,5.5)
        		to [out=down, in=up] (3.25,4.5);
    		\end{knot}
    		\begin{knot}[
            	draft mode=strands,
            	clip width=15,
            	clip radius=8pt,
            	consider self intersections=no splits, 
            	end tolerance=0.1pt
            	]
        		\strand [thick,only when rendering/.style={dashed}] (-0.5,0)
        		to [out=right, in=left] (4.5,0)
        		to [out=right, in=right] (4.5,5.5)
        		to [out=left, in=right] (-0.5,5.5)
        		to [out=left, in=left] (-0.5,0);
    		\end{knot}
    		\node[font=\Huge] at (2,2.75) {T};
		\end{tikzpicture}
		}
	\end{minipage}
	\begin{minipage}[c]{0.1\textwidth}
		\centering
		$\longrightarrow$
		\vspace{2cm}
	\end{minipage}
	\begin{minipage}[b]{0.3\textwidth}
		\centering
		\resizebox{\linewidth}{!}{
		\begin{tikzpicture}
    		\begin{knot}[
            	clip width=15,
            	clip radius=8pt,
            	consider self intersections=no splits, 
            	end tolerance=0.1pt
            	]
        		\strand [thick] (-0.5,0)
		to [out=right, in=left] (0,0);
		\strand [thick] (1,0)
		to [out=right, in=left] (3,0);
		\strand [thick] (4,0)
		to [out=right, in=left] (4.5,0)
        		to [out=right, in=right] (4.5,5.5)
        		to [out=left, in=right] (-0.5,5.5)
        		to [out=left, in=left] (-0.5,0);
        		\strand [thick] (0,0)
        		to [out=up, in=down] (0.25,1);
        		\strand [thick] (3,0)
        		to [out=up, in=down] (2.75,1);
        		\strand [thick] (1,0)
        		to [out=up, in=down] (1.25,1);
        		\strand [thick] (4,0)
	        	to [out=up, in=down] (3.75,1);
	        	\strand [thick] (0.25,4.4)
	       	to [out=up, in=down] (0,5.4)
	        	to [out=up, in=left] (0.5,6)
	        	to [out=right, in=up] (1,5.4)
	        	to [out=down, in=up] (1.25,4.4);
	        	\strand [thick] (3.75,4.4)
	        	to [out=up, in=down] (4,5.4)
	        	to [out=up, in=right] (3.5,6)
	        	to [out=left, in=up] (3,5.4)
	        	to [out=down, in=up] (2.75,4.4);
	        	\flipcrossings{2,3}
    \end{knot}
    \node[font=\Huge] at (2,2.75) {$\text{T}$};
\end{tikzpicture}
		}
	\end{minipage}
	\hfill
\vspace{10pt}
	\hfill
	\begin{minipage}[b]{0.3\textwidth}
		\centering
		\resizebox{\linewidth}{!}{
		\begin{tikzpicture} 
    		\begin{knot}[
            	clip width=15,
            	clip radius=8pt,
            	consider self intersections=no splits, 
            	end tolerance=0.1pt
            	]
        		\strand [thick] (0.5,0)
        		to [out=up,in=down] (0.5,5.5);
        		\strand [thick] (3.5,0)
        		to [out=up, in=right] (2,3)
        		to [out=left, in=up] (-1,2.25)
        		to [out=down, in=left] (2,1.5)
        		to [out=right, in=down] (3.5,4.5)
        		to [out=up, in=down] (3.5,5.5);
        		\flipcrossings{1,3}
    		\end{knot}
    		\begin{knot}[
            	draft mode=strands,
            	clip width=10,
            	clip radius=8pt,
            	consider self intersections=no splits, 
            	end tolerance=0.1pt
            	]
        		\strand [thick,only when rendering/.style={dashed}] (-0.5,0)
        		to [out=right, in=left] (4.5,0)
        		to [out=right, in=right] (4.5,5.5)
        		to [out=left, in=right] (-0.5,5.5)
        		to [out=left, in=left] (-0.5,0);
    		\end{knot}
		\end{tikzpicture}
		}
	\end{minipage}
	\begin{minipage}[c]{0.1\textwidth}
		\centering
		$\longrightarrow$
		\vspace{2cm}
	\end{minipage}
	\begin{minipage}[b]{0.3\textwidth}
		\centering
		\resizebox{\linewidth}{!}{
		\begin{tikzpicture}
   		 \begin{knot}[
 	           clip width=15,
 	           clip radius=8pt,
	            consider self intersections=no splits, 
	            end tolerance=0.1pt
 	           ]
	        	\strand [thick] (0,0)
	        	to [out=up,in=down] (0,6)
	        	to [out=up, in=left] (0.5,6.5)
       	 	to [out=right, in=up] (1,6)
        		to [out=down, in=up] (1,0)
        		to [out=right, in=left] (3,0)
        		to [out=up, in=right] (2,2.5)
        		to [out=left, in=up, looseness=0.4] (-0.5,2.25)
        		to [out=down, in=left, looseness=0.4] (2,2)
        		to [out=right, in=down] (3,4)
        		to [out=up, in=down, looseness=0.5] (4,4.5)
        		to [out=up, in=down, looseness=0.5] (3,5)
        		to [out=up, in=down, looseness=1] (3,6)
        		to [out=up, in=left] (3.5,6.5)
        		to [out=right, in=up] (4,6)
       	 	to [out=down, in=up, looseness=1] (4,5)
        		to [out=down, in=up, looseness=0.5] (3,4.5)
        		to [out=down, in=up, looseness=0.5] (4,4)
        		to [out=down, in=right] (2,1)
        		to [out=left, in=left, looseness=5] (2,3.5)
        		to [out=right, in=up] (4,0)
        		to [out=right, in=left] (4.5,0)
        		to [out=right, in=right] (4.5,5.5)
        		to [out=left, in=right] (-0.5,5.5)
        		to [out=left, in=left] (-0.5,0)
        		to [out=right, in=left] (0,0);
        		\flipcrossings{11, 12, 1, 6, 4, 9, 10, 14, 16}
    		\end{knot}
		\end{tikzpicture}
		}
	\end{minipage}
	\hfill
    \captionof{figure}{An arbitrary 2-string tangle and the knot $K_T$ resulting from the finger-move construction. (Bottom) The rational $(-2,1)$ tangle. After finger moves, it forms the $2$-adjacent knot $12n_{650}$ upon the choice of a $(+,-)$ clasp structure. A $(+,+)$ clasp structure would yield the $2$-adjacent knot $12n_{464}$, and $(-,-)$ would yield the $12n_{483}$ knot. Note that since the right arc of the tangle has writhe, twisting must be added to ensure the right band is zero-framed.}
    \label{fig:finger-move}
\end{figure}
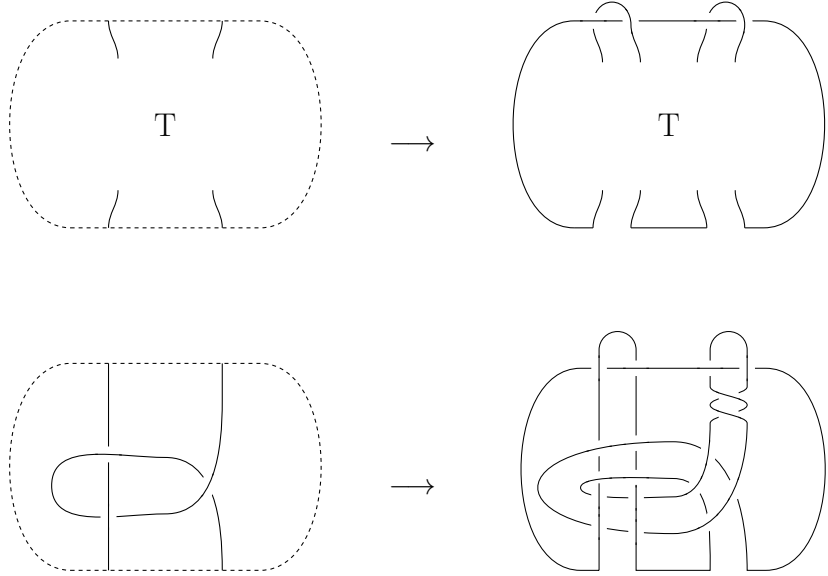

In \cite{AK}, Askitas and Kalfagianni use a construction involving finger-moves on a diagram of the unknot to characterize $n$-adjacent knots when $n\geq3$. The finger-moves are described for a certain family of trivalent spatial graphs. In particular (in \cite{AK}, Theorem 4.4), they showed that when $n\geq3$, a knot $K$ is $n$-adjacent if and only if it arises from performing these finger-moves to a \emph{Brunnian Suzuki $n$-graph} (see Section 3 of \cite{AK} for definitions). By noticing that a Seifert surface is easy to identify in such graphs, they find that the Alexander/Conway polynomial of any $n$-adjacent knot  is trivial when $n\geq 3$. This result does not apply to $2$-adjacent knots.

For $2$-adjacent knots, we look at a more narrow construction to allow us to analyze the Seifert surface. Consider a diagram of a $2$-string tangle $T$ such that neither of the two arcs is locally knotted, together with the planar circle that is the boundary of the disk containing the tangle. Perform a zero-framed finger-move along each arc, terminating in a positive or negative clasp. This yields a $2$-adjacent knot $K_{T}$, with the two clasps yielding the $2$-adjacency set. See Figure \ref{fig:finger-move}.

\begin{mydef} We say that $T$ is \emph{interleaved} if the endpoints of the two arcs alternate around the planar circle, and $T$ is non-interleaved otherwise. 
\end{mydef}

\begin{prop}\label{prop:conwaypolynomial}
The $2$-adjacent knot $K_T$ has Conway polynomial $\nabla(z) = 1 - h_1h_2\ell^2 z^4$ when $T$ is interleaved, and $\nabla(z) = 1 +h_1h_2z^2-h_1h_2 (\ell^2 -\ell) z^4$ when $T$ is non-interleaved. Here, $\ell\in\mathbb{Z}$ and $h_1,h_2\in\{+1,-1\}$.
\end{prop}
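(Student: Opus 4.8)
The plan is to compute the Conway polynomial of $K_T$ directly from its construction, using the skein relation to reduce the two clasps one at a time. Recall that $K_T$ is obtained from a diagram of the unknot (the planar circle) by performing zero-framed finger moves along the two arcs of $T$, each terminating in a $\pm$ clasp; the two clasp crossings form the $2$-adjacency set. Each clasp contributes a factor of two crossings of the same sign $h_i\in\{+1,-1\}$. My first step would be to set up the skein triple at one crossing of the first clasp: resolving it gives $K_T$ itself, a diagram where that crossing is changed (which, by the $2$-adjacency property, is the unknot, since changing one crossing of the adjacency set unknots $K$... wait, no — changing that single crossing of the clasp is \emph{not} changing the whole clasp, so one must be careful here).

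So the more careful approach is to treat the clasp as a unit. Let $K_T^{(1)}$ denote the knot obtained by removing the first clasp (replacing it by the parallel strands of the zero-framed finger), and similarly $K_T^{(2)}$, $K_T^{(1,2)}$. By the finger-move description, $K_T^{(1)}$ is the knot you get from $T$ by doing only the second finger move, $K_T^{(2)}$ only the first, and $K_T^{(1,2)}$ is the unknot. Using the standard fact that a clasp of sign $h$ relates the Conway polynomials of "clasp present" versus "clasp absent" by
$$
\nabla_{K_T}(z) = \nabla_{K_T^{(1)}}(z) + h\, z^2 \cdot \nabla_{L}(z),
$$
where $L$ is the two-component link obtained by smoothing the clasp (the finger pushed through and linked), one reduces the number of clasps by one at the cost of introducing a link term. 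Iterating this at the second clasp, and then repeatedly applying the clasp formula to the link terms, expresses $\nabla_{K_T}(z)$ as $\nabla_{\mathcal U}(z)=1$ plus correction terms that are polynomials in $z^2$ whose coefficients record the linking numbers of the relevant finger-pushed curves with the arcs of $T$ and with each other. The key geometric input is that the linking number of the $i$-th finger curve with the $i$-th arc of $T$ is zero (the finger is zero-framed), while the cross-linking between the first finger and the second arc — and the interaction of the two fingers with each other — is governed by whether $T$ is interleaved or not.

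The heart of the computation is bookkeeping these linking numbers. When $T$ is interleaved, the two arcs separate the fingers in such a way that the only surviving contribution is a single $z^4$ term with coefficient $-h_1 h_2 \ell^2$, where $\ell$ records the relevant linking/clasp-pushed count through $T$; when $T$ is non-interleaved, an additional $z^2$ term $h_1 h_2 z^2$ appears, together with a modified $z^4$ coefficient $-h_1 h_2(\ell^2-\ell)$, the shift by $\ell$ coming from the extra way the two fingers can link when their endpoints do not alternate. Concretely, I would identify $L$ in each case explicitly: for the interleaved case $L$ is (after the reductions) a connected sum / cabling of Hopf-type links whose Conway polynomial is a monomial in $z$; for the non-interleaved case the link has an extra clasp-linking, producing the extra $z^2$. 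The parameter $\ell$ should be read off as the (signed) count of strands of the pushed finger passing through the complementary region of $T$, i.e. essentially a linking number determined by $T$.

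The main obstacle will be the careful case analysis of the link terms: showing that after all iterated clasp resolutions the only surviving terms are exactly those listed, with no stray $z^2$ term in the interleaved case and no stray $z^6$ or higher terms in either case. This amounts to verifying that the relevant pushed-off curves bound obvious Seifert surfaces (annuli or once-punctured tori) so that their Conway polynomials truncate at the claimed degree — in other words, exploiting the low genus of the Seifert surface visible in the finger-move construction, exactly the point the paragraph preceding the proposition emphasizes. I would organize this by first proving a normal form for $L$ in each of the two cases (interleaved / non-interleaved), then computing $\nabla_L$ for that normal form, and finally assembling via the two clasp formulas. A sanity check against the examples in Figure~\ref{fig:finger-move} (the tangle giving $12n_{650}$, $12n_{464}$, $12n_{483}$) and against Proposition~\ref{prop-basic-obstructions}(4) (forcing $a_2=\pm1$ or $a_2=0$ with $a_4$ a perfect square — note $-h_1h_2\ell^2$ is indeed minus a perfect square, and in the non-interleaved case $a_2=h_1h_2=\pm1$) would confirm the formula.
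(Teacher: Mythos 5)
Your outline sets up the right objects but stops short of a proof: every quantitative claim --- that the corrections are exactly $-h_1h_2\ell^2z^4$ in one case and $h_1h_2z^2-h_1h_2(\ell^2-\ell)z^4$ in the other, that no higher-degree terms survive, and even what $\ell$ precisely is --- is deferred to ``bookkeeping'' and a ``case analysis of the link terms'' that you never carry out. Moreover, the clasp-resolution identity you start from, $\nabla_{K_T}=\nabla_{K_T^{(1)}}+h\,z^2\nabla_{L}$, is not the Conway skein relation and is unjustified as stated: changing one crossing of a clasp (and then removing the two remaining opposite crossings by a Reidemeister II move) gives $\nabla_{K_T}-\nabla_{K_T^{(1)}}=\pm z\,\nabla_{L_0}$, where $L_0$ is the two-component link obtained from the oriented smoothing; the extra factor of $z$ and the linking-number coefficient only emerge after you compute $\nabla_{L_0}$ through degree $3$, which is exactly the part left open. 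So the proposal is a plan whose central step --- identifying $L_0$ in each case, controlling its Conway polynomial, and extracting the coefficients $\ell^2$ versus $\ell^2-\ell$ --- is missing.

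The paper's proof takes the route you only gesture at in your final paragraph, and it is much shorter: the finger-move diagram exhibits a genus-two Seifert surface $F$ in disk-band form (Figure \ref{fig:seifertsurfacewithcurves}) with basis $\alpha_1,\alpha_2,\beta_1,\beta_2$ of $H_1(F)$. The clasp signs give the diagonal entries $h_1,h_2$, the zero framing of the fingers gives $\operatorname{lk}(\beta_i,\beta_i^{\#})=0$, the dual pairs give the $1$'s, and $\ell$ is defined as the linking number of the two arcs, appearing as $\operatorname{lk}(\beta_1,\beta_2^{\#})$. The entire difference between the two cases is recorded in one matrix entry: when the $\beta$-curves are disjoint on $F$ both off-diagonal entries equal $\ell$, while when they meet in a single point on $F$ the push-offs force $\operatorname{lk}(\beta_1,\beta_2^{\#})$ and $\operatorname{lk}(\beta_2,\beta_1^{\#})$ to differ by $1$ (entries $\ell$ and $\ell-1$); the two stated polynomials then fall out of $\det(V-tV^{T})$ with $z=t^{1/2}-t^{-1/2}$, the $\ell,\ell-1$ asymmetry being what creates the $z^2$ term and the $\ell^2-\ell$ coefficient. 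Note also that the paper's computation attaches that $z^2$ term to the case in which the two core curves intersect on the surface (labeled the interleaved case in the proof, opposite to the labeling in the statement), so your heuristic that the $z^2$ term comes from the non-interleaved ``extra linking'' is asserted rather than derived and would need to be settled, not assumed, if you pursue the skein route.
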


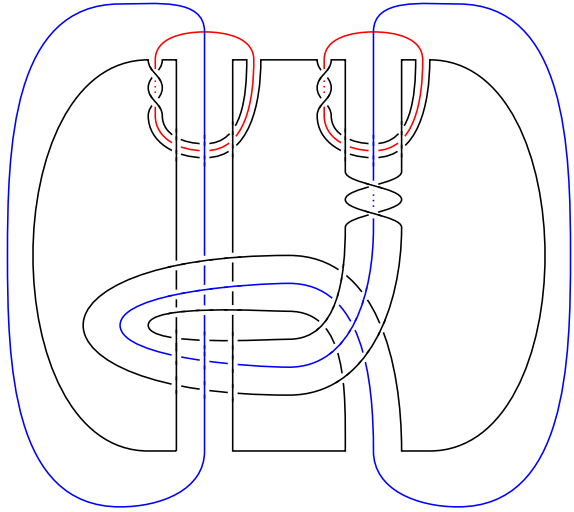
\begin{figure}
\centering
\begin{minipage}[b]{0.5\textwidth}
		\centering
		\resizebox{\linewidth}{!}{
		\begin{tikzpicture}
   		 \begin{knot}[
 	           clip width=20,
 	           clip radius=2.5pt,
	            consider self intersections=no splits, 
	            end tolerance=0.1pt
 	           ]
	        	\strand [thick] (0,0)
	        	      to [out=up,in=down] (0,7)
	        	      to [out=left, in=right] (-0.25,7)
       	 	        to [out=down, in=up] (-0.5,6.5)
                    to [out=down, in=up] (-0.25,6)
                    to [out=down, in=left] (0.48,5.5)
                    to [out=right, in=left] (0.52,5.5)
                    to [out=right, in=down] (1.25,7)
                    to [out=left, in=right] (1,7)
        		to [out=down, in=up] (1,0)
        		to [out=right, in=left] (3,0)
        		to [out=up, in=right] (2,2.5)
        		to [out=left, in=up, looseness=0.4] (-0.5,2.25)
        		to [out=down, in=left, looseness=0.4] (2,2)
        		to [out=right, in=down] (3,4)
        		to [out=up, in=down, looseness=0.5] (4,4.5)
        		to [out=up, in=down, looseness=0.5] (3,5)
        		      to [out=up,in=down] (3,7)
	        	      to [out=left, in=right] (2.75,7)
       	 	        to [out=down, in=up] (2.5,6.5)
                    to [out=down, in=up] (2.75,6)
                    to [out=down, in=left] (3.48,5.5)
                    to [out=right, in=left] (3.52,5.5)
                    to [out=right, in=down] (4.25,7)
                    to [out=left, in=right] (4,7)
       	 	  to [out=down, in=up, looseness=1] (4,5)
        		to [out=down, in=up, looseness=0.5] (3,4.5)
        		to [out=down, in=up, looseness=0.5] (4,4)
        		to [out=down, in=right] (2,1)
        		to [out=left, in=left, looseness=5] (2,3.5)
        		to [out=right, in=up] (4,0)
        		to [out=right, in=left] (4.5,0)
        		      to [out=right, in=right] (4.5,7)
        		      to [out=down, in=right] (3.52,5.25)
                    to [out=left, in=right] (3.48,5.25)
                    to [out=left, in=down] (2.5,6)
                    to [out=up, in=down] (2.75,6.5)
                    to [out=up, in=down] (2.5,7)
                    to [out=left, in=right] (1.5,7)
                    to [out=down, in=right] (0.52,5.25)
                    to [out=left, in=right] (0.48,5.25)
                    to [out=left, in=down] (-0.5,6)
                    to [out=up, in=down] (-0.25,6.5)
                    to [out=up, in=down] (-0.5,7)
        		to [out=left, in=left] (-0.5,0)
        		to [out=right, in=left] (0,0);
                \strand[red,thick,only when rendering/.style={dotted}] (-0.375,6.73)
                to [out=down, in=up] (-0.375,6.27);
                \strand[red,thick] (-0.375,7)
                to [out=down, in=up] (-0.375,6.77);
                \strand[red,thick] (-0.375,6.22)
                to [out=down, in=up] (-0.375,6)
                to [out=down, in=left] (0.48,5.375)
                to [out=right, in=left] (0.52,5.375)
                to [out=right, in=down] (1.375,7)
                to [out=up, in=right] (0.5,7.5)
                to [out=left, in=up] (-0.375,7);
                \strand[red,thick,only when rendering/.style={dotted}] (2.625,6.73)
                to [out=down, in=up] (2.625,6.27);
                \strand[red,thick] (2.625,7)
                to [out=down, in=up] (2.625,6.77);
                \strand[red,thick] (2.625,6.22)
                to [out=down, in=up] (2.625,6)
                to [out=down, in=left] (3.48,5.375)
                to [out=right, in=left] (3.52,5.375)
                to [out=right, in=down] (4.375,7)
                to [out=up, in=right] (3.5,7.5)
                to [out=left, in=up] (2.625,7);
                \strand[blue,thick] (0.5,0)
                to [out=up, in=down] (0.5,7)
                to [out=up, in=down] (0.5,7.5)
                to [out=up, in=right] (-1,8)
                to [out=left, in=up] (-3,3.5)
                to [out=down, in=left] (-1,-1)
                to [out=right, in=down] (0.5,0);
                \strand[blue,thick] (3.5,0)
                to [out=up, in=right] (2,3)
                to [out=left, in=up, looseness=0.5] (-1,2.25)
                to [out=down, in=left, looseness=0.5] (2,1.5)
                to [out=right, in=down] (3.5,4)
                to [out=up, in=down] (3.5,4.23);
                \strand[blue,thick,only when rendering/.style={dotted}] (3.5,4.27)
                to [out=up, in=down] (3.5,4.73);
                \strand[blue,thick] (3.5,4.77)
                to [out=up, in=down] (3.5,7)
                to [out=up, in=down] (3.5,7.5)
                to [out=up, in=left] (5,8)
                to [out=right, in=up] (7,3.5)
                to [out=down, in=right] (5,-1)
                to [out=left, in=down] (3.5,0);
        		\flipcrossings{2,5,8,10,12,13,14,17,20,22,23,24,26,29,35,36,37,42,44,45,46,47,48,49,51}
    		\end{knot}
		\end{tikzpicture}
		}
	\end{minipage}
    \captionof{figure}{The knot $12n_{650}$ with a Seifert surface and the curves $\alpha_1, \alpha_2$ (red) and $\beta_1,\beta_2$ (blue) shown.}
    \label{fig:seifertsurfacewithcurves}
\end{figure}

\begin{proof}
Recall that the Seifert matrix of a Seifert surface $F, \partial F=K$, is defined by $V_{ij}:=\operatorname{lk}( \gamma_i, \gamma_j^\#)$, where $\gamma_i, \gamma_j$ are generators of $H_1(F)$ and $\gamma_j^\#$ is the push-off of $\gamma_j$ in the normal direction. The Alexander polynomial of the knot $K$ is given by $\det(V-tV^T)$. 

The diagram for $K_T$ resulting from its Suzuki graph yields a disk-band configuration of a genus two Seifert surface $F$, as in Figure \ref{fig:seifertsurfacewithcurves}. Label the generators of $H_1(F)$ as $\alpha_1, \alpha_2, \beta_1, \beta_2$. Pick orientations of $\beta_1$ and $\beta_2$, then fix orientations of $\alpha_1$ and $\alpha_2$ such that $\operatorname{lk}(\beta_1,\alpha_1^\#)=1$ and $\operatorname{lk}(\beta_2,\alpha_2^\#)=1$. When $G$ is non-interleaved, the Seifert matrix $V$ is given by
\[
    V=
    \begin{blockarray}{ccccc}
    \alpha_1^\# & \alpha_2^\# & \beta_1^\# & \beta_2^\# \\
    \begin{block}{[cccc]c}
    h_1 & 0 & 0 & 0 & \alpha_1\\
    0 & h_2 & 0 & 0 & \alpha_2\\
    1 & 0 & 0 & \ell & \beta_1\\
    0 & 1 & \ell & 0 & \beta_2\\
    \end{block}
    \end{blockarray}
    \]
Here, $h_1$ and $h_2$ are the signs of the clasps, and $\ell$  is the linking number of the two arcs.

When $T$ is interleaved, the Seifert matrix $V$ is given by
\[
    V=
    \begin{blockarray}{ccccc}
    \alpha_1^\# & \alpha_2^\# & \beta_1^\# & \beta_2^\# \\
    \begin{block}{[cccc]c}
    h_1 & 0 & 0 & 0 & \alpha_1\\
    0 & h_2 & 0 & 0 & \alpha_2\\
    1 & 0 & 0 & \ell & \beta_1\\
    0 & 1 & \ell-1 & 0 & \beta_2\\
    \end{block}
    \end{blockarray}
    \]
Most values in these matrices are apparent from Figure \ref{fig:seifertsurfacewithcurves}, but some explanation for the presence of $\ell-1$ may be needed. In the interleaved case, the arcs $\beta_1$ and $\beta_2$ cross an odd number of times on the bands, and meet at a point in the Seifert surface. Make a choice of orientation on $\beta_1$ and $\beta_2$. Then, when each is pushed off, the point where they meet in the Seifert surface contributes either a positive or a negative crossing to the overall linking number such that $\operatorname{lk}(\beta_1,\beta_2^\#)$ and $\operatorname{lk}(\beta_2,\beta_1^\#)$ differ by $1$. Without loss of generality, we label $\beta_1$ and $\beta_2$ (and therefore $\alpha_1$ and $\alpha_2$) such that $\ell$ is the larger value. 

Taking the determinant $\det(V-tV^T)$, and applying the normalization $z=(t^{1/2} - t^{-1/2})$, as well as dividing by $t^2$, we obtain the polynomials
\[
	\nabla(z) = 1 - h_1h_2\ell^2 z^4 \qquad \text{and} \qquad  \nabla(z) = 1 +h_1h_2z^2-h_1h_2 (\ell^2 -\ell) z^4 
\]
when $T$ is non-interleaved, and interleaved, respectively. 
\end{proof}

This construction creates $2$-adjacent knots $K_T$ where the corresponding tangle $T$ is fully contained within the outer circle. All of these will have genus $\leq2$. There exist $2$-adjacent knots of genus $>2$, so we know there are $2$-adjacent knots where a corresponding $T$, if it exists, is not contained. Consider the example of $8_{17}$, where a non-contained finger-move diagram is shown in the Appendix Figure \ref{fig:2adj-knots}. It is not yet known if all $2$-adjacent knots can be realized as finger-move diagrams, allowing for uncontained variants.
    
\section{Conclusion}

 Now that all knots with $12$ or fewer crossing have a known $2$-adjacency status, the next logical step would be to try and categorize the knots with $13$ crossings, but it would require new methods to do so. Among the $13$ crossing knots, there are $13$ known to be $2$-adjacent and $74$ whose $2$-adjacency status is unknown (see the appendix for the full list). Due to the complexity of performing the method outlined in Section \ref{rulingoutalexpolys}, it has not been applied here to any $13$ crossing knots. There are many $13$ crossing knots of unknown $2$-adjacency status whose unknotting number is also not known, making the above method even more difficult.

As can be seen from this paper, $2$-adjacent knots are very different from other $n$-adjacent knots. When Lidman and Moore \cite{lidman2023adjacencythreemanifoldsbrunnianlinks} expanded the results on $n$-adjacency of knots by Askitas and Kalfagianni to $n$-adjacency of $3$-manifolds, many of their results applied only for $n>2$. The case of $2$-adjacent $3$-manifolds is currently not studied.

The evidence suggests that alternating $2$-adjacent knots should be easy to identify. All known alternating $2$-adjacent knots have their $2$-adjacency visible in those minimal diagrams checked. Proposition \ref{cor-mccoy} gives us that alternating $2$-adjacent knots with positive and negative unknotting number $1$ display a positive and negative unknotting crossing in any minimal diagram. There is no guarantee that these crossings are still a $2$-adjacency, but they are in every minimal diagram inspected for this paper. We conjecture that those crossings are still a $2$-adjacency. Additionally, we conjecture that the only alternating knot with a $2$-adjacency of the same crossing sign is the trefoil. No others have been found, despite a brute-force search through 16 crossings. It is difficult to prove this statement because we would need to use a quality specific to alternating knots. The knot $8_{21}$ has a $2$-adjacency set of the same sign, but it is both quasi-alternating and almost alternating, so a condition that also applies to quasi-alternating or almost alternating knots would be insufficient to prove this conjecture. As evidence in support of the conjecture, Askitas and Kalfagianni previously found that there are no non-trivial alternating $n$-adjacent knots ($n>2$). If any did exist, it would have a $2$-adjacency set of the same crossing sign as a subset of the $n$-adjacency.

Together, these two conjectures would imply a version of Kohn's conjecture for $2$-adjacent alternating knots: 

\begin{conj}
    All alternating non-trivial $2$-adjacent knots have a $2$-adjacency set in every minimal diagram.
\end{conj}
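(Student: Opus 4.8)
The plan is to make precise the reduction sketched above: split a hypothetical counterexample according to the signs of the crossings in a $2$-adjacency set, and reduce the statement to the two conjectures stated just above it. Let $K$ be a non-trivial alternating $2$-adjacent knot and let $D$ be an arbitrary reduced alternating (hence minimal) diagram of $K$. Either $K$ admits a $2$-adjacency set consisting of one positive and one negative crossing, or every $2$-adjacency set of $K$ consists of two crossings of the same sign. In the first case $K$ has both positive and negative unknotting number one, so Corollary~\ref{cor-mccoy} already exhibits a positive unknotting crossing $c_+$ and a negative unknotting crossing $c_-$ in $D$, and it remains to prove (A): the crossings produced by Corollary~\ref{cor-mccoy} can be chosen so that changing \emph{both} of them also yields the unknot, i.e.\ $\{c_+,c_-\}$ is a genuine $2$-adjacency set. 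In the second case it remains to prove (B): the only non-trivial alternating knot carrying a like-signed $2$-adjacency set is the trefoil, whose unique reduced diagram visibly contains such a set (any two of its three crossings).

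For (A), the difficulty is exactly that Corollary~\ref{cor-mccoy} and Theorem~4 of \cite{mccoy2014alternating} control only individual crossing changes, whereas $2$-adjacency concerns a simultaneous double change. First I would try a combinatorial route: in a reduced alternating diagram an unknotting crossing is recorded by a distinguished edge of the Tait/Goeritz graph (equivalently by a clasp subarc of a checkerboard surface), and I would attempt to show that an available $c_+$ and an available $c_-$ can always be selected with disjoint associated edges, so that the two crossing changes are supported in disjoint balls and the double change is the composite of two independent unknotting moves. If that picture fails, the fallback is to pass to branched double covers: as in the proof of Theorem~\ref{thm-newobst}, changing $c_+$ and $c_-$ corresponds to two half-integral surgeries on a two-component link in $S^3$, and since $K$ is alternating (hence quasi-alternating) $\Sigma(K)$ is an $L$-space; one would then force the relevant intermediate manifold to be $S^3$ using the $d$-invariant bookkeeping of Section~\ref{rulingoutalexpolys} together with Gordon--Luecke's theorem, concluding that the intermediate knot is trivial.

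For (B), I would begin from the determinant constraint of Theorem~\ref{thm-newobst}: a like-signed $2$-adjacency set forces $\det(K)=4\omega^2-1$, so $\det(K)\in\{3,15,35,63,\dots\}$, with $\omega=1$ already singling out $\det(3_1)=3$. Feeding in Proposition~\ref{prop-basic-obstructions}(4) on $(a_2,a_4)$, the fact that $\Sigma(K)$ is an $L$-space, and the Montesinos/Baker--Motegi machinery of Section~\ref{torres} (the lift $J$ of the unknotting arc is then an $L$-space knot admitting a half-integral $L$-space surgery whose $(-\frac{1}{2})$-twist is unknotted), I would try to cut the list of possible $\Sigma(K)$, and hence of possible $K$, down to finitely many candidates and check them directly. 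The genuinely hard point — as the remark on $8_{21}$ above highlights — is that all of these inputs are also enjoyed by quasi-alternating and almost-alternating knots, so to isolate the trefoil one must inject a property that alternating knots have and these do not; a natural candidate is the Gordon--Litherland / Murasugi relation tying $\sigma(K)$, $\det(K)$ and the Goeritz form of a reduced alternating diagram together, which one would want to show is incompatible with the signature and $\det(K)=4\omega^2-1$ constraints of a like-signed $2$-adjacency unless $K=3_1$.

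Granting (A) and (B), the conjecture follows immediately from the case split above. I expect (A) to be the principal obstacle: it asks to upgrade ``an oppositely-signed pair of unknotting crossings is present in every minimal diagram'' to ``a full $2$-adjacency set is present in every minimal diagram'', which requires genuinely new control of how two crossing changes interact rather than another application of existing single-change technology. Step (B) seems closer to the reach of the $L$-space and determinant methods already developed here, provided the missing alternating-specific ingredient can be supplied.
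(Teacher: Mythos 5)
The statement you are addressing is stated in the paper as a \emph{conjecture}: the authors give no proof, only the remark that it would follow from two other open conjectures (that the positive/negative unknotting crossings guaranteed by Corollary \ref{cor-mccoy} still form a $2$-adjacency set, and that the trefoil is the only alternating knot with a like-signed $2$-adjacency set). Your case split into (A) and (B) reproduces exactly that reduction, so the skeleton of your proposal matches the paper's remarks; but what you offer beyond that are strategies, not arguments, and so the proposal does not constitute a proof.

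The concrete gaps: for (A), selecting $c_+$ and $c_-$ with disjoint crossing arcs (or disjoint supporting balls) does \emph{not} make $\{c_+,c_-\}$ a $2$-adjacency set. Each single change unknots $K$, but the simultaneous change is an independent operation whose result can be any knot; disjointness of supports gives no control over the composite, and the paper itself stresses that ``there is no guarantee that these crossings are still a $2$-adjacency.'' Your $d$-invariant fallback has the same problem in a different guise: the Montesinos/Ni--Wu machinery of Sections \ref{torres}--\ref{rulingoutalexpolys} runs in the direction ``assume $2$-adjacency, derive constraints on $\Sigma(K)$ and on the lift $J$''; it provides no mechanism forcing the manifold obtained after changing \emph{both} crossings to be $S^3$, which is what (A) needs. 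For (B), the determinant condition $\det(K)=4\omega^2-1$ together with Proposition \ref{prop-basic-obstructions}(4) and the $L$-space condition does not cut the candidates down to finitely many: infinitely many alternating knots satisfy all of these (for each admissible determinant there are infinitely many knots), and, as the paper's discussion of $8_{21}$ makes explicit, every one of the inputs you list is shared by quasi-alternating and almost-alternating knots, so the decisive alternating-specific ingredient (your Gordon--Litherland/Murasugi suggestion) is precisely the missing idea, not something the existing machinery supplies. In short, your proposal correctly identifies the reduction the authors had in mind and correctly locates (A) as the hard point, but both (A) and (B) remain unproven, so the conjecture is not established.
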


Askitas and Kalfagianni made a construction that includes all $n$-adjacent knots when $n>2$. The construction in this paper is a more general version of theirs and may include all $2$-adjacent knots, allowing for non-contained diagrams, but that is unproven. 

It seems like further research into $2$-adjacent knots will require new techniques. New ways to find $2$-adjacent knots and exclude other knots would provide a stronger list of examples to spark observations. The novel method of obstructing $2$-adjacency using Heegaard Floer $d$-invariants has the potential to be expanded upon for this topic and others. 

\subsection*{Acknowledgments}  We thank Tye Lidman and Allison Moore for their helpful discussions as well as their comments on drafts of the manuscript. EM was supported by NSF Grant DMS--2105469.  JC was supported by NSF Grant DMS--2204148. 

\newpage

\section{Appendix} \label{appendix}

\begin{figure}[h]
\begin{centering}
\begin{minipage}{0.24\textwidth}
\centering
\resizebox{\linewidth}{!}{
\begin{tikzpicture} 
\path[use as bounding box] (-3.6,0) rectangle (7.5,6.5);
    \begin{knot}[
            clip width=10,
            clip radius=8pt,
            consider self intersections=no splits,
            end tolerance=0.1pt
            ]
        \strand [thick] (0,0)
        to [out=up, in=down] (3,6)
        to [out=up,in=left] (3.5,6.5)
        to [out=right,in=up] (4,6)
        to [out=down, in=up] (1,0)
        to [out=right, in=left] (3,0)
        to [out=up, in=down] (0,6)
        to [out=up, in=left] (0.5,6.5)
        to [out=right, in=up] (1,6)
        to [out=down, in=up] (4,0)
        to [out=right, in=left] (4.5,0)
        to [out=right, in=right] (4.5,5.5)
        to [out=left, in=right] (-0.5,5.5)
        to [out=left, in=left] (-0.5,0)
        to [out=right, in=left] (0,0);
        \flipcrossings{8,6}
    \end{knot}
\end{tikzpicture}
}
\caption*{$3_1$}
\end{minipage}
\hfill
\begin{minipage}{0.24\textwidth}
\centering
\resizebox{\linewidth}{!}{
\begin{tikzpicture} 
\path[use as bounding box] (-3.6,0) rectangle (7.5,6.5);
    \begin{knot}[
            clip width=10,
            clip radius=8pt,
            consider self intersections=no splits,
            end tolerance=0.1pt
            ]
        \strand [thick] (0,0)
        to [out=up, in=down] (3,6)
        to [out=up,in=left] (3.5,6.5)
        to [out=right,in=up] (4,6)
        to [out=down, in=up] (1,0)
        to [out=right, in=left] (3,0)
        to [out=up, in=down] (0,6)
        to [out=up, in=left] (0.5,6.5)
        to [out=right, in=up] (1,6)
        to [out=down, in=up] (4,0)
        to [out=right, in=left] (4.5,0)
        to [out=right, in=right] (4.5,5.5)
        to [out=left, in=right] (-0.5,5.5)
        to [out=left, in=left] (-0.5,0)
        to [out=right, in=left] (0,0);
        \flipcrossings{8,3}
    \end{knot}
\end{tikzpicture}
}
\caption*{$4_1$}
\end{minipage}
\hfill
\begin{minipage}{0.24\textwidth}
\centering
\resizebox{\linewidth}{!}{
\begin{tikzpicture} 
\path[use as bounding box] (-3.6,0) rectangle (7.5,6.5);
    \begin{knot}[
            clip width=10,
            clip radius=8pt,
            consider self intersections=no splits,
            end tolerance=0.1pt
            ]
        \strand [thick] (0,0)
        to [out=up, in=south east, looseness=0.75] (1.9,4.2)
        to [out=north west,in=up, looseness=0.6] (-2.5,2.5)
        to [out=down,in=down, looseness=0.8] (4,3)
        to [out=up, in=down] (3,6)
        to [out=up, in=left] (3.5,6.5)
        to [out=right, in=up] (4,6)
        to [out=down, in=up] (5,3)
        to [out=down, in=down, looseness=0.9] (-3.5,2.5)
        to [out=up, in=north west, looseness=0.7] (2.6,4.6)
        to [out=south east, in=up] (1,0)
        to [out=right, in=left] (3,0)
        to [out=up, in=down] (0,6)
        to [out=up, in=left] (0.5,6.5)
        to [out=right, in=up] (1,6)
        to [out=down, in=up] (4,0)
        to [out=right, in=left] (4.6,0)
        to [out=right, in=right] (4.6,5.5)
        to [out=left, in=right] (-0.5,5.5)
        to [out=left, in=left] (-0.5,0)
        to [out=right, in=left] (0,0);
        \flipcrossings{1,2,5,6,18,19,7,20,9,10,15,16,12,24}
    \end{knot}
\end{tikzpicture}
}
\caption*{$8_{17}$}
\end{minipage}
\hfill
\begin{minipage}{0.24\textwidth}
\centering
\resizebox{\linewidth}{!}{
\begin{tikzpicture} 
\path[use as bounding box] (-3.6,0) rectangle (7.5,6.5);
    \begin{knot}[
            clip width=10,
            clip radius=8pt,
            consider self intersections=no splits,
            end tolerance=0.1pt
            ]
        \strand [thick] (0,0)
        to [out=up, in=down, looseness=0.6] (2.5,3)
        to [out=up,in=down, looseness=0.6] (0,6)
        to [out=up, in=left] (0.5,6.5)
        to [out=right, in=up] (1,6)
        to [out=south, in=up, looseness=0.6] (3.3,3)
        to [out=down, in=up, looseness=0.6] (1,0)
        to [out=right, in=left] (3,0)
        to [out=up, in=down, looseness=0.6] (0.7,3)
        to [out=up, in=down, looseness=0.6] (3,6)
        to [out=up, in=left] (3.5,6.5)
        to [out=right, in=up] (4,6)
        to [out=down, in=up, looseness=0.6] (1.5,3)
        to [out=down, in=up, looseness=0.6] (4,0)
        to [out=right, in=left] (4.3,0)
        to [out=right, in=right] (4.3,5.5)
        to [out=left, in=right] (-0.3,5.5)
        to [out=left, in=left] (-0.3,0)
        to [out=right, in=left] (0,0);
        \flipcrossings{3,4,6,7,8,12}
    \end{knot}
\end{tikzpicture}
}
\caption*{$8_{21}$}
\end{minipage}

\begin{minipage}{0.24\textwidth}
\centering
\resizebox{\linewidth}{!}{
\begin{tikzpicture} 
\path[use as bounding box] (-3.6,0) rectangle (7.5,6.5);
    \begin{knot}[
            clip width=10,
            clip radius=8pt,
            consider self intersections=no splits,
            end tolerance=0.1pt
            ]
        \strand [thick] (0,0)
        to [out=up, in=down, looseness=0.6] (2.5,3)
        to [out=up,in=down, looseness=0.6] (0,6)
        to [out=up, in=left] (0.5,6.5)
        to [out=right, in=up] (1,6)
        to [out=south, in=up, looseness=0.6] (3.3,3)
        to [out=down, in=up, looseness=0.6] (1,0)
        to [out=right, in=left] (3,0)
        to [out=up, in=down, looseness=0.6] (0.7,3)
        to [out=up, in=down, looseness=0.6] (3,6)
        to [out=up, in=left] (3.5,6.5)
        to [out=right, in=up] (4,6)
        to [out=down, in=up, looseness=0.6] (1.5,3)
        to [out=down, in=up, looseness=0.6] (4,0)
        to [out=right, in=left] (4.3,0)
        to [out=right, in=right] (4.3,5.5)
        to [out=left, in=right] (-0.3,5.5)
        to [out=left, in=left] (-0.3,0)
        to [out=right, in=left] (0,0);
        \flipcrossings{3,4,6,7,8,11}
    \end{knot}
\end{tikzpicture}
}
\caption*{$9_{44}$}
\end{minipage}
\hfill
\begin{minipage}{0.24\textwidth}
\centering
\resizebox{\linewidth}{!}{
\begin{tikzpicture} 
\path[use as bounding box] (-3.6,0) rectangle (7.5,6.5);
    \begin{knot}[
            clip width=10,
            clip radius=8pt,
            consider self intersections=no splits,
            end tolerance=0.1pt
            ]
        \strand [thick] (0,0)
        to [out=up, in=south west, looseness=0.3] (3,1.5)
        to [out=north east, in=down, looseness=0.5] (3.9,4)
        to [out=north west,in=right, looseness=0.6] (1.2,2.9)
        to [out=left,in=down, looseness=1] (0.6,3.4)
        to [out=up, in=down, looseness=0.7] (3,6)
        to [out=up, in=left] (3.5,6.5)
        to [out=right, in=up] (4,6)
        to [out=down, in=left, looseness=0.8] (1.3,3.4)
        to [out=right, in=north west, looseness=0.7] (4.4,4.4)
        to [out=south east, in=north east, looseness=0.4] (3.7,1.4)
        to [out=south west, in=up, looseness=0.3] (1,0)
        to [out=right, in=left] (3,0)
        to [out=up, in=down, looseness=0.8] (1.5,1.6)
        to [out=up, in=down, looseness=0.5] (6.5,3)
        to [out=up, in=right, looseness=0.5] (5,3.5)
        to [out=left, in=down, looseness=0.5] (0,4.25)
        to [out=up, in=down, looseness=0.5] (0.7,4.625)
        to [out=up,in=down, looseness=0.5] (0,5)
        to [out=up, in=down] (0,6)
        to [out=up, in=left] (0.35,6.5)
        to [out=right, in=up] (0.7,6)
        to [out=down, in=up] (0.7,5)
        to [out=down, in=up, looseness=0.5] (0,4.625)
        to [out=down, in=up, looseness=0.5] (0.7,4.25)
        to [out=down, in=left, looseness=0.2] (5.2,3.9)
        to [out=right, in=up] (7,3.3)
        to [out=down, in=up] (7,3)
        to [out=down, in=right](5,2)
        to [out=left, in=up, looseness=0.4] (2.2,1.4)
        to [out=down, in=up, looseness=0.4] (4,0)
        to [out=right, in=left] (4.5,0)
        to [out=right, in=right] (4.5,5.5)
        to [out=left, in=right] (-0.5,5.5)
        to [out=left, in=left] (-0.5,0)
        to [out=right, in=left] (0,0);
        \flipcrossings{5,18,3,4,19,20,9,10,12,13,11,30,23,25,28,6,17}
    \end{knot}
\end{tikzpicture}
}
\caption*{$10_{88}$}
\end{minipage}
\hfill
\begin{minipage}{0.24\textwidth}
\centering
\resizebox{\linewidth}{!}{
\begin{tikzpicture} 
\path[use as bounding box] (-3.6,0) rectangle (7.5,6.5);
    \begin{knot}[
            clip width=10,
            clip radius=8pt,
            consider self intersections=no splits,
	 end tolerance=0.1pt
            ]
        \strand [thick] (0,0)
        to [out=up, in=down, looseness=0.6] (2.5,3)
        to [out=up,in=down, looseness=0.6] (0,6)
        to [out=up, in=left] (0.5,6.5)
        to [out=right, in=up] (1,6)
        to [out=south, in=up, looseness=0.6] (3.3,3)
        to [out=down, in=up, looseness=0.6] (1,0)
        to [out=right, in=left] (3,0)
        to [out=up, in=down, looseness=0.6] (0.7,3)
        to [out=up, in=down, looseness=0.6] (3,6)
        to [out=up, in=left] (3.5,6.5)
        to [out=right, in=up] (4,6)
        to [out=down, in=up, looseness=0.6] (1.5,3)
        to [out=down, in=up, looseness=0.6] (4,0)
        to [out=right, in=left] (4.3,0)
        to [out=right, in=right] (4.3,5.5)
        to [out=left, in=right] (-0.3,5.5)
        to [out=left, in=left] (-0.3,0)
        to [out=right, in=left] (0,0);
        \flipcrossings{3,4,6,7,5,11}
    \end{knot}
\end{tikzpicture}
}
\caption*{$10_{136}$}
\end{minipage}
\hfill
\begin{minipage}{0.24\textwidth}
\centering
\resizebox{\linewidth}{!}{
\begin{tikzpicture} 
\path[use as bounding box] (-3.6,0) rectangle (7.5,6.5);
    \begin{knot}[
            clip width=10,
            clip radius=8pt,
            consider self intersections=no splits,
	 end tolerance=0.1pt
            ]
        \strand [thick] (0,0)
        to [out=up, in=south east, looseness=0.75] (1.9,4.2)
        to [out=north west,in=up, looseness=0.6] (-2.5,2.5)
        to [out=down,in=down, looseness=0.8] (4,3)
        to [out=up, in=down] (3,6)
        to [out=up, in=left] (3.5,6.5)
        to [out=right, in=up] (4,6)
        to [out=down, in=up] (5,3)
        to [out=down, in=down, looseness=0.9] (-3.5,2.5)
        to [out=up, in=north west, looseness=0.7] (2.6,4.6)
        to [out=south east, in=up] (1,0)
        to [out=right, in=left] (3,0)
        to [out=up, in=down] (0,6)
        to [out=up, in=left] (0.5,6.5)
        to [out=right, in=up] (1,6)
        to [out=down, in=up] (4,0)
        to [out=right, in=left] (4.6,0)
        to [out=right, in=right] (4.6,5.5)
        to [out=left, in=right] (-0.5,5.5)
        to [out=left, in=left] (-0.5,0)
        to [out=right, in=left] (0,0);
        \flipcrossings{1,2,5,6,18,19,7,20,9,10,15,16,12,23}
    \end{knot}
\end{tikzpicture}
}
\caption*{$10_{156}$}
\end{minipage}

\begin{minipage}{0.24\textwidth}
\centering
\resizebox{\linewidth}{!}{
\begin{tikzpicture} 
\path[use as bounding box] (-3.6,0) rectangle (7.5,6.5);
    \begin{knot}[
            clip width=10,
            clip radius=8pt,
            consider self intersections=no splits,
	 end tolerance=0.1pt
            ]
        \strand [thick] (0,0)
        to [out=up, in=left, looseness=0.7] (1.3,2)
        to [out=right, in=left, looseness=0.6] (4.2,2.2)
        to [out=right,in=down, looseness=0.3] (6.5,2.5)
        to [out=up, in=right, looseness=0.3] (2,3)
        to [out=left, in=down, looseness=0.4] (-3.5,4)
        to [out=up, in=left, looseness=0.5] (0.56,4.46)
        to [out=right, in=down, looseness=0.8] (0.6,4.5)
        to [out=up, in=down] (0.6,6)
        to [out=up, in=left] (0.85,6.5)
        to [out=right, in=up] (1.1,6)
        to [out=down, in=up, looseness=0.6] (1.1,4.2)
        to [out=down, in=up, looseness=0.4] (-2.5,4)
        to [out=down, in=left, looseness=0.3] (2,3.5)
        to [out=right, in=up, looseness=0.5] (7.5,2.5)
        to [out=down, in=north east, looseness=0.6] (1.1,1.1)
        to [out=south west, in=up] (0.7,0)
        to [out=right, in=left] (3.2,0)
        to [out=up, in=right] (2.6,2.35)
        to [out=left, in=right, looseness=0.5] (1.7,0.3)
        to [out=left, in=south east, looseness=1] (-0.7,1.5)
        to [out=north west, in=south west, looseness=0.7] (-0.7,4.9)
        to [out=north east, in=left] (2,5)
        to [out=right, in=down] (3,6)
        to [out=up, in=left] (3.25,6.5)
        to [out=right, in=up] (3.5,6)
        to [out=down, in=right] (2,4.5)
        to [out=left, in=north east] (-0.2,4.8)
        to [out=south west, in=left] (1.4,0.7)
        to [out=right, in=left, looseness=0.8] (2.5,2.75)
        to [out=right, in=up] (4,0)
        to [out=right, in=left] (4.5,0)
        to [out=right, in=right] (4.5,5.5)
        to [out=left, in=right] (-0.5,5.5)
        to [out=left, in=left] (-0.5,0)
        to [out=right, in=left] (0,0);
        \flipcrossings{3,6,1,2,32,33,9,10,24,17,21,12,13,11,25,31,7,36,34,28,26,22}
    \end{knot}
\end{tikzpicture}
}
\caption*{$11a_{289}$}
\end{minipage}
\hfill
\begin{minipage}{0.24\textwidth}
\centering
\resizebox{\linewidth}{!}{
\begin{tikzpicture} 
\path[use as bounding box] (-3.6,0) rectangle (7.5,6.5);
    \begin{knot}[
            clip width=10,
            clip radius=8pt,
            consider self intersections=no splits,
	 end tolerance=0.1pt
            ]
        \strand [thick] (0,0)
        to [out=up, in=down, looseness=0.4] (3,1.6)
        to [out=up, in=down, looseness=0.4] (0,3.2)
        to [out=up, in=down, looseness=0.4] (3,4.8)
        to [out=up, in=down] (3,6)
        to [out=up, in=left] (3.5,6.5)
        to [out=right, in=up] (4,6)
        to [out=down, in=up] (4,4.8)
        to [out=down, in=up, looseness=0.4] (1,3.2)
        to [out=down, in=up, looseness=0.4] (4,1.6)
        to [out=down, in=up, looseness=0.4] (1,0)
        to [out=right, in=left] (3,0)
        to [out=up, in=down, looseness=0.4] (0,1.6)
        to [out=up, in=down, looseness=0.4] (3,3.2)
        to [out=up, in=down, looseness=0.4] (0,4.8)
        to [out=up, in=down] (0,6)
        to [out=up, in=left] (0.5,6.5)
        to [out=right, in=up] (1,6)
        to [out=down, in=up] (1,4.8)
        to [out=down, in=up, looseness=0.4] (4,3.2)
        to [out=down, in=up, looseness=0.4] (1,1.6)
        to [out=down, in=up, looseness=0.4] (4,0)
        to [out=right, in=right] (4.5,5.5)
        to [out=left, in=right] (-0.5,5.5)
        to [out=left, in=left] (-0.5,0)
        to [out=right, in=left] (0,0);
        \flipcrossings{3,4,11,12,8,16}
    \end{knot}
\end{tikzpicture}
}
\caption*{$11n_{84}$}
\end{minipage}
\hfill
\begin{minipage}{0.24\textwidth}
\centering
\resizebox{\linewidth}{!}{
\begin{tikzpicture} 
\path[use as bounding box] (-3.6,0) rectangle (7.5,6.5);
    \begin{knot}[
            clip width=10,
            clip radius=8pt,
            consider self intersections=no splits,
	 end tolerance=0.1pt
            ]
        \strand [thick] (0,0)
        to [out=up, in=left, looseness=0.8] (2,1.2)
        to [out=right, in=down, looseness=0.4] (4.4,1.5)
        to [out=up,in=down, looseness=0.4] (4.4,4.3)
        to [out=up,in=right, looseness=0.4] (2,4.3)
        to [out=left,in=down, looseness=1] (0,6)
        to [out=up, in=left] (0.5,6.5)
        to [out=right, in=up] (1,6)
        to [out=down, in=left] (2,5)
        to [out=right, in=up, looseness=1] (5,4.3)
        to [out=down, in=up, looseness=0.4] (5,1.5)
        to [out=down, in=right, looseness=0.7] (2,0.5)
        to [out=left, in=up, looseness=0.7] (1,0)
        to [out=right, in=left] (3,0)
        to [out=up, in=left] (3.3,2.7)
        to [out=right, in=down] (7,3.5)
        to [out=up, in=right, looseness=0.5] (3.8,3.7)
        to [out=left, in=down] (4,6)
        to [out=up, in=right] (3.5,6.5)
        to [out=left, in=up] (3,6)
        to [out=down, in=left] (3.3,3.2)
        to [out=right, in=up] (7,2.5)
        to [out=down, in=right, looseness=0.5] (3.8,2.2)
        to [out=left, in=up] (4,0)
        to [out=right, in=left] (4.6,0)
        to [out=right, in=right] (4.6,5.5)
        to [out=left, in=right] (-0.5,5.5)
        to [out=left, in=left] (-0.5,0)
        to [out=right, in=left] (0,0);
        \flipcrossings{1, 2, 17, 18, 7, 8, 11, 12, 10, 23, 20, 25, 19}
    \end{knot}
\end{tikzpicture}
}
\caption*{$11n_{125}$}
\end{minipage}
\hfill
\begin{minipage}{0.24\textwidth}
\centering
\resizebox{\linewidth}{!}{
\begin{tikzpicture} 
\path[use as bounding box] (-3.6,0) rectangle (7.5,6.5);
    \begin{knot}[
            clip width=10,
            clip radius=8pt,
            consider self intersections=no splits,
	 end tolerance=0.1pt
            ]
        \strand [thick] (2,0)
        to [out=right, in=down] (6,2)
        to [out=up, in=right] (5,4)
        to [out=left, in=up] (4.5,2)
        to [out=down, in=right] (4,0.5)
        to [out=left, in=down] (3.5,2)
        to [out=up, in=down] (3.5,5.5)
        to [out=up, in=right] (3,6)
        to [out=left, in=up] (2.5,5.5)
        to [out=down, in=up] (2.5,2)
        to [out=down, in=right] (2,0.5)
        to [out=left, in=down] (0,2)
        to [out=up, in=left] (2,5)
        to [out=right, in=left] (3.7,5)
        to [out=right, in=up] (4.1,4.5)
        to [out=down, in=right] (3.7,4)
        to [out=left, in=right] (2,4)
        to [out=left, in=up] (1,2)
        to [out=down, in=left] (3,1.5)
        to [out=right, in=left] (4.7,1.5)
        to [out=right, in=down] (5,2)
        to [out=up, in=right] (4.7,2.5)
        to [out=left, in=right] (2,2.5)
        to [out=left, in=left] (2,0);
        \flipcrossings{1,3,5,7,9,11}
    \end{knot}
    \begin{knot}[
            draft mode=strands,
            clip width=10,
            clip radius=8pt,
            consider self intersections=false,
	 end tolerance=0.1pt
            ]
        \strand[thick,red](1.1,0.55)
        to [out=up, in=left] (1.4,0.85)
        to [out=right, in=up] (1.7,0.55)
        to [out=down, in=right] (1.4,0.25)
        to [out=left, in=down] (1.1,0.55);
    \end{knot}
    \begin{knot}[
            draft mode=strands,
            clip width=10,
            clip radius=8pt,
            consider self intersections=false,
	 end tolerance=0.1pt
            ]
        \strand[thick,red](2.2,2.5)
        to [out=up, in=left] (2.5,2.8)
        to [out=right, in=up] (2.8,2.5)
        to [out=down, in=right] (2.5,2.2)
        to [out=left, in=down] (2.2,2.5);
    \end{knot}
\end{tikzpicture}
}
\caption*{$12a_{1008}$}
\end{minipage}

\begin{minipage}{0.24\textwidth}
\centering
\resizebox{\linewidth}{!}{
\begin{tikzpicture} 
\path[use as bounding box] (-3.6,0) rectangle (7.5,6.5);
    \begin{knot}[
            clip width=10,
            clip radius=8pt,
            consider self intersections=no splits,
	 end tolerance=0.1pt
            ]
        \strand [thick] (3,0)
        to [out=left, in=down] (2,1)
        to [out=up, in=down] (2,3.5)
        to [out=up, in=right] (1.5,4)
        to [out=left, in=right] (-0.5,4)
        to [out=left, in=down] (-1,4.5)
        to [out=up, in=left] (-0.5,5)
        to [out=right, in=left] (4.5,5)
        to [out=right, in=up] (5,3.5)
        to [out=down, in=right] (4.5,2)
        to [out=left, in=right] (1.5,2)
        to [out=left, in=left] (1.5,5.5)
        to [out=right, in=left] (5,5.5)
        to [out=right, in=up] (6,3.5)
        to [out=down, in=right] (5,1.5)
        to [out=left, in=right] (3,1.5)
        to [out=left, in=right] (2.5,3)
        to [out=left, in=right] (0.5,3)
        to [out=left, in=left] (0.5,6)
        to [out=right, in=left] (3,6)
        to [out=right, in=up] (4,3.5)
        to [out=down, in=up] (4,1)
        to [out=down, in=right] (3,0);
        \flipcrossings{2,4,5,9,11}
    \end{knot}
    \begin{knot}[
            draft mode=strands,
            clip width=10,
            clip radius=8pt,
            consider self intersections=false,
	        end tolerance=0.1pt
            ]
        \strand[thick,red](3.7,1.5)
        to [out=up, in=left] (4,1.8)
        to [out=right, in=up] (4.3,1.5)
        to [out=down, in=right] (4,1.2)
        to [out=left, in=down] (3.7,1.5);
    \end{knot}
    \begin{knot}[
            draft mode=strands,
            clip width=10,
            clip radius=8pt,
            consider self intersections=false,
	        end tolerance=0.1pt
            ]
        \strand[thick,red](0.2,4)
        to [out=up, in=left] (0.5,4.3)
        to [out=right, in=up] (0.8,4)
        to [out=down, in=right] (0.5,3.7)
        to [out=left, in=down] (0.2,4);
    \end{knot}
\end{tikzpicture}
}
\caption*{$12a_{1249}$}
\end{minipage}
\hfill
\begin{minipage}{0.24\textwidth}
\centering
\resizebox{\linewidth}{!}{
\begin{tikzpicture} 
\path[use as bounding box] (-3.6,0) rectangle (7.5,6.5);
    \begin{knot}[
            clip width=10,
            clip radius=8pt,
            consider self intersections=no splits,
	 end tolerance=0.1pt
            ]
        \strand [thick] (0,0)
        to [out=up, in=down, looseness=0.4] (3,1.6)
        to [out=up, in=down, looseness=0.4] (0,3.2)
        to [out=up, in=down, looseness=0.4] (3,4.8)
        to [out=up, in=down] (3,6)
        to [out=up, in=left] (3.5,6.5)
        to [out=right, in=up] (4,6)
        to [out=down, in=up] (4,4.8)
        to [out=down, in=up, looseness=0.4] (1,3.2)
        to [out=down, in=up, looseness=0.4] (4,1.6)
        to [out=down, in=up, looseness=0.4] (1,0)
        to [out=right, in=left] (3,0)
        to [out=up, in=down, looseness=0.4] (0,1.6)
        to [out=up, in=down, looseness=0.4] (3,3.2)
        to [out=up, in=down, looseness=0.4] (0,4.8)
        to [out=up, in=down] (0,6)
        to [out=up, in=left] (0.5,6.5)
        to [out=right, in=up] (1,6)
        to [out=down, in=up] (1,4.8)
        to [out=down, in=up, looseness=0.4] (4,3.2)
        to [out=down, in=up, looseness=0.4] (1,1.6)
        to [out=down, in=up, looseness=0.4] (4,0)
        to [out=right, in=right] (4.5,5.5)
        to [out=left, in=right] (-0.5,5.5)
        to [out=left, in=left] (-0.5,0)
        to [out=right, in=left] (0,0);
        \flipcrossings{3,4,11,12,8,15}
    \end{knot}
\end{tikzpicture}
}
\caption*{$12n_{275}$}
\end{minipage}
\hfill
\begin{minipage}{0.24\textwidth}
\centering
\resizebox{\linewidth}{!}{
\begin{tikzpicture} 
\path[use as bounding box] (-3.6,0) rectangle (7.5,6.5);
   \begin{knot}[
            clip width=10,
            clip radius=8pt,
            consider self intersections=no splits,
            end tolerance=0.1pt
            ]
        \strand [thick] (0,0)
        to [out=up, in=south west, looseness=0.3] (3,1.5)
        to [out=north east, in=down, looseness=0.5] (3.9,4)
        to [out=north west,in=right, looseness=0.6] (1.2,2.9)
        to [out=left,in=down, looseness=1] (0.6,3.4)
        to [out=up, in=down, looseness=0.7] (3,6)
        to [out=up, in=left] (3.5,6.5)
        to [out=right, in=up] (4,6)
        to [out=down, in=left, looseness=0.8] (1.3,3.4)
        to [out=right, in=north west, looseness=0.7] (4.4,4.4)
        to [out=south east, in=north east, looseness=0.4] (3.7,1.4)
        to [out=south west, in=up, looseness=0.3] (1,0)
        to [out=right, in=left] (3,0)
        to [out=up, in=down, looseness=0.8] (1.5,1.6)
        to [out=up, in=down, looseness=0.5] (6.5,3)
        to [out=up, in=right, looseness=0.5] (5,3.5)
        to [out=left, in=down, looseness=0.5] (0,4.25)
        to [out=up, in=down, looseness=0.5] (0.7,4.625)
        to [out=up,in=down, looseness=0.5] (0,5)
        to [out=up, in=down] (0,6)
        to [out=up, in=left] (0.35,6.5)
        to [out=right, in=up] (0.7,6)
        to [out=down, in=up] (0.7,5)
        to [out=down, in=up, looseness=0.5] (0,4.625)
        to [out=down, in=up, looseness=0.5] (0.7,4.25)
        to [out=down, in=left, looseness=0.2] (5.2,3.9)
        to [out=right, in=up] (7,3.3)
        to [out=down, in=up] (7,3)
        to [out=down, in=right](5,2)
        to [out=left, in=up, looseness=0.4] (2.2,1.4)
        to [out=down, in=up, looseness=0.4] (4,0)
        to [out=right, in=left] (4.5,0)
        to [out=right, in=right] (4.5,5.5)
        to [out=left, in=right] (-0.5,5.5)
        to [out=left, in=left] (-0.5,0)
        to [out=right, in=left] (0,0);
        \flipcrossings{5,18,3,4,19,20,9,10,12,13,11,30,23,25,27,6,17}
    \end{knot}
\end{tikzpicture}
}
\caption*{$12n_{392}$}
\end{minipage}
\hfill
\begin{minipage}{0.24\textwidth}
\centering
\resizebox{\linewidth}{!}{
\begin{tikzpicture} 
\path[use as bounding box] (-3.6,0) rectangle (7.5,6.5);
    \begin{knot}[
            clip width=10,
            clip radius=8pt,
            consider self intersections=no splits,
	 end tolerance=0.1pt
            ]
        \strand [thick] (0,0)
        to [out=up,in=down] (0,6)
        to [out=up, in=left] (0.5,6.5)
        to [out=right, in=up] (1,6)
        to [out=down, in=up] (1,0)
        to [out=right, in=left] (3,0)
        to [out=up, in=right] (2,2.5)
        to [out=left, in=up, looseness=0.4] (-0.5,2.25)
        to [out=down, in=left, looseness=0.4] (2,2)
        to [out=right, in=down] (3,4)
        to [out=up, in=down, looseness=1.5] (4,6)
        to [out=up, in=right] (3.5,6.5)
        to [out=left, in=up] (3,6)
        to [out=down, in=up, looseness=1.5] (4,4)
        to [out=down, in=right] (2,1)
        to [out=left, in=left, looseness=5] (2,3.5)
        to [out=right, in=up] (4,0)
        to [out=right, in=left] (4.5,0)
        to [out=right, in=right] (4.5,5.5)
        to [out=left, in=right] (-0.5,5.5)
        to [out=left, in=left] (-0.5,0)
        to [out=right, in=left] (0,0);
        \flipcrossings{11, 12, 1, 6, 4, 9, 10, 14, 16}
    \end{knot}
\end{tikzpicture}
}
\caption*{$12n_{464}$}
\end{minipage}
\hfill
\begin{minipage}{0.24\textwidth}
\centering
\resizebox{\linewidth}{!}{
\begin{tikzpicture} 
\path[use as bounding box] (-3.6,0) rectangle (7.5,6.5);
    \begin{knot}[
            clip width=10,
            clip radius=8pt,
            consider self intersections=no splits,
	 end tolerance=0.1pt
            ]
        \strand [thick] (0,0)
        to [out=up, in=left, looseness=0.8] (2,1.2)
        to [out=right, in=down, looseness=0.4] (4.4,1.5)
        to [out=up,in=down, looseness=0.4] (4.4,4.3)
        to [out=up,in=right, looseness=0.4] (2,4.3)
        to [out=left,in=down, looseness=1] (0,6)
        to [out=up, in=left] (0.5,6.5)
        to [out=right, in=up] (1,6)
        to [out=down, in=left] (2,5)
        to [out=right, in=up, looseness=1] (5,4.3)
        to [out=down, in=up, looseness=0.4] (5,1.5)
        to [out=down, in=right, looseness=0.7] (2,0.5)
        to [out=left, in=up, looseness=0.7] (1,0)
        to [out=right, in=left] (3,0)
        to [out=up, in=left] (3.3,2.7)
        to [out=right, in=down] (7,3.5)
        to [out=up, in=right, looseness=0.5] (3.8,3.7)
        to [out=left, in=down] (4,6)
        to [out=up, in=right] (3.5,6.5)
        to [out=left, in=up] (3,6)
        to [out=down, in=left] (3.3,3.2)
        to [out=right, in=up] (7,2.5)
        to [out=down, in=right, looseness=0.5] (3.8,2.2)
        to [out=left, in=up] (4,0)
        to [out=right, in=left] (4.6,0)
        to [out=right, in=right] (4.6,5.5)
        to [out=left, in=right] (-0.5,5.5)
        to [out=left, in=left] (-0.5,0)
        to [out=right, in=left] (0,0);
        \flipcrossings{1, 2, 17, 18, 7, 8, 11, 12, 9, 23, 20, 25, 19}
    \end{knot}
\end{tikzpicture}
}
\caption*{$12n_{482}$}
\end{minipage}
\hfill
\begin{minipage}{0.24\textwidth}
\centering
\resizebox{\linewidth}{!}{
\begin{tikzpicture} 
\path[use as bounding box] (-3.6,0) rectangle (7.5,6.5);
    \begin{knot}[
            clip width=10,
            clip radius=8pt,
            consider self intersections=no splits,
	 end tolerance=0.1pt
            ]
       \strand [thick] (0,0)
        to [out=up,in=down] (0,6)
        to [out=up, in=left] (0.5,6.5)
        to [out=right, in=up] (1,6)
        to [out=down, in=up] (1,0)
        to [out=right, in=left] (3,0)
        to [out=up, in=right] (2,2.5)
        to [out=left, in=up, looseness=0.4] (-0.5,2.25)
        to [out=down, in=left, looseness=0.4] (2,2)
        to [out=right, in=down] (3,4)
        to [out=up, in=down, looseness=0.5] (4,4.5)
        to [out=up, in=down, looseness=0.5] (3,5)
        to [out=up, in=down, looseness=1] (3,6)
        to [out=up, in=left] (3.5,6.5)
        to [out=right, in=up] (4,6)
        to [out=down, in=up, looseness=1] (4,5)
        to [out=down, in=up, looseness=0.5] (3,4.5)
        to [out=down, in=up, looseness=0.5] (4,4)
        to [out=down, in=right] (2,1)
        to [out=left, in=left, looseness=5] (2,3.5)
        to [out=right, in=up] (4,0)
        to [out=right, in=left] (4.5,0)
        to [out=right, in=right] (4.5,5.5)
        to [out=left, in=right] (-0.5,5.5)
        to [out=left, in=left] (-0.5,0)
        to [out=right, in=left] (0,0);
        \flipcrossings{11, 12, 1, 6, 4, 9, 5, 14, 16}
    \end{knot}
\end{tikzpicture}
}
\caption*{$12n_{483}$}
\end{minipage}
\hfill
\begin{minipage}{0.24\textwidth}
\centering
\resizebox{\linewidth}{!}{
\begin{tikzpicture} 
\path[use as bounding box] (-3.6,0) rectangle (7.5,6.5);
    \begin{knot}[
            clip width=10,
            clip radius=8pt,
            consider self intersections=no splits,
	 end tolerance=0.1pt
            ]
        \strand [thick] (0,0)
        to [out=up,in=down] (0,6)
        to [out=up, in=left] (0.5,6.5)
        to [out=right, in=up] (1,6)
        to [out=down, in=up] (1,0)
        to [out=right, in=left] (3,0)
        to [out=up, in=right] (2,2.5)
        to [out=left, in=up, looseness=0.4] (-0.5,2.25)
        to [out=down, in=left, looseness=0.4] (2,2)
        to [out=right, in=down] (3,4)
        to [out=up, in=down, looseness=1.5] (4,6)
        to [out=up, in=right] (3.5,6.5)
        to [out=left, in=up] (3,6)
        to [out=down, in=up, looseness=1.5] (4,4)
        to [out=down, in=right] (2,1)
        to [out=left, in=left, looseness=5] (2,3.5)
        to [out=right, in=up] (4,0)
        to [out=right, in=left] (4.5,0)
        to [out=right, in=right] (4.5,5.5)
        to [out=left, in=right] (-0.5,5.5)
        to [out=left, in=left] (-0.5,0)
        to [out=right, in=left] (0,0);
        \flipcrossings{11, 12, 1, 6, 4, 9, 5, 14, 16}
    \end{knot}
\end{tikzpicture}
}
\caption*{$12n_{650}$}
\end{minipage}
\hfill
\begin{minipage}{0.24\textwidth}
\centering
\resizebox{\linewidth}{!}{
\begin{tikzpicture} 
\path[use as bounding box] (-3.6,0) rectangle (7.5,6.5);
    \begin{knot}[
            clip width=10,
            clip radius=8pt,
            consider self intersections=no splits,
	 end tolerance=0.1pt
            ]
        \strand [thick] (1.5,0)
        to [out=right, in=down] (2,0.5)
        to [out=up, in=down] (2,2.5)
        to [out=up, in=left] (2.5,3)
        to [out=right, in=up] (3,2.5)
        to [out=down, in=up] (3,0.5)
        to [out=down, in=left] (4.5,0)
        to [out=right, in=down] (6,0.5)
        to [out=up, in=down] (6,5)
        to [out=up, in=right] (3,6)
        to [out=left, in=up] (0,5)
        to [out=down, in=up] (0,1.5)
        to [out=down, in=left] (0.5,1)
        to [out=right, in=left] (5,1)
        to [out=right, in=down] (7,2.5)
        to [out=up, in=right] (5,4)
        to [out=left, in=right] (-0.5,4)
        to [out=left, in=up] (-1,3)
        to [out=down, in=left] (-0.5,2)
        to [out=right, in=left] (3.5,2)
        to [out=right, in=down] (4,3)
        to [out=up, in=right] (2.5,5)
        to [out=left, in=up] (1,3)
        to [out=down, in=up] (1,0.5)
        to [out=down, in=left] (1.5,0);
        \flipcrossings{2,5,8,10,12}
    \end{knot}
    \begin{knot}[
            draft mode=strands,
            clip width=10,
            clip radius=8pt,
            consider self intersections=false,
	        end tolerance=0.1pt
            ]
        \strand[thick,red](1.7,2)
        to [out=up, in=left] (2,2.3)
        to [out=right, in=up] (2.3,2)
        to [out=down, in=right] (2,1.7)
        to [out=left, in=down] (1.7,2);
    \end{knot}
    \begin{knot}[
            draft mode=strands,
            clip width=10,
            clip radius=8pt,
            consider self intersections=false,
	        end tolerance=0.1pt
            ]
        \strand[thick,red](0.9,4)
        to [out=up, in=left] (1.2,4.3)
        to [out=right, in=up] (1.5,4)
        to [out=down, in=right] (1.2,3.7)
        to [out=left, in=down] (0.9,4);
    \end{knot}
\end{tikzpicture}
}
\caption*{$12n_{831}$}
\end{minipage}

\end{centering}

\captionof{figure}{$2$-adjacent knots with $\leq12$ crossings. For each finger-move diagram, one crossing from each hook makes up the $2$-adjacency set. Those knots for which a finger-move diagram was unable to be found are still included, and their $2$-adjacency set is indicated with circles.}
\label{fig:2adj-knots}

\end{figure}

{\begin{figure}
    \centering
    \includegraphics[width=.5\linewidth]{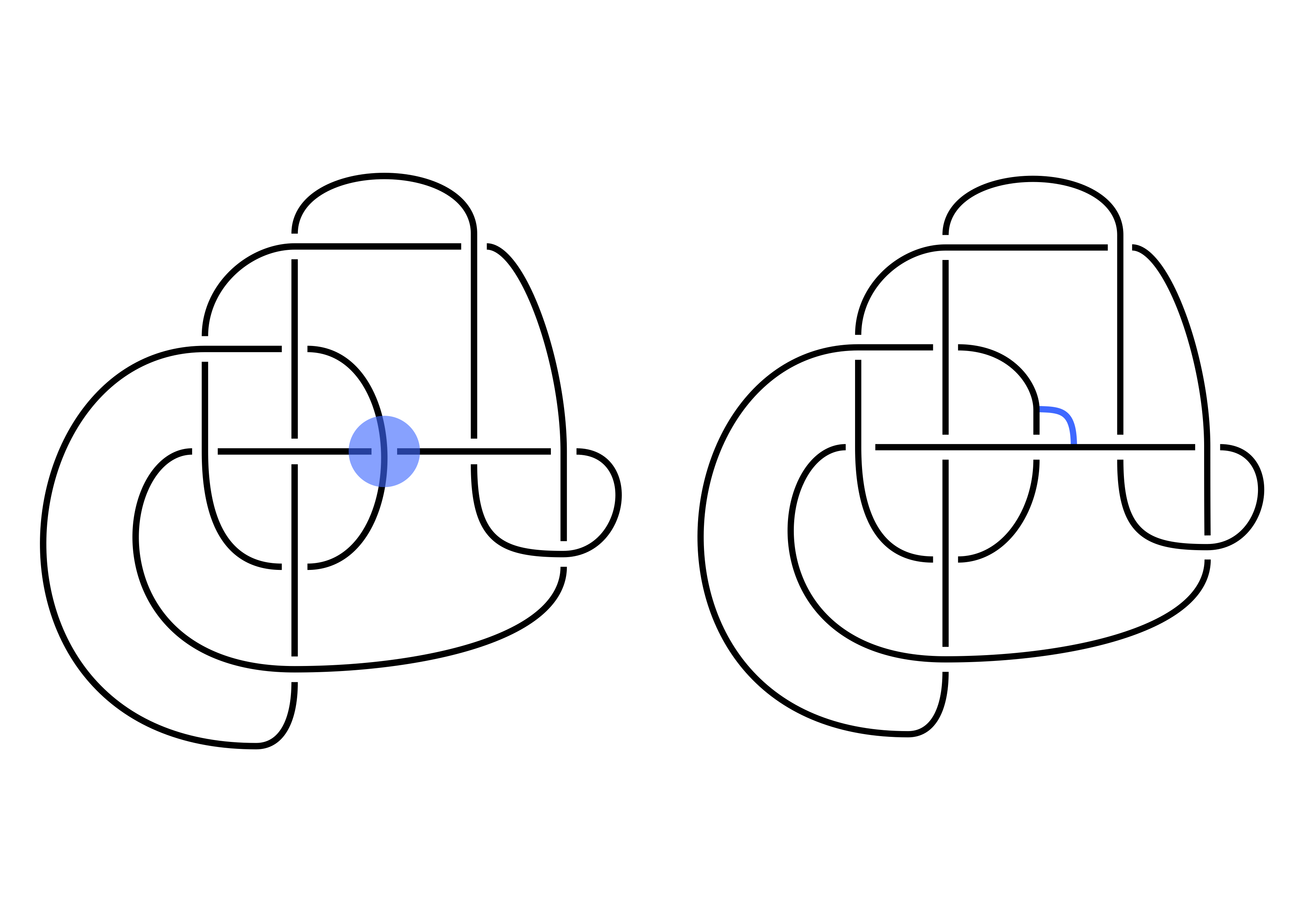} \includegraphics[width=.45\linewidth]{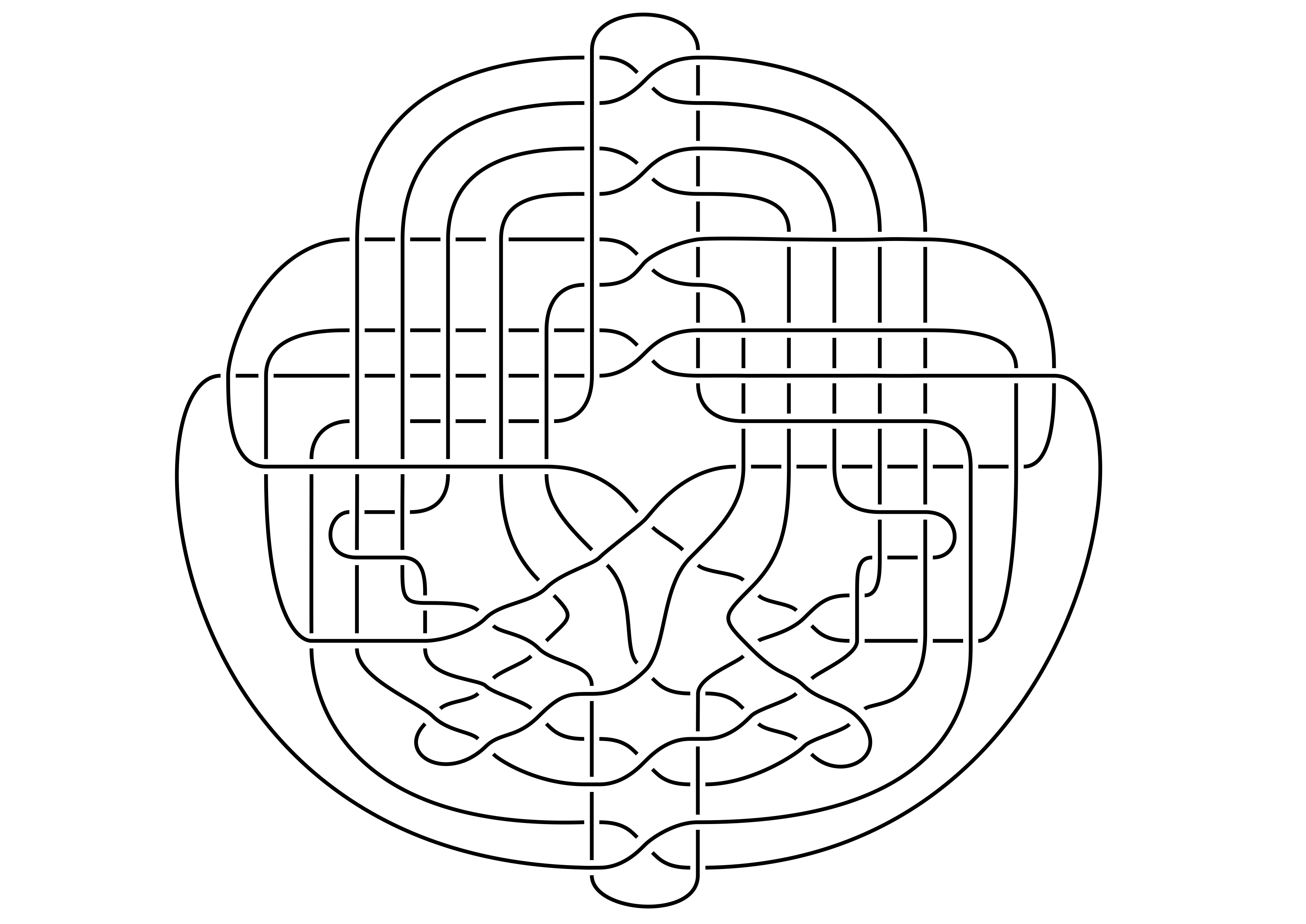}
    \includegraphics[width=.5\linewidth]{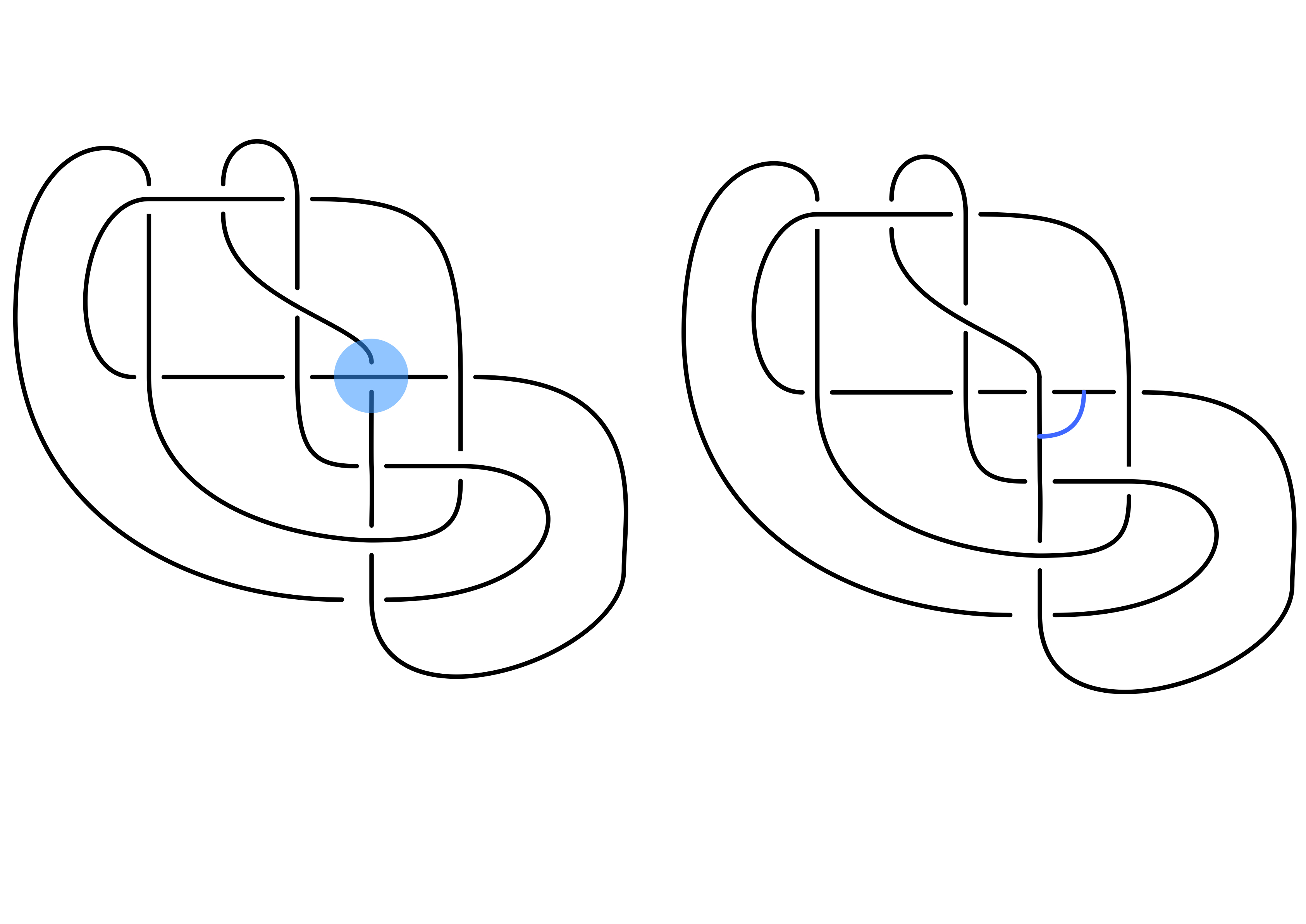} \includegraphics[width=.45\linewidth]{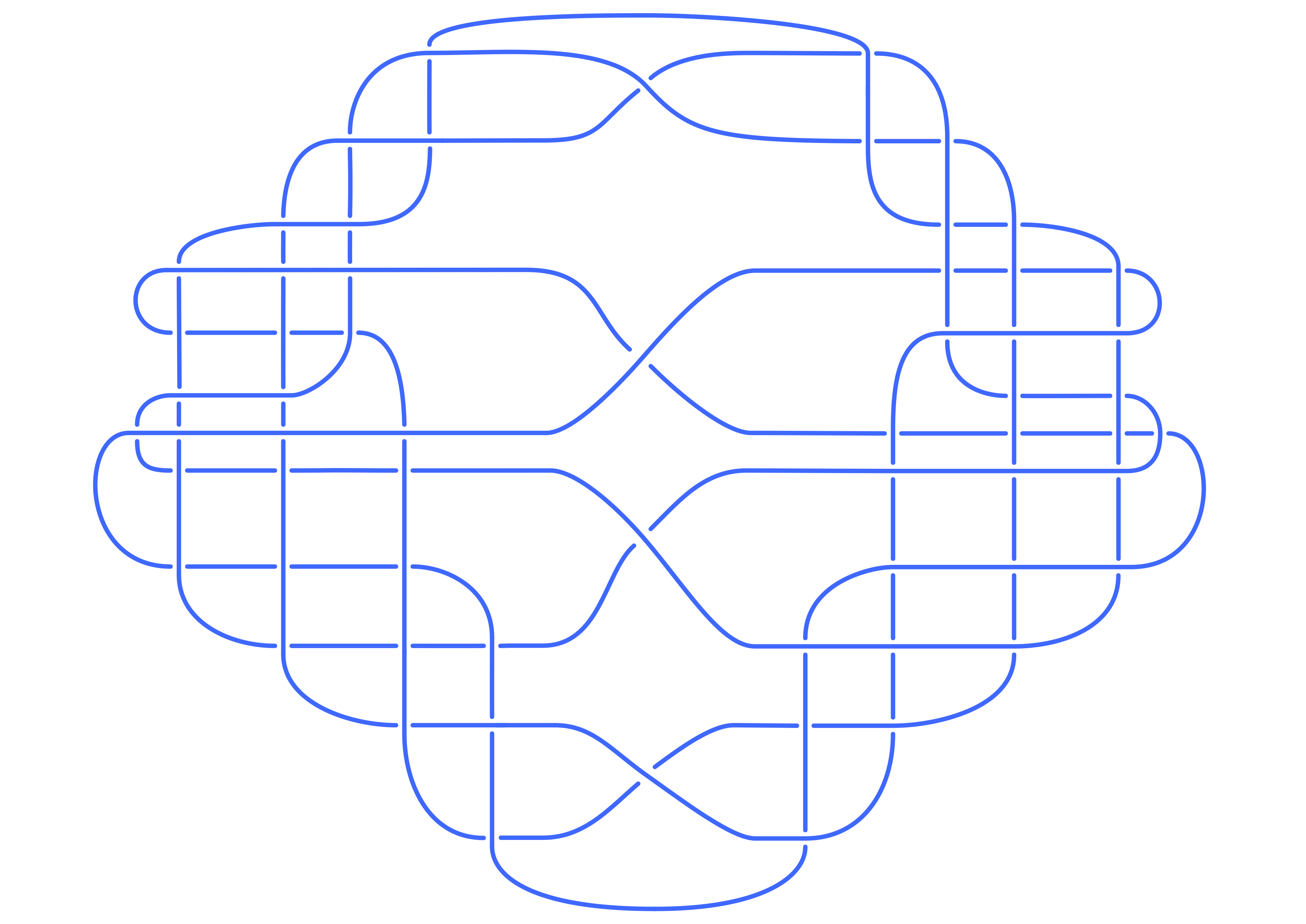}
    \includegraphics[width=.5\linewidth]{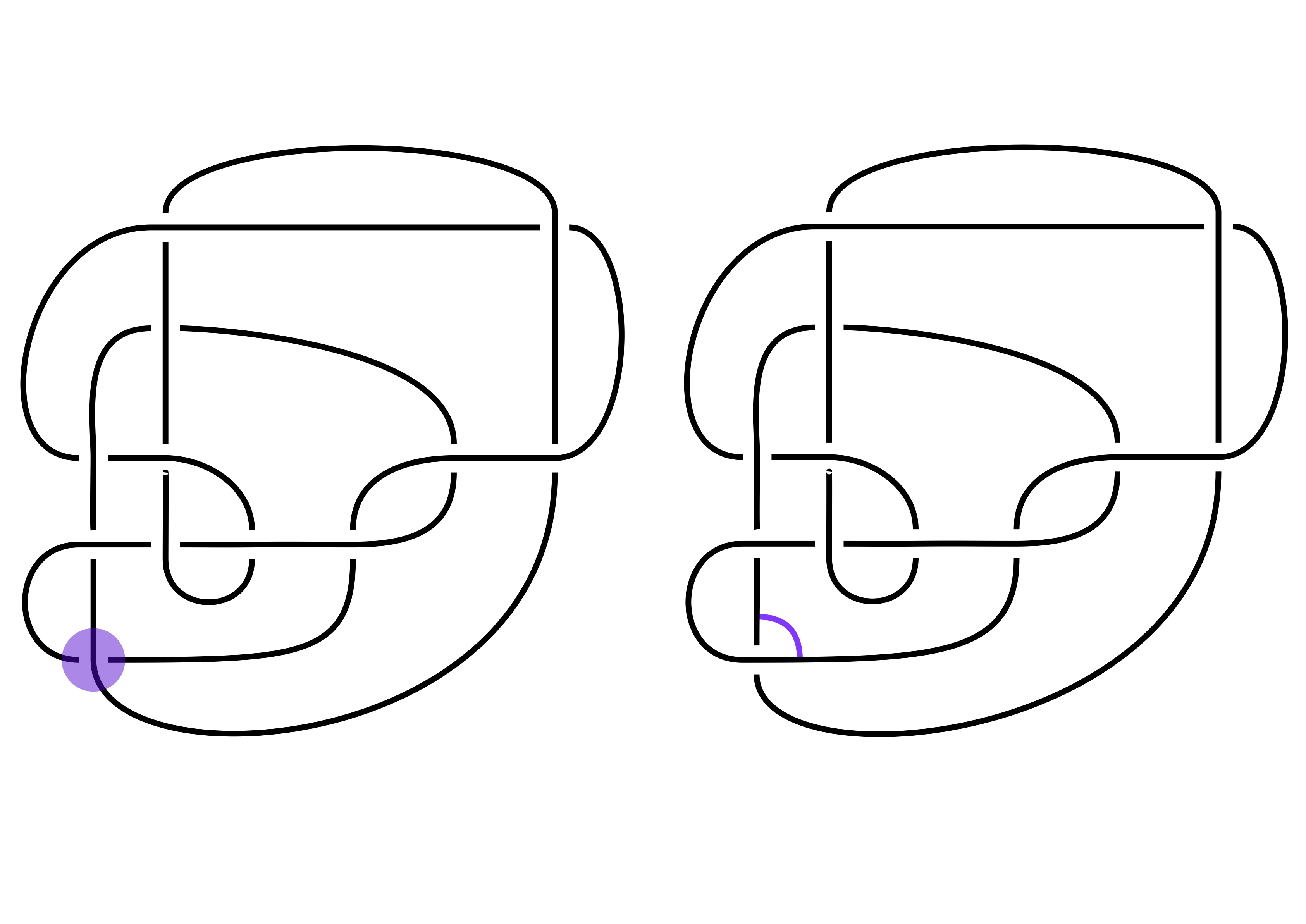} \includegraphics[width=.45\linewidth]{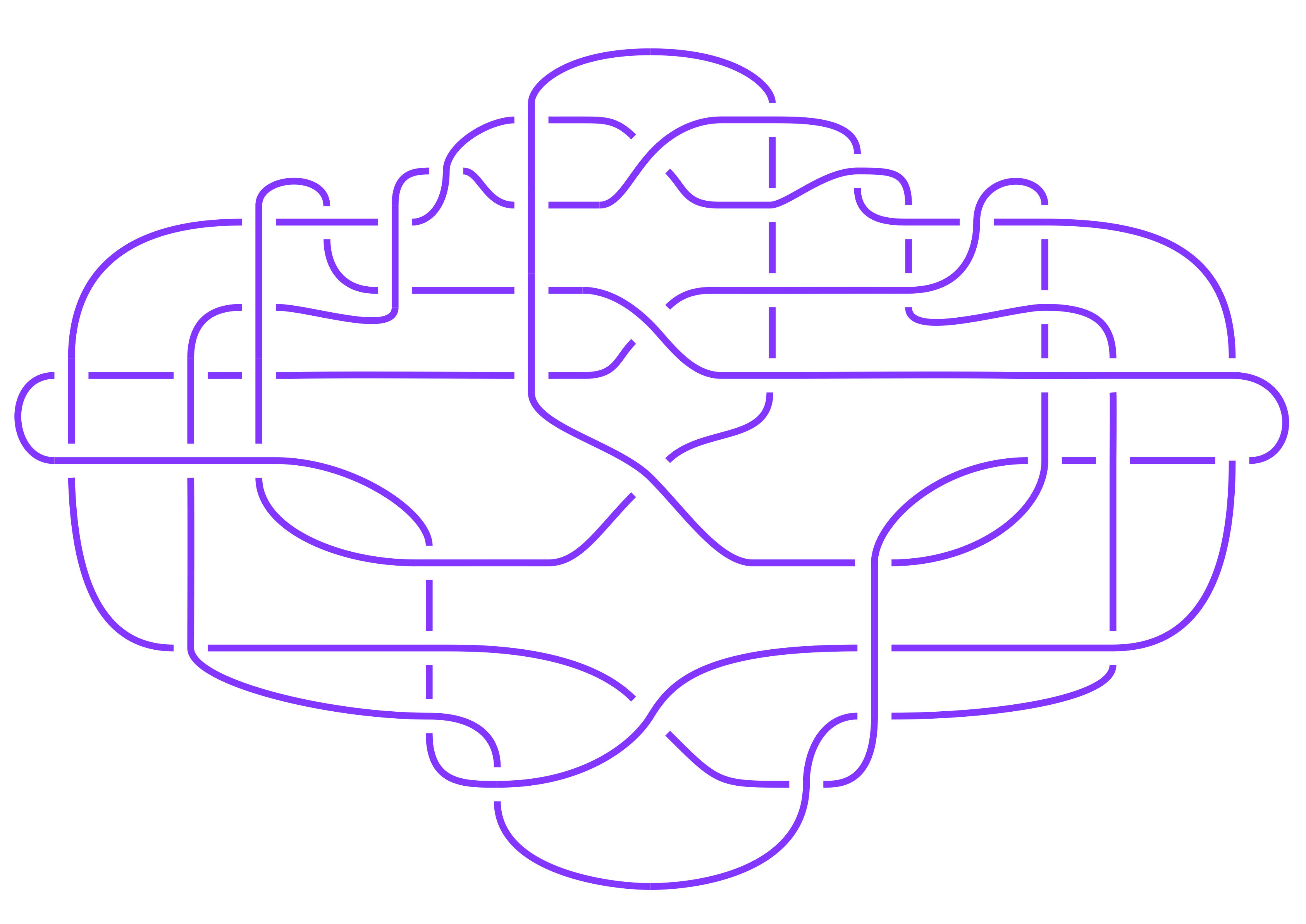}
    \caption{Finding the lifted arc $J$ for the knots $12a_{358}$, $12n_{620}$, and $12n_{656}$. The diagrams on the left shows the knot with an unknotting crossing circled. In the middle diagrams, we have changed the unknotting crossing and added an arc which lifts to the figures on the right.}
    \label{fig.12a_358-crossing}
\end{figure}}

{\begin{table} 
        \centering
        \begin{tabular}{|l|r|r|r|} \hline 
            Knot $K$ & Signature & Conway polynomial  & Determinant \\ \hline 
            $13n_{589}$ & $-2$ & $z^6+1$ & $63$ \\ \hline 
            $13n_{1179}$ & $0$ & $z^8+2z^6+z^4+1$ & $145$ \\ \hline 
            $13n_{1202}$ & $0$ & $z^8+2z^6+z^4+1$ & $145$ \\ \hline 
            $13n_{1486}$ & $0$ & $z^8+3z^6+2z^4-z^2+1$ & $101$ \\ \hline 
            $13n_{1822}$ & $-2$ & $-z^6-4z^4+z^2+1$ & $3$ \\ \hline 
            $13n_{2073}$ & $2$ & $z^6 + 1$ & $63$ \\ \hline 
            $13n_{2278}$ & $2$ & $-2z^4 + z^2 + 1$ & $35$ \\ \hline 
            $13n_{2337}$ & $-2$ & $-z^4 + 1$ & $15$ \\ \hline 
            $13n_{2693}$ & $-2$ & $-z^8 - 2z^6 - z^4 + 1$ & $143$ \\ \hline 
            $13n_{2724}$ & $2$ & $2z^6 - z^4 + 1$ & $143$ \\ \hline 
            $13n_{3017}$ & $-2$ & $-z^4 + 1$ & $15$ \\ \hline 
            $13n_{3416}$ & $0$ & $z^4 + 1$ & $17$ \\ \hline 
            $13n_{4913}$ & $-2$ & $-z^8 - 2z^6 - z^4 + 1$ & $143$ \\ \hline 
        \end{tabular}
        \caption{Table of known $2$-adjacent knots with $13$ crossings. Each has an easily found $2$-adjacency set.}
        \label{tab:13crossing2adjknots}
    \end{table}}

    \begin{figure} 
    \flushleft
        $\left\{13a_{1328}, 13a_{2671}, 13a_{3150}, 13a_{3634}, 13n_{137}, 13n_{167}, 13n_{179}, 13n_{372}, 13n_{375}, 13n_{422}, 13n_{423}, 13n_{630}, \right.$ \\ $\left. \quad 13n_{904}, 13n_{940}, 13n_{1509}, 13n_{1513}, 13n_{1572}, 13n_{1690}, 13n_{1861}, 13n_{1923}, 13n_{2012}, 13n_{2057}, 13n_{2085}, \right.$ \\ $\left. \quad 13n_{2251}, 13n_{2426}, 13n_{2427}, 13n_{2522}, 13n_{2696}, 13n_{2734}, 13n_{2792}, 13n_{2828}, 13n_{2834}, 13n_{2865}, 13n_{2868}, \right.$ \\ $\left. \quad 13n_{2889}, 13n_{2933}, 13n_{2956}, 13n_{2997}, 13n_{3072}, 13n_{3218}, 13n_{3299}, 13n_{3485}, 13n_{3563}, 13n_{3574}, 13n_{3669}, \right.$ \\ $\left. \quad 13n_{3796}, 13n_{3888}, 13n_{3895}, 13n_{3950}, 13n_{3984}, 13n_{4111}, 13n_{4113}, 13n_{4342}, 13n_{4390}, 13n_{4414}, 13n_{4420}, \right.$ \\ $\left. \quad 13n_{4430}, 13n_{4465}, 13n_{4523}, 13n_{4542}, 13n_{4582}, 13n_{4591}, 13n_{4650}, 13n_{4729}, 13n_{4765}, 13n_{4788}, 13n_{4874}, \right.$ \\ $\left. \quad 13n_{4914}, 13n_{4955}, 13n_{4957}, 13n_{4982}, 13n_{5062}, 13n_{5070}, 13n_{5084}\right\}$
        \caption{List of $13$ crossing knots whose $2$-adjacency status is not resolved by Theorem \ref{thm-newobst}, Proposition \ref{prop-basic-obstructions}, Proposition \ref{tao5.4}, Proposition \ref{tao5.2}, or Proposition \ref{taoconway}. The method described in Section \ref{rulingoutalexpolys} was not used for this list.}
        \label{13crossingunresolvedknots}
    \end{figure} 

\noindent

\clearpage
\bibliographystyle{alpha}
\bibliography{references}

\end{document}